\documentclass[14pt]{amsart}
\usepackage{cases}
\usepackage{amsmath}
\usepackage{amsfonts}
\usepackage{bm}
\usepackage{amsfonts,amsmath,amssymb,amscd,bbm,amsthm,mathrsfs,dsfont}
\usepackage{mathrsfs}
\usepackage{pb-diagram}
\usepackage{color}
\usepackage{amssymb}
\usepackage{xypic}
\usepackage{mathrsfs}
\usepackage{pifont}
\usepackage[all]{xy}
\usepackage{breqn}
\usepackage{graphicx}
\usepackage{epstopdf}
\usepackage{float}

\newtheorem{Theorem}{Theorem}[section]
\newtheorem{Lemma}[Theorem]{Lemma}
\newtheorem{Definition}[Theorem]{Definition}
\newtheorem{Corollary}[Theorem]{Corollary}
\newtheorem{Proposition}[Theorem]{Proposition}

\newtheorem{Remark}[Theorem]{Remark}

\newtheorem{Observation}[Theorem]{Observation}

\newcommand{\lra}{\longrightarrow}

\newcommand{\ra}{\rightarrow}
\newcommand{\sdp}{\times\kern-.2em\vrule height1.1ex depth-.05ex}
\newcommand{\epi}{\lra \kern-.8em\ra}
\newcommand{\A}{{\mathcal A}}
\newcommand{\C}{{\mathbb C}}

\newcommand{\Z}{{\mathbb Z}}

\newcommand{\Rnum}[1]{\uppercase\expandafter{\romannumeral #1\relax}}

\setlength{\textwidth}{15.3cm} \setlength{\textheight}{23cm}
\setlength{\topmargin}{-0.0cm} \setlength{\oddsidemargin}{-1mm}
\setlength{\evensidemargin}{-1mm} \setlength{\abovedisplayskip}{3mm}
\setlength{\belowdisplayskip}{3mm}
\setlength{\abovedisplayshortskip}{0mm}
\setlength{\belowdisplayshortskip}{2mm} \normalbaselines
\raggedbottom

\begin{document}

\title [Poisson structure and second quantization of quantum cluster algebras]
{Poisson structure and second quantization of \\quantum cluster algebras}
\author{Fang Li}\author{Jie Pan}

\address{Fang Li
\newline Department of Mathematics, Zhejiang University (Yuquan Campus), Hangzhou, Zhejiang 310027, P. R. China}
\email{fangli@zju.edu.cn}

\address{Jie Pan
\newline Department of Mathematics, Zhejiang University (Yuquan Campus), Hangzhou, Zhejiang 310027, P. R. China}
\email{panjie_zhejiang@qq.com}

\thanks{\textit{Mathematics Subject Classification(2010)}: 13F60, 46L65, 17B63}
\thanks{\textit{Keywords}: quantum cluster algebra, compatible Poisson structure, second quantization}
\date{version of \today}

\renewcommand{\thefootnote}{\alph{footnote}}
\maketitle
\bigskip
\begin{abstract}

Motivated by the phenomenon that compatible Poisson structures on a cluster algebra play a key role on its quantization (that is, quantum cluster algebra), we introduce the second quantization of a quantum cluster algebra, which means the correspondence between compatible Poisson structures of the quantum cluster algebra and its secondly quantized cluster algebras. Based on this observation, we find that a quantum cluster algebra possesses dual quantum cluster algebras such that their second quantization is essentially the same.

As an example, we give the secondly quantized cluster algebra $A_{p,q}(SL(2))$ of $Fun_{\C}(SL_{q}(2))$ in \S5.2.1 and show that it is a non-trivial second quantization, which may be realized as a parallel supplement to two parameters quantization of the general quantum group. Furthermore, we obtain a class of quantum cluster algebras with coefficients which possess a non-trivial second quantization. Its one special kind is quantum cluster algebras with almost principal coefficients with an additional condition.

Finally, we prove that the compatible Poisson structures of a quantum cluster algebra without coefficients is always a locally standard Poisson structure. Following this, it is shown that the second quantization of a quantum cluster algebra without coefficients is in fact trivial.

\end{abstract}
\vspace{4mm}

\tableofcontents

\section{Introduction and preliminaries}

The introduction of quantum cluster algebras \cite{BZ} is an important development of the theory of cluster algebras, which establishes a connection between cluster theory and the theory of quantum groups.

The theory of quantum groups comes from theoretic physics. It first appears in the inverse scattering method used to construct and solve quantum integrable systems and then is developed by Drinfield, Jimbo, etc. However, up to now, there is no unified definition for a quantum group.
Considering that the structures of quantum torus for each seed of a quantum cluster algebra are some kind of quantum groups, we can regard a quantum cluster algebra as a family of quantum groups with mutation actions.

It is often a complicated work to explore the quantization of a concrete Lie algebra or an associative algebra. Moreover, the complexity of some problems faced by mathematicians and physicists leads to the demand of two-parameters and multi-parameters quantum groups.

But the analogue of quantum groups with two or even multiple parameters had not been set up in the theory of cluster algebras until recently. It is still an open problem how to properly define quantum cluster algebras with two or even multiple parameters. For example, it is natural to consider to give the definition by simply adding one parameter to quantum cluster algebras in the form as follows:
\begin{equation}\label{I}
X_{i}X_{j}=p^{\delta_{ij}}q^{\lambda_{ij}}X^{e_{i}+e_{j}}.
\end{equation}
There has been interesting researches given in \cite{GY} and \cite{FH} focusing on this aspect. In \cite{GY}, the authors generalizes original quantum cluster algebras to such quantum cluster algebras with more than one parameters, and then proves that a very large class of quantum nilpotent algebras, defined axiomatically, admit this quantum cluster algebra structures. And in \cite{FH}, such quantum cluster algebras, called Toroidal cluster algebras, are found a profound relationship with quantum affine algebras under the sense of categorification.

However, on the other hand, as pointed out in \cite{GY} and \cite{FH}, the parameters $\delta_{ij}$ and $\lambda_{ij}$ in (\ref{I}) are independent, which means a two-parameters quantum cluster algebra so defined there is essentially determined by two parallel one-parameter quantum cluster algebras. In a sense, we may think such defined quantum cluster algebras with multiple parameters are `` {\em trivial} ". Hence, the new question is:

{\em How to give a more non-trivial definition of quantum cluster algebras with two or multiple parameters in the above sense}?

From our observation, the key lies in that how to meaningfully connect $\delta_{ij}$ with $\lambda_{ij}$ to make them not independent to each other. It is the main aim of this paper. We will focus on the correspondence between the quantization of a cluster algebra and the compatible Poisson structure on it(\cite{BZ},\cite{N}), and lift this fact to non-commutative level so as to define a kind of quantum algebras with two parameters, which are not trivial in the above sense. We call such algebras as {\em secondly quantized cluster algebras}. We can continue this way to discuss the possibility of higher quantization, which will be mentioned in the sequel.\vspace{2mm}

Poisson geometry and its related Poisson algebra came from mechanics in nineteenth century named after the famous mathematical physicist S.D.Poisson. Due to its connections with classical mechanics, symplectic geometry, basing on integrable system and the correspondence between the quantization of a cluster algebra and the compatible Poisson structure on the cluster algebra, we will be able to find the background of (secondly) quantized cluster algebras in these research areas.

For $n\leqslant m\in\mathbb{N}$, denote $\mathbb{T}_{n}$ the \textbf{$n$-regular tree} with vertices $t\in \mathbb{T}_{n}$.
Let $\mathscr{F}$ be the field of rational functions over $\mathbb{Q}$ in $m$ independent variables.
\begin{Definition}
(1)\; A \textbf{seed} at vertex $t\in\mathbb{T}_n$ is a pair $\Sigma=(\tilde{\mathcal{X}}(t),\tilde{B}(t))$ such that\\
$\bullet$\; $\tilde{\mathcal{X}}(t)=(x_{1;t},x_{2;t},\cdots,x_{m;t})$ is an $m$-tuple satisfying that the elements form a free generating set of $\mathscr{F}$;\\
$\bullet$\; $\tilde{B}(t)$ is an $m\times n$ integer matrix such that the principal part is skew-symmetrizable, i.e. there is a positive diagonal matrix $D$ satisfying $DB(t)$ is skew-symmetric, where $B(t)$ is the first $n$ rows of $\tilde{B}(t)$.

(2)\; For any $k\in[1,n]$, define the \textbf{mutation} $\mu_{k}$ at direction $k$ satisfying that $\mu_{k}(\Sigma)=\Sigma^{\prime}=(\tilde{\mathcal{X}}^{\prime},\tilde{B}^{\prime})$,
\begin{equation*}
\mu_{k}(x_{k;t})=\frac{\prod\limits_{i=1}^{m}x_{i;t}^{[b_{ik}^{t}]_{+}}+\prod\limits_{i=1}^{m}x_{i;t}^{[-b_{ik}^{t}]_{+}}}{x_{k;t}}
\end{equation*}
where $[a]_{+}=max\left\{a,0\right\}$ for $a\in\mathbb{R}$. And
\[\tilde{\mathcal{X}}^{\prime}=(\tilde{\mathcal{X}}(t))\backslash\left\{x_{k;t}\right\})\bigcup\left\{\mu_{k}(x_{k;t})\right\}.\]
\[\tilde{B}^{\prime}=\mu_k(\tilde B(t))=(b_{ij}^{\prime})_{m\times n}\] satisfying that
\begin{equation}\label{matrixmut}
b_{ij}^{\prime}=\left\{
\begin{array}{lcr}
-b_{ij}(t)&&if\quad i=k \quad or\quad j=k\\
b_{ij}(t)+sgn(b_{ik}(t))[b_{ik}(t)b_{kj}(t)]_{+}&&otherwise
\end{array}
\right .
\end{equation}
\end{Definition}
It can be proved that $\mu_{k}$ is an involution.
\begin{Definition}
Given seeds $\Sigma(t)=(\tilde{\mathcal{X}}(t),\tilde{B}(t))$ at $t\in \mathbb{T}_{n}$ so that $\Sigma(t^{\prime})=\mu_{k}(\Sigma(t))$ for any $t-t^{\prime}$ in $\mathbb{T}_{n}$ connected by an edge labeled $k\in[1,n]$, then the $\mathbb{Z}[x_{n+1}^{\pm1},\cdots,x_{m}^{\pm 1}]$-subalgebra of $\mathscr{F}$ generated by all variables in $\bigcup\limits_{t\in\mathbb{T}_{n}}\mathcal{X}(t)$ is called the \textbf{cluster algebra} $A(\Sigma)$ (or simply $A$) associated with $\Sigma$.
\end{Definition}

Besides, we also introduce the concept of quantum cluster algebras. For a vertex $t_{0}\in\mathbb{T}_n$, let $\Lambda(t_{0})=(\lambda_{ij})_{m\times m}$ be a skew-symmetric integer matrix satisfying
\begin{equation}\label{2}
\tilde{B}(t_{0})^{\top}\Lambda(t_{0})=
\begin{pmatrix}
D& O
\end{pmatrix}_{n\times m}
\tag{C1}
\end{equation}
Then $(\tilde{B}(t_{0}),\Lambda(t_{0}))$ is called a \textbf{compatible pair}. Let $\left\{e_{i}\right\}_{i=1}^{m}$ be the standard basis for $\mathbb{Z}^{m}$. Define a skew-symmetric bilinear form $\Lambda_{t_{0}}:\mathbb{Z}^{m}\times\mathbb{Z}^{m}\rightarrow\mathbb{Z}$ satisfying that
\[\Lambda_{t_{0}}(e,f)=\sum \limits_{i,j=1}^{m}a_{i}b_{j}\Lambda_{t_{0}}(e_{i},e_{j})=\sum \limits_{i,j=1}^{m}a_{i}b_{j}\lambda_{ij},\]
where $e=\sum \limits_{i=1}^{m}a_{i}e_{i},f=\sum \limits_{j=1}^{m}b_{j}e_{j}$.

Give a set of variables
\[\tilde X(t_{0})=\left\{X_{t_{0}}^{e_{1}},\cdots,X_{t_{0}}^{e_{n}},X^{e_{n+1}},\cdots,X^{e_{m}}\right\}\]
called the \textbf{(extended) cluster} at $t_{0}$, where $X_{t_{0}}^{e_{i}},i\in[1,n]$ are called the \textbf{cluster variables} at $t_{0}$ while $X^{e_{i}},i\in[n+1,m]$ are called \textbf{frozen variables}.

For the Laurent polynomial ring $\mathbb{Z}[q^{\pm\frac{1}{2}}]$ with a formal variable $q$, define a $\mathbb{Z}[q^{\pm\frac{1}{2}}]$-algebra $T_{t_{0}}$ generated by $X(t_{0})$ satisfying the following relations:
\[X_{t_{0}}^{e_{i}}X_{t_{0}}^{e_{j}}=q^{\frac{1}{2}\lambda_{ij}}X_{t_{0}}^{e_{i}+e_{j}},\forall i,j\in[1,m]\]
We call $T_{t_{0}}$ the \textbf{quantum torus} at $t_{0}$. Denoted by $\mathscr{F}_{q}$ the skew-field of fractions of $T_{t_{0}}$.

In general, for any $e\in\mathbb{Z}^{m}$, let $X_{t_{0}}^{e}$ denote the variable corresponding to $e$. Due to the bilinearity of $\Lambda_{t_{0}}$ and the fact that $e$ is generated by $\left\{e_{i}\mid i\in[1,m]\right\}$, we obtain that
\begin{equation}\label{1}
X_{t_{0}}^{e}X_{t_{0}}^{f}=q^{\frac{1}{2}\Lambda_{t_{0}}(e,f)}X_{t_{0}}^{e+f}
\end{equation}

\begin{Definition}
[\cite{BZ}]
(i)\; Given a fixed $t_{0}\in \mathbb{T}_{n}$, we denote $\Sigma(t_{0})=(\tilde{X}(t_{0}),\tilde{B}(t_{0}),\Lambda(t_{0}))$ an \textbf{initial quantum seed}.

(ii)\; Let $t\in\mathbb{T}_{n}$ be an adjacent vertex of $t_{0}$, i.e. $t-t_{0}$ is an edge in $\mathbb{T}_{n}$ labeled $k\in[1,n]$. Let $b_{k}(t_{0})$ be the $k$-th column of $\tilde{B}(t_{0})$. Define the \textbf{mutation} $\mu_{k}$ at direction $k$ satisfying that
\begin{equation*}
X_{t}^{e_{k}}=\mu_{k}(X_{t_{0}}^{e_{k}})=X_{t_{0}}^{-e_{k}+[b_{k}(t_{0})]_{+}}+X_{t_{0}}^{-e_{k}+[-b_{k}(t_{0})]_{+}}
\end{equation*}
such that
\[\tilde{X}(t)=(\tilde{X}(t_{0})\backslash\left\{X^{e_{k}}(t_{0})\right\})\bigcup\left\{X_{t}^{e_{k}}\right\}.\]
where $\tilde B(t)=\mu_k(\tilde B(t_0))$ is the same as that according to (\ref{matrixmut}).
And, $\Lambda(t)=\mu_k(\Lambda(t_0))=(\lambda_{ij}(t))_{m\times m}$ where
\begin{equation}\label{lambdamutation}
\lambda_{ij}(t)=\left\{
\begin{array}{lcr}
-\lambda_{kj}(t_{0})+\sum\limits_{l=1}^{m}[b_{lk}(t_{0})]_{+}\lambda_{lj}(t_{0})&&if\quad i=k\neq j\\
-\lambda_{ik}(t_{0})+\sum\limits_{l=1}^{m}[b_{lk}(t_{0})]_{+}\lambda_{il}(t_{0})&&if\quad j=k\neq i\\
\lambda_{ij}(t_{0})&&otherwise
\end{array}
\right .
\end{equation}

\end{Definition}
It can be proved that the seed $\Sigma(t)=(\tilde{X}(t),\tilde{B}(t),\Lambda(t))$ at $t$ still satisfies the relation (\ref{1}) and (\ref{2}) and $\mu_{k}$ is an involution.

It can be seen that in the quantum case, because of the relation (\ref{2}), $\tilde{B}(t)$ is always of full column rank $n$.
\begin{Definition}
$\left(\right.$\cite{BZ}$\left.\right)$ Given seeds $\Sigma(t)=(\tilde{X}(t),\tilde{B}(t),\Lambda(t))$ at $t\in \mathbb{T}_{n}$, if $\Sigma(t)$ and $\Sigma(t^{\prime})$ can do mutation to each other for any adjacent pair of vertices $t-t^{\prime}$ in $\mathbb{T}_{n}$, then the $\mathbb{Z}[q^{\pm\frac{1}{2}}][X^{\pm e_{n+1}},\cdots,X^{\pm e_{m}}]$-subalgebra of $\mathscr{F}_{q}$ generated by all variables in $\bigcup\limits_{t\in\mathbb{T}_{n}}X(t)$ is called the \textbf{quantum cluster algebra} $A_{q}(\Sigma)$ (or simply $A_{q}$) associated with $\Sigma$.
\end{Definition}

We call $\Lambda(t)$ the {\bf deformation matrix} of this quantum cluster algebra $A_q$ at $t\in \mathbb T_n$.

Now we explain the relationship between cluster algebras in non-quantum and quantum cases from their definitions.

For a quantum cluster algebra $A_q$, let $q\rightarrow 1$. Then we obtain a \emph{non-quantum} cluster algebra $A$, which is called {\bf the correspondent classical version} (briefly, {\bf CCV}) of $A_q$; conversely, $A_q$ is called {\bf the correspondent quantum version} (briefly, {\bf CQV}) of $A$. Under this relationship, the (extended) clusters $\tilde X(t)=\{X_{t}^{e_{1}}, \cdots, X_{t}^{e_{m}}\}$ in $A_q$ and the (extended) clusters $\tilde {\mathcal X}(t)=\{x_{1;t}, \cdots, x_{m;t}\}$ in $A$ correspond to each other.

Note that since the rank of $\tilde{B}(t)$ of $A_q$ is always $n$ as mentioned above, only those cluster algebras $A$ whose $\tilde{B}(t)$ are of rank $n$ have the correspondent quantum versions.

Due to our motivation for this work, we will first discuss Poisson structures on quantum cluster algebras. So here, we recall the concepts and notations of Poisson structures.

A \textbf{Poisson structure} on an associative k-algebra $\A$ is a triple $(\A,\cdot,\left\{-,-\right\})$ where $(\A,\left\{-,-\right\})$ is a Lie k-algebra i.e. satisfying Jacobi identity such that the Leibniz rule holds: for any $a,b,c\in \A$,
\[\left\{a,bc\right\}=\left\{a,b\right\}c+b\left\{a,c\right\}.\]
Algebra $\A$ together with a Poisson structure on it is called a \textbf{Poisson algebra}. Denote the \textbf{Hamiltonian} of $a\in \A$ by
\[ham(a)=\left\{a,-\right\}\in End_{k}(\A,\A).\]
Then the Leibniz rule is equivalent to that $ham(a)$ is a derivation of $\A$ as an associative algebra for any $a\in \A$.
\begin{Definition}
Let $\A$ be an associative algebra. $[a,b]=ab-ba$ is called the \textbf{commutator} of a and b, for any $a,b\in \A$. And for any $\lambda \in k$, $(\A,\cdot,\lambda[-,-])$ is a Poisson algebra called a \textbf{standard Poisson structure} on $(\A,\cdot)$.
\end{Definition}

As we know, so far only the Poisson structure of (non-quantum) cluster algebras has been studied, e.g. see \cite{GSV} and \cite{N}. We recall the following notions from \cite{GSV}:

(1)\; For a cluster algebra $A$, one of its extended cluster $\tilde{\mathcal X}=(x_{1},\cdots,x_{m})$ is said to be \textbf{log-canonical} with respect to a Poisson structure $(A,\cdot,\left\{-,-\right\})$ if $\left\{x_{i},x_{j} \right\}=\psi_{ij}x_{i}x_{j}$, where $\psi_{ij}\in \mathbb{Z}$ for any $i,j\in[1,m]$.

(2)\; A Poisson structure $\left\{-,-\right\}$ on a cluster algebra $A$ is called \textbf{compatible} with $A$ if all clusters in $A$ are log-canonical with respect to $\left\{ -,-\right\}$. In this case, the matrix $\Psi=(\psi_{ij})_{m\times m}$ is called the {\bf Poisson matrix} associated to cluster $\tilde{\mathcal X}$ (with respect to the Poisson structure).

From \cite{N}, we know that a compatible Poisson structure is given on a cluster algebra $A$ via a family of $\Psi(t)$ ($t\in \mathbb{T}_n$) as Poisson matrices such that following mutation formula of $\Psi(t)$ holds for each adjacent vertex pair $(t, t')$ in $\mathbb{T}_n$ connected by an edge labeled $k$:

\begin{equation}\label{non-quantum omega mutation}
\psi_{ij}(t^{\prime})=\left\{
\begin{array}{lcr}
-\psi_{kj}(t)+\sum\limits_{l=1}^{m}[b_{lk}(t)]_{+}\psi_{lj}(t)&&\text{if}\; i=k\neq j\\
-\psi_{ik}(t)+\sum\limits_{l=1}^{m}[b_{lk}(t)]_{+}\psi_{il}(t)&&\text{if}\; j=k\neq i\\
\psi_{ij}(t)&& \text{otherwise}
\end{array}
\right .
\end{equation}
And for a compatible Poisson structure, there is always $\tilde{B}^{\top}(t)\Psi(t)=(D\text{ }0)$ for any $t\in\mathbb{T}_{n}$.

We similarly define the compatibility of non-commutative Poisson structures on a quantum cluster algebra.

\begin{Definition}
(1)\; For a quantum cluster algebra $A_{q}$, one of its extended cluster $\tilde X(t)=(X_{1},\cdots,X_{m})$ at $t\in \mathbb{T}_n$ is said to be \textbf{log-canonical} with respect to a Poisson structure $(A_{q},\cdot,\left\{-,-\right\})$ if $\left\{X_{i},X_{j} \right\}=\omega_{ij}X^{e_{i}+e_{j}}$, where $\omega_{ij}\in \mathbb{Z}[q^{\pm \frac{1}{2}}]$ for any $i,j\in[1,m]$. In this case, the matrix $\Omega(t)=(\omega_{ij})_{m\times m}$ is called the {\bf Poisson matrix} associated to the cluster $\tilde X(t)$ (with respect to the Poisson structure).

(2)\; A Poisson structure $\left\{-,-\right\}$ on a quantum cluster algebra $A_{q}$ is called \textbf{compatible} with $A_{q}$ if all clusters in $A_{q}$ are log-canonical with respect to $\left\{ -,-\right\}$.
\end{Definition}

Trivially, Poisson matrices are always skew-symmetric for either commutative or non-commutative cases.

The paper is organized as follows.

In Section 2 we give the mutation formula of Poisson matrices in a quantum cluster algebra $A_q$ (Theorem \ref{pmm}) and the equivalent characterization for two adjacent clusters to be log-canonical with a Poisson structure on $A_q$ (Theorem \ref{exchange}).

In Section 3, we introduce in Definition \ref{IIq} the concept of the second quantization of a quantum cluster algebra $A_q$ based on the correspondence between Poisson matrices and (second) deformation matrices, which means the correspondence between compatible Poisson structures and (secondly) quantized cluster algebras. And Proposition \ref{BW=D} is given here to show that a second deformation matrix must satisfy (\ref{add}) which is the same as (\ref{2}). Following this proposition, we find in Theorem \ref{twoways} that a quantum cluster algebra possesses dual quantum cluster algebras such that their second quantization are essentially the same.

In Section 4, we prove Lemma \ref{zero}, in which the condition (C4) for all seeds is transformed to the condition $\hat{\Lambda}\tilde{B}=c\tilde{D}$ for an arbitrary seed.

In Section 5, we introduce the cluster decomposition of a quantum cluster algebra. Then with the help of Lemma \ref{zero}, the compatibility of a Poisson structure comes down to the cluster indecomposable case, see Proposition \ref{decoposition to pieces}. And so does second quantization (Theorem \ref{cluster decomposition}).

As an example, we give the secondly quantized cluster structure $A_{p,q}(SL(2))$ of $Fun_{\C}(SL_{q}(2))$ in \S5.2.1 and show that it is a non-trivial second quantization, which may be realized as a parallel supplement to two parameters quantization of the general quantum group. After that, via a cluster extension, we obtain a class of quantum cluster algebras which possess a non-trivial second quantization (Theorem \ref{ext}). One special kind of it is quantum cluster algebras with almost principal coefficients with an additional condition (Corollary \ref{almost}).

In Section 6, it is shown that a Poisson structure is compatible with a quantum cluster algebra without coefficients if and only if it is a locally standard Poisson structure (Theorem \ref{r2}). Therefore the second quantization of a quantum cluster algebra without coefficients is always trivial (Corollary \ref{trivial})

\vspace{4mm}

\section{Compatible Poisson structures on quantum cluster algebras and mutation of Poisson matrices}

In \cite{GSV}, compatible Poisson structures on cluster algebras are characterized and moreover, such structures are constructed on Grassmannians. In this section and Section 5, we will discuss Poisson structures compatible with a quantum cluster algebra.

Here and in the following, we always assume that in a quantum cluster algebra $A_q$, the initial quantum seed at $t_{0}$ is $(\tilde{X},\tilde{B},\Lambda)$, where $\tilde{X}=(X_{1},X_{2},\ldots,X_{m})$ (denote $X_{i}=X^{e_{i}},i\in[1,m]$) with the first $n$ variables mutable, $\tilde{B}$ is an $m \times n$ skew-symmetrizable integer matrix with skew-symmetizer $D$ and $\Lambda$ is an $m \times m$ skew-symmetric matrix such that $(\tilde{B},\Lambda)$ is a compatible pair.

First of all, notice that if $\left\{-,-\right\}$ is trivial, i.e.$\left\{X,Y\right\}=0$ for any $X,Y\in A_{q}$, then $\omega_{ij}$ are all 0, thus it is naturally compatible with $A_{q}$. Therefore in the following we only consider about nontrivial Poisson structures.
\begin{Theorem}\label{pmm}
For a quantum cluster algebra $A_q$, if its quantum seed $\tilde{X}$ and $\mu_{k}(\tilde{X})$ are log-canonical with a nontrivial Poisson structure $\left\{-,-\right\}$ and the Poisson matrices associated to them are $\Omega=(\omega_{ij})_{m\times m}$ and $\Omega^{\prime}=(\omega_{ij}^{\prime})_{m\times m}$ respectively, then

(1)\; for any $j\neq k$, where $j\in[1,m]$ while $k\in [1,n]$, we have
\begin{equation}\label{**}
\begin{array}{l}
\quad \sum \limits_{b_{tk}>0}(\omega_{tj}q^{\frac{1}{2}\lambda_{jt}}\sum \limits_{h=1}^{[b_{tk}]_{+}}q^{\sum \limits_{i=t}^{m}([b_{ik}]_{+}-\delta_{ik})\lambda_{ji}-h\lambda_{jt}})-\omega_{kj}q^{\frac{1}{2}\lambda_{kj}+\sum \limits_{i=k+1}^{m}\lambda_{ji}[b_{ik}]_{+}}\\
=\sum \limits_{b_{tk}<0}(\omega_{tj}q^{\frac{1}{2}\lambda_{jt}}\sum \limits_{h=1}^{[-b_{tk}]_{+}}q^{\sum \limits_{i=t}^{m}([-b_{ik}]_{+}-\delta_{ik})\lambda_{ji}-h\lambda_{jt}})-\omega_{kj}q^{\frac{1}{2}\lambda_{kj}+\sum \limits_{i=k+1}^{m}\lambda_{ji}[-b_{ik}]_{+}}
\end{array}
\end{equation}
where $\delta_{ij}$ equals 1 when $i=j$ and 0 otherwise.

(2)\; the mutation formula of Poisson matrices $\Omega$ in direction $k$ is given as follows:
\begin{equation}\label{omega mutation formula}
\omega_{ij}^{\prime}=
\left\{
\begin{array}{lcr}
q^{\frac{1}{2}(\lambda_{jk}-\sum \limits_{t=1}^{m}[b_{tk}]_{+}\lambda_{jt})}H &&if\quad i=k\neq j\\
-\omega_{ki}^{\prime}&&if\quad j=k\neq i\\
\omega_{ij}&&otherwise
\end{array}
\right .
\end{equation}
where $H$ denotes the left or right side of (\ref{**}).

\end{Theorem}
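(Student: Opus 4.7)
The plan is to compute $\{X_t^{e_k}, X_t^{e_j}\}$ (for $j\neq k$) in two ways and match them. On one hand, with $a_+=-e_k+[b_k(t_0)]_+$ and $a_-=-e_k+[-b_k(t_0)]_+$, the quantum mutation rule gives $X_t^{e_k}=X_{t_0}^{a_+}+X_{t_0}^{a_-}$, so by bilinearity
\[
\{X_t^{e_k}, X_t^{e_j}\} \;=\; \{X_{t_0}^{a_+}, X_{t_0}^{e_j}\} + \{X_{t_0}^{a_-}, X_{t_0}^{e_j}\}.
\]
On the other hand, log-canonicity at seed $t$ forces this to equal $\omega'_{kj}X_t^{e_k+e_j}$; expanding $X_t^{e_k+e_j}=q^{-\frac{1}{2}\lambda_{kj}(t)}X_t^{e_k}X_t^{e_j}$ in $T_{t_0}$ splits the target into two monomial pieces, of exponents $a_++e_j$ and $a_-+e_j$ respectively.

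For each $\{X_{t_0}^{a_\pm},X_{t_0}^{e_j}\}$, I write $X_{t_0}^{a_\pm}$ as a $q$-scaled ordered product
\[
X_{t_0}^{a_\pm}=q^{\alpha_\pm}\,(X^{e_1})^{c_1^\pm}(X^{e_2})^{c_2^\pm}\cdots (X^{e_m})^{c_m^\pm},\qquad c_i^\pm=[\pm b_{ik}]_+-\delta_{ik},
\]
and apply the Leibniz rule factor by factor. Each positive-power factor $(X^{e_s})^{c_s^\pm}$ (those $s$ with $\pm b_{sk}>0$) contributes via
$\{(X^{e_s})^{c_s},X^{e_j}\}=\omega_{sj}\sum_{h=1}^{c_s}(X^{e_s})^{h-1}X^{e_s+e_j}(X^{e_s})^{c_s-h}$; after rewriting $X^{e_s+e_j}=q^{-\frac{1}{2}\lambda_{sj}}X^{e_s}X^{e_j}$ and pushing $X^{e_j}$ to the rightmost position of the whole product via $X^{e_j}X^{e_i}=q^{\lambda_{ji}}X^{e_i}X^{e_j}$, this yields exactly the inner sum $\omega_{sj}q^{\frac{1}{2}\lambda_{js}}\sum_{h=1}^{c_s^\pm}q^{\sum_{i=s}^m c_i^\pm\lambda_{ji}-h\lambda_{js}}$ that appears in (\ref{**}). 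The single negative power $(X^{e_k})^{-1}$ (arising at $s=k$ since $c_k^\pm=-1$) must be handled separately through the derivation identity $\{(X^{e_k})^{-1},X^{e_j}\}=-(X^{e_k})^{-1}\{X^{e_k},X^{e_j}\}(X^{e_k})^{-1}=-\omega_{kj}q^{\frac{1}{2}\lambda_{kj}}(X^{e_k})^{-1}X^{e_j}$, and after passing $X^{e_j}$ through the remaining factors to its right it contributes precisely the isolated term $-\omega_{kj}q^{\frac{1}{2}\lambda_{kj}+\sum_{i=k+1}^m[\pm b_{ik}]_+\lambda_{ji}}$. Assembling everything gives
\[
\{X_{t_0}^{a_\pm}, X_{t_0}^{e_j}\} \;=\; q^{\frac{1}{2}\Lambda_{t_0}(a_\pm,e_j)}\,X_{t_0}^{a_\pm+e_j}\,H_\pm,
\]
where $H_+$ (resp.\ $H_-$) equals the LHS (resp.\ RHS) of (\ref{**}).

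Because $\tilde B$ has full column rank in the quantum setting, $b_k\neq 0$ and hence $a_+\neq a_-$, so the monomials $X_{t_0}^{a_\pm+e_j}$ are linearly independent in $T_{t_0}$. Matching coefficients on both sides of $\{X_t^{e_k},X_t^{e_j}\}=\omega'_{kj}X_t^{e_k+e_j}$ therefore yields
\[
H_+\;=\;\omega'_{kj}\,q^{-\frac{1}{2}\lambda_{kj}(t)}\;=\;H_-.
\]
The first equality is (\ref{**}); substituting the mutation formula (\ref{lambdamutation}) for $\lambda_{kj}(t)$ and using skew-symmetry of $\Lambda$ turns the second into the asserted expression $\omega'_{kj}=q^{\frac{1}{2}(\lambda_{jk}-\sum_t[b_{tk}]_+\lambda_{jt})}H$. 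The other two cases of (\ref{omega mutation formula}) are immediate: $\omega'_{ki}=-\omega'_{ik}$ by skew-symmetry of the Poisson matrix, and when $i,j\neq k$ one has $X_t^{e_i}=X_{t_0}^{e_i}$, $X_t^{e_j}=X_{t_0}^{e_j}$, together with $\lambda_{ij}(t)=\lambda_{ij}$ by (\ref{lambdamutation}), which forces $\omega'_{ij}=\omega_{ij}$. The main obstacle is the bookkeeping in paragraph two: keeping the $q$-exponents consistent when interleaving the Leibniz rule with the noncommutation relations of $T_{t_0}$, and checking that the isolated $-\omega_{kj}$ term coming from $(X^{e_k})^{-1}$ combines with the positive-power sums so as to reproduce the exponent $\sum_{i=s}^m([b_{ik}]_+-\delta_{ik})\lambda_{ji}-h\lambda_{js}$ of (\ref{**}) verbatim.
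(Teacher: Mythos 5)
Your proposal is correct and follows essentially the same route as the paper: expand $X_t^{e_k}=X_{t_0}^{a_+}+X_{t_0}^{a_-}$, apply the Leibniz rule to the ordered product (treating the $X_k^{-1}$ factor via the derivation identity), push $X^{e_j}$ to the right, and compare coefficients of the two linearly independent monomials $X^{a_\pm}X_j$ against $\omega'_{kj}X_t^{e_k+e_j}$ to obtain both (\ref{**}) and the mutation formula. The bookkeeping you flag as the main obstacle is exactly what the paper's displayed computation carries out, and your intermediate identities (the $q^{\frac{1}{2}\lambda_{kj}}$ factor from the inverse, the exponent $\sum_{i=s}^m c_i^\pm\lambda_{ji}-h\lambda_{js}$) agree with it verbatim.
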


\begin{proof}
(1)\; For any $k\in[1,n]$, let $\tilde{X}^{\prime}=\mu_{k}(\tilde{X})=(X_{1},\cdots,X_{k-1},X_{k}^{\prime},\cdots,X_{m})$. By the assumption, $\tilde{X}^{\prime}$ is log-canonical with respect to $\left\{-,-\right\}$. Therefore $\left\{X_{k}^{\prime},X_{j} \right\}=\omega_{kj}^{\prime} {X^{\prime}}^{e_{k}+e_{j}}$ for some $\omega_{kj}^{\prime}\in \mathbb{Z}[q^{\pm \frac{1}{2}}]$ for any $j\neq k\in[1,m]$.

By the exchange relations for quantum cluster algebras, we obtain that

$ \left\{X_{k}^{\prime},X_{j} \right\}
=\left\{X^{-e_{k}+\sum \limits_{b_{ik}>0}b_{ik}e_{i}}, X_{j} \right\} + \left\{X^{-e_{k}-\sum \limits_{b_{ik}<0}b_{ik}e_{i}}, X_{j} \right\} $
\begin{equation}\label{*}
=q^{\frac{1}{2}\sum \limits_{l>h}\lambda_{lh}([b_{lk}]_{+}-\delta_{lk})([b_{hk}]_{+}-\delta_{hk})}\left\{\prod \limits_{i=1}^{m}X_{i}^{[b_{ik}]_{+}-\delta_{ik}}, X_{j} \right\}
+q^{\frac{1}{2}\sum \limits_{l>h}\lambda_{lh}([-b_{lk}]_{+}-\delta_{lk})([-b_{hk}]_{+}-\delta_{hk})}\left\{\prod \limits_{i=1}^{m}X_{i}^{[-b_{ik}]_{+}-\delta_{ik}},X_{j}\right\},
\end{equation}
and

$ \left\{\prod \limits_{i=1}^{m}X_{i}^{[b_{ik}]_{+}-\delta_{ik}},X_{j}\right\} $\\
$ =\sum \limits_{b_{tk}>0}\prod\limits_{i=1}^{t-1}X_{i}^{[b_{ik}]_{+}-\delta_{ik}}\left\{X_{t}^{[b_{tk}]_{+}},X_{j}\right\}\prod \limits_{i=t+1}^{m}X_{i}^{[b_{ik}]_{+}-\delta_{ik}}-\prod \limits_{i=1}^{k-1}X_{i}^{[b_{ik}]_{+}}\left\{X_{k}^{-1},X_{j}\right\}\prod \limits_{i=k+1}^{m}X_{i}^{[b_{ik}]_{+}} $ \\
$=\sum \limits_{b_{tk}>0}\sum\limits_{h=0}^{[b_{tk}]_{+}-1}\prod \limits_{i=1}^{t-1}X_{i}^{[b_{ik}]_{+}-\delta_{ik}}X_{t}^{h}\left\{X_{t},X_{j}\right\}X_{t}^{[b_{tk}]_{+}-h-1}\prod \limits_{i=t+1}^{m}X_{i}^{[b_{ik}]_{+}-\delta_{ik}} $\\
$ .\;\;\;\;\;\;\;\;\;\;\;\;\;\;\;\;\;\;\;\;\;\; -\prod \limits_{i=1}^{k-1}X_{i}^{[b_{ik}]_{+}}X_{k}^{-1}\left\{X_{k},X_{j}\right\}X_{k}^{-1}\prod \limits_{i=k+1}^{m}X_{i}^{[b_{ik}]_{+}} \;\;\;\;\;\;(\text{say}\;\prod \limits_{i=1}^{0}M = 1,\forall M) $\\
$ =\sum \limits_{b_{tk}>0}\sum\limits_{h=0}^{[b_{tk}]_{+}-1}\omega_{tj}\prod \limits_{i=1}^{t-1}X_{i}^{[b_{ik}]_{+}-\delta_{ik}}X_{t}^{h}X^{e_{t}+e_{j}}X_{t}^{[b_{tk}]_{+}-h-1}\prod \limits_{i=t+1}^{m}X_{i}^{[b_{ik}]_{+}-\delta_{ik}}
-\omega_{kj}\prod \limits_{i=1}^{k-1}X_{i}^{[b_{ik}]_{+}}X_{k}^{-1}X^{e_{k}+e_{j}}X_{k}^{-1}\prod \limits_{i=k+1}^{m}X_{i}^{[b_{ik}]_{+}} $\\
$ =\sum \limits_{b_{tk}>0}\sum\limits_{h=0}^{[b_{tk}]_{+}-1}\omega_{tj}q^{\frac{1}{2}\lambda_{jt}}\prod \limits_{i=1}^{t-1}X_{i}^{[b_{ik}]_{+}-\delta_{ik}}X_{t}^{h+1}X_{j}X_{t}^{[b_{tk}]_{+}-h-1}\prod \limits_{i=t+1}^{m}X_{i}^{[b_{ik}]_{+}-\delta_{ik}}
-\omega_{kj}q^{\frac{1}{2}\lambda_{kj}}\prod \limits_{i=1}^{k-1}X_{i}^{[b_{ik}]_{+}}X_{k}^{-1}X_{j}\prod \limits_{i=k+1}^{m}X_{i}^{[b_{ik}]_{+}} $\\
$ =(\sum \limits_{b_{tk}>0}\sum\limits_{h=0}^{[b_{tk}]_{+}-1}\omega_{tj}q^{\frac{1}{2}\lambda_{jt}+([b_{tk}]_{+}-h-1)\lambda_{jt}+\sum \limits_{i=t+1}^{m}\lambda_{ji}([b_{ik}]_{+}-\delta_{ik})}-\omega_{kj}q^{\frac{1}{2}\lambda_{kj}+\sum \limits_{i=k+1}^{m}\lambda_{ji}[b_{ik}]_{+}})\prod \limits_{i=1}^{m}X_{i}^{[b_{ik}]_{+}-\delta_{ik}}X_{j} $\\
$ =(\sum \limits_{b_{tk}>0}\sum\limits_{h=1}^{[b_{tk}]_{+}}\omega_{tj}q^{\frac{1}{2}\lambda_{jt}+\sum
\limits_{i=t}^{m}\lambda_{ji}([b_{ik}]_{+}-\delta_{ik})-h\lambda_{jt}}-\omega_{kj}q^{\frac{1}{2}\lambda_{kj}+\sum \limits_{i=k+1}^{m}\lambda_{ji}[b_{ik}]_{+}})\prod \limits_{i=1}^{m}X_{i}^{[b_{ik}]_{+}-\delta_{ik}}X_{j}. $
\vspace{3mm}\\
And similarly,
\begin{equation*}
\left\{\prod \limits_{i=1}^{m}X_{i}^{[-b_{ik}]_{+}-\delta_{ik}},X_{j}\right\}=(\sum \limits_{b_{tk}<0}\sum\limits_{h=1}^{[-b_{tk}]_{+}}\omega_{tj}q^{\frac{1}{2}\lambda_{jt}+\sum
\limits_{i=t}^{m}\lambda_{ji}([-b_{ik}]_{+}-\delta_{ik})-h\lambda_{jt}}-\omega_{kj}q^{\frac{1}{2}\lambda_{kj}+\sum \limits_{i=k+1}^{m}\lambda_{ji}[-b_{ik}]_{+}})\prod \limits_{i=1}^{m}X_{i}^{[-b_{ik}]_{+}-\delta_{ik}}X_{j}
\end{equation*}
Therefore,
\begin{equation*}
\begin{array}{ll}
\left\{X_{k}^{\prime},X_{j}\right\}=&q^{\frac{1}{2}\sum \limits_{l>h}\lambda_{lh}([b_{lk}]_{+}-\delta_{lk})([b_{hk}]_{+}-\delta_{hk})}(\sum \limits_{b_{tk}>0}\sum\limits_{h=1}^{[b_{tk}]_{+}}\omega_{tj}q^{\frac{1}{2}\lambda_{jt}+ \sum\limits_{i=t}^{m}\lambda_{ji}([b_{ik}]_{+}-\delta_{ik})-h\lambda_{jt}}\\
& \quad\quad\quad\quad\quad\quad\quad\quad\quad\quad\quad\quad\quad\quad\quad\quad\quad-\omega_{kj}q^{\frac{1}{2}\lambda_{kj}+\sum \limits_{i=k+1}^{m}\lambda_{ji}[b_{ik}]_{+}})\prod \limits_{i=1}^{m}X_{i}^{[b_{ik}]_{+}-\delta_{ik}}X_{j}\\
&+q^{\frac{1}{2}\sum \limits_{l>h}\lambda_{lh}([-b_{lk}]_{+}-\delta_{lk})([-b_{hk}]_{+}-\delta_{hk})}(\sum \limits_{b_{tk}<0}\sum\limits_{h=1}^{[-b_{tk}]_{+}}\omega_{tj}q^{\frac{1}{2}\lambda_{jt}+ \sum\limits_{i=t}^{m}\lambda_{ji}([-b_{ik}]_{+}-\delta_{ik})-h\lambda_{jt}} \\
& \quad\quad\quad\quad\quad\quad\quad\quad\quad\quad\quad\quad\quad\quad\quad\quad-\omega_{kj}q^{\frac{1}{2}\lambda_{kj}+\sum \limits_{i=k+1}^{m}\lambda_{ji}[-b_{ik}]_{+}})\prod \limits_{i=1}^{m}X_{i}^{[-b_{ik}]_{+}-\delta_{ik}}X_{j}
\end{array}
\end{equation*}
$ =(\sum \limits_{b_{tk}>0}\sum\limits_{h=1}^{[b_{tk}]_{+}}\omega_{tj}q^{\frac{1}{2}\lambda_{jt}+\sum
\limits_{i=t}^{m}\lambda_{ji}([b_{ik}]_{+}-\delta_{ik})-h\lambda_{jt}}-\omega_{kj}q^{\frac{1}{2}\lambda_{kj}+\sum \limits_{i=k+1}^{m}\lambda_{ji}[b_{ik}]_{+}})X^{-e_{k}+\sum \limits_{b_{ik}>0}b_{ik}e_{i}}X_{j}$
$$
+(\sum \limits_{b_{tk}<0}\sum\limits_{h=1}^{[-b_{tk}]_{+}}\omega_{tj}q^{\frac{1}{2}\lambda_{jt}+\sum
\limits_{i=t}^{m}\lambda_{ji}([-b_{ik}]_{+}-\delta_{ik})-h\lambda_{jt}}-\omega_{kj}q^{\frac{1}{2}\lambda_{kj}+\sum \limits_{i=k+1}^{m}\lambda_{ji}[-b_{ik}]_{+}})X^{-e_{k}-\sum \limits_{b_{ik}<0}b_{ik}e_{i}}X_{j}.\;\;\;\;(\spadesuit)$$

On the other hand,
\begin{equation}\label{omega}
\omega_{kj}^{\prime}{X^{\prime}}^{e_{k}+e_{j}}
=\omega_{kj}^{\prime}q^{\frac{1}{2}\lambda_{jk}^{\prime}}X_{k}^{\prime}X_{j}
=\omega_{kj}^{\prime}q^{\frac{1}{2}(\sum \limits_{t=1}^{m}[b_{tk}]_{+}\lambda_{jt}-\lambda_{jk})}(X^{-e_{k}+\sum \limits_{b_{ik}>0}b_{ik}e_{i}}+X^{-e_{k}-\sum \limits_{b_{ik}<0}b_{ik}e_{i}})X_{j}.
\end{equation}

Since $\{X_{k}^{\prime},X_{j}\}=\omega_{kj}^{\prime}{X^{\prime}}^{e_{k}+e_{j}}$ and the fact that
cluster Laurent monomials in a cluster $X$ are $\mathbb{Z}[q^{\pm \frac{1}{2}}]$-linear independent, we compare the coefficients of the corresponding cluster Laurent monomials in ($\spadesuit$) and the right-side of (\ref{omega}), it follows that

$\sum \limits_{b_{tk}>0}\sum\limits_{h=1}^{[b_{tk}]_{+}}\omega_{tj}q^{\frac{1}{2}\lambda_{jt}+\sum
\limits_{i=t}^{m}\lambda_{ji}([b_{ik}]_{+}-\delta_{ik})-h\lambda_{jt}}-\omega_{kj}q^{\frac{1}{2}\lambda_{kj}+\sum \limits_{i=k+1}^{m}\lambda_{ji}[b_{ik}]_{+}}= \omega_{kj}^{\prime}q^{\frac{1}{2}(\sum \limits_{t=1}^{m}[b_{tk}]_{+}\lambda_{jt}-\lambda_{jk})},$ \\
also, $\sum \limits_{b_{tk}<0}\sum\limits_{h=1}^{[-b_{tk}]_{+}}\omega_{tj}q^{\frac{1}{2}\lambda_{jt}+\sum
\limits_{i=t}^{m}\lambda_{ji}([-b_{ik}]_{+}-\delta_{ik})-h\lambda_{jt}}-\omega_{kj}q^{\frac{1}{2}\lambda_{kj}+\sum \limits_{i=k+1}^{m}\lambda_{ji}[-b_{ik}]_{+}}= \omega_{kj}^{\prime}q^{\frac{1}{2}(\sum \limits_{t=1}^{m}[b_{tk}]_{+}\lambda_{jt}-\lambda_{jk})}.$

So (\ref{**}) is satisfied.

(2)\; It can also be seen from above equations that when (\ref{**}) is satisfied, $\omega_{kj}^{\prime}=q^{\frac{1}{2}(\lambda_{jk}-\sum \limits_{t=1}^{m}[b_{tk}]_{+}\lambda_{jt})}H$. Similar for $\omega_{ik}^{\prime}$. And as $X_{i},X_{j}$ do not change in mutation at direction $k$ when $i,j\neq k$, $\omega_{ij}^{\prime}=\omega_{ij}$.
\end{proof}

\begin{Remark}
A cluster algebra can be regarded as a quantum cluster algebra with $q=1$ or $\Lambda=0$. Then we can see in this case that above mutation formula of Poisson matrices coincides with the mutation formula of Poisson matrices for a cluster algebra in (\ref{non-quantum omega mutation}).
\end{Remark}

{\bf In the sequel, we will always use $\frac{r}{s}=\frac{u}{v}$ to represent $rv=su$ no matter $s$ or $v$ equals to $0$ or not.}
\begin{Lemma}\label{b}
Under the same condition as that of Theorem \ref{pmm}, let $u, v, j\in [1,m]$, $k\in[1,n]$ and $j\neq k$.

(i)\;If $b_{uk}\neq0$, then $\frac{\omega_{uj}}{\omega_{kj}}=\frac{q^{\frac{1}{2}\lambda_{u j}}-q^{\frac{1}{2}\lambda_{ju}}}{q^{\frac{1}{2}\lambda_{kj}}-q^{\frac{1}{2}\lambda_{jk}}}$.

(ii)\;If $b_{uk}b_{vk}\neq0$, then $\frac{\omega_{uj}}{\omega_{vj}}=\frac{q^{\frac{1}{2}\lambda_{uj}}-q^{\frac{1}{2}\lambda_{ju}}}{q^{\frac{1}{2}\lambda_{vj}}- q^{\frac{1}{2}\lambda_{jv}}}$.
\end{Lemma}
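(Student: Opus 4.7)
The plan is to reduce both parts to a single identity obtained from the Leibniz rule applied to the quantum torus commutation $X_uX_k=q^{\lambda_{uk}}X_kX_u$. Notably, equation (\ref{**}) is not actually needed; the argument uses only that $\tilde X$ is log-canonical with respect to $\{-,-\}$, together with the convention $\frac{r}{s}=\frac{u}{v}\iff rv=su$ stated just before the lemma.

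For (i), I would first note that $b_{uk}\ne 0$ together with $b_{kk}=0$ forces $u\ne k$. Applying the Hamiltonian $\{-,X_j\}$ to both sides of $X_uX_k=q^{\lambda_{uk}}X_kX_u$ (using that $q$-scalars are central in the Poisson bracket) yields $\{X_uX_k,X_j\}=q^{\lambda_{uk}}\{X_kX_u,X_j\}$. Next I would expand both sides by the Leibniz rule and the log-canonical relations $\{X_i,X_j\}=\omega_{ij}X^{e_i+e_j}$: the left side becomes $\omega_{uj}X^{e_u+e_j}X_k+\omega_{kj}X_uX^{e_k+e_j}$ and the right side becomes $q^{\lambda_{uk}}\bigl(\omega_{kj}X^{e_k+e_j}X_u+\omega_{uj}X_kX^{e_u+e_j}\bigr)$. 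Then I would use (\ref{1}) to rewrite every resulting monomial as a scalar multiple of the single monomial $M:=X^{e_u+e_k+e_j}$; after cancelling a common factor $q^{\frac{1}{2}\lambda_{uk}}$, equating the coefficients of $M$ produces
\begin{equation*}
\omega_{uj}\,q^{\frac{1}{2}\lambda_{jk}}+\omega_{kj}\,q^{\frac{1}{2}\lambda_{uj}}=\omega_{kj}\,q^{\frac{1}{2}\lambda_{ju}}+\omega_{uj}\,q^{\frac{1}{2}\lambda_{kj}},
\end{equation*}
which rearranges to $\omega_{uj}\bigl(q^{\frac{1}{2}\lambda_{kj}}-q^{\frac{1}{2}\lambda_{jk}}\bigr)=\omega_{kj}\bigl(q^{\frac{1}{2}\lambda_{uj}}-q^{\frac{1}{2}\lambda_{ju}}\bigr)$, i.e.\ exactly (i) under the stated convention on ratios.

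Part (ii) then follows by applying (i) to both $(u,k)$ and $(v,k)$ (using $b_{uk}\ne 0$ and $b_{vk}\ne 0$ respectively) and cross-multiplying the two resulting identities; under the convention on ratios the shared factor involving $\omega_{kj}$ and $q^{\frac{1}{2}\lambda_{kj}}-q^{\frac{1}{2}\lambda_{jk}}$ drops out and leaves the claimed relation between $\omega_{uj}$ and $\omega_{vj}$. The only obstacle in the whole argument is the bookkeeping of $q$-exponents through (\ref{1}); conceptually the lemma is just the statement that the Hamiltonian derivation $\{-,X_j\}$ is forced to respect the quantum torus commutation $X_uX_k=q^{\lambda_{uk}}X_kX_u$, and in particular neither the compatibility (\ref{2}) nor the mutation formula (\ref{omega mutation formula}) enters the proof.
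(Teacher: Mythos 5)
Your computation for part (i) is correct and takes a genuinely different route from the paper's. The paper derives the ratio identities by re-running the expansion from the proof of Theorem \ref{pmm} with two permutations of the factors of $X^{-e_k+[b_k]_+}$ that differ by a transposition of $u$ and $k$, and then subtracting the two resulting instances of (\ref{***}); this is why the paper needs $b_{uk}\neq 0$ (otherwise $X_u$ does not occur in that monomial and the transposition does nothing) and why its argument passes through the log-canonicity of $\mu_k(\tilde X)$. Your argument instead applies the derivation $\{-,X_j\}$ to the torus relation $X_uX_k=q^{\lambda_{uk}}X_kX_u$, using only that the initial cluster is log-canonical and that the bracket is $\mathbb{Z}[q^{\pm\frac{1}{2}}]$-bilinear (which the paper also assumes, since it freely pulls powers of $q$ out of brackets). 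I checked the exponent bookkeeping: each of the four monomials is $q^{\frac{1}{2}\lambda_{uk}}$ times the stated coefficient of $M$, and the identity $\omega_{uj}\bigl(q^{\frac{1}{2}\lambda_{kj}}-q^{\frac{1}{2}\lambda_{jk}}\bigr)=\omega_{kj}\bigl(q^{\frac{1}{2}\lambda_{uj}}-q^{\frac{1}{2}\lambda_{ju}}\bigr)$ follows. Your proof in fact gives more than (i): the hypothesis $b_{uk}\neq 0$ is never used, so this identity holds for every pair of indices, meaning conditions (C$2'$) and (C$3'$) of Theorem \ref{exchange} are already forced by the initial cluster alone being log-canonical. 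This is shorter and conceptually cleaner than the paper's permutation argument, and it also avoids the paper's implicit use of $\{X_k^{-1},X_j\}$.

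The one genuine gap is in your derivation of (ii) from (i). Under the convention $\frac{r}{s}=\frac{u}{v}\Leftrightarrow rv=su$, the relation is not transitive through a middle term of the form $\frac{0}{0}$. Cross-multiplying the two instances of (i) only yields $\omega_{kj}\bigl[\omega_{uj}(q^{\frac{1}{2}\lambda_{vj}}-q^{\frac{1}{2}\lambda_{jv}})\bigr]=\omega_{kj}\bigl[\omega_{vj}(q^{\frac{1}{2}\lambda_{uj}}-q^{\frac{1}{2}\lambda_{ju}})\bigr]$ (or the analogous identity with the common factor $q^{\frac{1}{2}\lambda_{kj}}-q^{\frac{1}{2}\lambda_{jk}}$), and when $\omega_{kj}=0$ and $\lambda_{kj}=0$ both instances of (i) read $0=0$, so nothing can be cancelled and (ii) does not follow. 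The paper meets the same degeneracy: in its proof of (ii) for $b_{uk}b_{vk}<0$ it must treat the sub-case $\lambda_{kj}=b_{uv}=0$ separately by a mutation at $k$. The repair inside your own framework is immediate: since your proof of (i) never used $b_{uk}\neq 0$, run the identical Leibniz computation on $X_uX_v=q^{\lambda_{uv}}X_vX_u$ to obtain (ii) directly, bypassing the index $k$ altogether. With that one-line fix the proposal is a complete proof of the lemma.
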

\begin{proof}
In the proof of Theorem \ref{pmm}, if we choose $\left\{p_{1},\cdots,p_{m}\right\}$ another permutation of $[1,m]$ instead of $[1,m]$ in (\ref{*}), we will finally obtain an equation different from (\ref{**}) as
\begin{equation*}
\left\{X^{-e_{k}+\sum \limits_{b_{ik}>0}b_{ik}e_{i}},X_{j} \right\}
=q^{\frac{1}{2}\sum \limits_{l>h}\lambda_{p_{l}p_{h}}}([b_{p_{l}k}]_{+}-\delta_{p_{l}k})([b_{p_{h}k}]_{+}-\delta_{p_{h}k})\left\{\prod \limits_{i=1}^{m}X_{p_{i}}^{[b_{p_{i}k}]_{+}-\delta_{p_{i}k}},X_{j}\right\}
\end{equation*}
Denote $p^{-1}(k)=i\in [1,m]$ when $p_{i}=k$. We have
\vspace{3mm}

$ \left\{\prod \limits_{i=1}^{m}X_{p_{i}}^{[b_{p_{i}k}]_{+}-\delta_{p_{i}k}},X_{j}\right\} $\\
$ =\sum \limits_{b_{p_{t}k}>0}\prod \limits_{i=1}^{t-1}X_{p_{i}}^{[b_{p_{i}k}]_{+}-\delta_{p_{i}k}}\left\{X_{p_{t}}^{[b_{p_{t}k}]_{+}},X_{j}\right\}\prod \limits_{i=t+1}^{m}X_{p_{i}}^{[b_{p_{i}k}]_{+}-\delta_{p_{i}k}}-\prod \limits_{i=1}^{p^{-1}(k)-1}X_{p_{i}}^{[b_{p_{i}k}]_{+}}\left\{X_{k}^{-1},X_{j}\right\}\prod \limits_{i=p^{-1}(k)+1}^{m}X_{p_{i}}^{[b_{p_{i}k}]_{+}} $ \\
$=\sum \limits_{b_{p_{t}k}>0}\sum \limits_{h=0}^{[b_{p_{tk}}]_{+}-1}\prod \limits_{i=1}^{t-1}X_{p_{i}}^{[b_{p_{i}k}]_{+}-\delta_{p_{i}k}}X_{p_{t}}^{h}\left\{X_{p_{t}},X_{j}\right\}X_{p_{t}}^{[b_{p_{t}k}]_{+}-h-1} \prod \limits_{i=t+1}^{m}X_{p_{i}}^{[b_{p_{i}k}]_{+}-\delta_{p_{i}k}} $\\
$.\;\;\;\;\;\;\;\;\;\;\;\;\;\; \;\;\;\;\;\;\;\;\;\;\;\;\;\;\;\;\;\;\;\;\;\;\;\;\;\;\;\;\;\;\;\;\;\;\;\;\;\;\;\;\;\; -\prod\limits_{i=1}^{p^{-1}(k)-1}X_{p_{i}}^{[b_{p_{i}k}]_{+}}X_{k}^{-1}\left\{X_{k},X_{j}\right\}X_{k}^{-1}\prod \limits_{i=p^{-1}(k)+1}^{m}X_{p_{i}}^{[b_{p_{i}k}]_{+}} $\\
$=\sum \limits_{b_{p_{t}k}>0}\sum \limits_{h=0}^{[b_{p_{tk}}]_{+}-1}\omega_{p_{t}j}\prod \limits_{i=1}^{t-1}X_{p_{i}}^{[b_{p_{i}k}]_{+}-\delta_{p_{i}k}}X_{p_{t}}^{h}X^{e_{p_{t}}+e_{j}}X_{p_{t}}^{[b_{p_{t}k}]_{+}-h-1} \prod \limits_{i=t+1}^{m}X_{p_{i}}^{[b_{p_{i}k}]_{+}-\delta_{p_{i}k}} $\\
$.\;\;\;\;\;\;\;\;\;\;\;\;\;\; \;\;\;\;\;\;\;\;\;\;\;\;\;\;\;\;\;\;\;\;\;\;\;\;\;\;\;\;\;\;\;\;\;\;\;\;\;\;\;\;\;\;
-\omega_{kj}\prod \limits_{i=1}^{p^{-1}(k)-1}X_{p_{i}}^{[b_{p_{i}k}]_{+}}X_{k}^{-1}X^{e_{k}+e_{j}}X_{k}^{-1}\prod \limits_{i=p^{-1}(k)+1}^{m}X_{p_{i}}^{[b_{p_{i}k}]_{+}} $\\
$ =(\sum \limits_{b_{p_{t}k}>0}\omega_{p_{t}j}q^{\frac{1}{2}\lambda_{jp_{t}}}\sum \limits_{h=1}^{[b_{p_{tk}}]_{+}}q^{\sum \limits_{i=t}^{m}([b_{p_{i}k}]_{+}-\delta_{p_{i}k})\lambda_{jp_{i}}-h\lambda_{jp_{t}}}-\omega_{kj}q^{\frac{1}{2}\lambda_{kj}+\sum \limits_{i=p^{-1}(k)+1}^{m}\lambda_{jp_{i}}[b_{p_{i}k}]_{+}})\prod \limits_{i=1}^{m}X_{p_{i}}^{[b_{p_{i}k}]_{+}-\delta_{p_{i}k}}. (\clubsuit) $
\vspace{3mm}

Thus we replace ($\clubsuit$) into the first term of the right-side of (\ref{*}) in the proof of Theorem \ref{pmm} and calculate as we did there, it follows that
\begin{align*}
& \left\{X_{k}^{\prime},X_{j} \right\} \\
= & (\sum \limits_{b_{p_{t}k}>0}\sum \limits_{h=1}^{[b_{p_{tk}}]_{+}}\omega_{p_{t}j}q^{\frac{1}{2}\lambda_{jp_{t}}+\sum \limits_{i=t}^{m}\lambda_{jp_{i}}([b_{p_{i}k}]_{+}-\delta_{p_{i}k})-h\lambda_{jp_{t}}}-\omega_{kj}q^{\frac{1}{2}\lambda_{kj}+\sum \limits_{i=p^{-1}(k)+1}^{m}\lambda_{jp_{i}}[b_{p_{i}k}]_{+}})X^{-e_{k}+\sum \limits_{b_{ik}>0}b_{ik}e_{i}}X_{j} \\
& +(\sum \limits_{b_{tk}<0}\sum\limits_{h=1}^{[-b_{tk}]_{+}}\omega_{tj}q^{\frac{1}{2}\lambda_{jt}+\sum
\limits_{i=t}^{m}\lambda_{ji}([-b_{ik}]_{+}-\delta_{ik})-h\lambda_{jt}}-\omega_{kj}q^{\frac{1}{2}\lambda_{kj}+\sum \limits_{i=k+1}^{m}\lambda_{ji}[-b_{ik}]_{+}})X^{-e_{k}-\sum \limits_{b_{ik}<0}b_{ik}e_{i}}X_{j}.
\end{align*}
Again, because of (\ref{omega}) and comparing coefficients of cluster Laurent monomials in $X$ due to their $\mathbb{Z}[q^{\pm \frac{1}{2}}]$-linear independence, analogue to the proof of Theorem \ref{pmm} (1), we get that

\begin{equation}\label{***}
\begin{array}{l}
\sum \limits_{b_{p_{t}k}>0}(\omega_{p_{t}j}q^{\frac{1}{2}\lambda_{jp_{t}}}\sum \limits_{h=1}^{[b_{p_{tk}}]_{+}}q^{\sum \limits_{i=t}^{m}([b_{p_{i}k}]_{+}-\delta_{p_{i}k})\lambda_{jp_{i}}-h\lambda_{jp_{t}}})-\omega_{kj}q^{\frac{1}{2}\lambda_{kj}+\sum \limits_{i=p^{-1}(k)+1}^{m}\lambda_{jp_{i}}[b_{p_{i}k}]_{+}}\\
=\sum \limits_{b_{tk}<0}(\omega_{tj}q^{\frac{1}{2}\lambda_{jt}}\sum \limits_{h=1}^{[-b_{tk}]_{+}}q^{\sum \limits_{i=t}^{m}([-b_{ik}]_{+}-\delta_{ik})\lambda_{ji}-h\lambda_{jt}})-\omega_{kj}q^{\frac{1}{2}\lambda_{kj}+\sum \limits_{i=k+1}^{m}\lambda_{ji}[-b_{ik}]_{+}}
\end{array}
\end{equation}

Now we can prove (i) in the case $b_{uk}>0$. The other case is similar.

In (\ref{***}), choose the permutation $(p_{1},\cdots,p_{m})$ to be $(1,\cdots,\hat{u},\cdots,\hat{k},\cdots,m,u,k)$ and $(1,\cdots,\hat{u},\cdots,\hat{k},$ $\cdots,m,k,u)$ respectively ($\hat{u}$ means the absence of $u$, etc.), we get two equations whose right-sides are the same as that of (\ref{***}). Subtracting these two equations, we have:
\begin{equation*}
\omega_{u j}q^{\frac{1}{2}\lambda_{ju}}\sum \limits_{h=1}^{b_{u k}}q^{(b_{u k}-h)\lambda_{ju}-\lambda_{jk}}-\omega_{k j}q^{\frac{1}{2}\lambda_{kj}}
=\omega_{u j}q^{\frac{1}{2}\lambda_{ju}}\sum \limits_{h=1}^{b_{u k}}q^{(b_{u k}-h)\lambda_{ju}}-\omega_{k j}q^{\frac{1}{2}\lambda_{kj}+b_{u k}\lambda_{ju}}
\end{equation*}
Therefore, we have
$$ \frac{\omega_{u j}}{\omega_{kj}}
=\frac{q^{\frac{1}{2}\lambda_{kj}}(1-q^{b_{u k}\lambda_{ju}})}{q^{\frac{1}{2}\lambda_{ju}}\sum \limits_{h=1}^{b_{u k}}q^{(b_{u k}-h)\lambda_{ju}}(q^{-\lambda_{jk}}-1)}
=\frac{q^{-\frac{1}{2}\lambda_{j u}}(1-q^{\lambda_{ju}})(1-q^{b_{u k}\lambda_{ju}})}{q^{-\frac{1}{2}\lambda_{jk}}(1-q^{b_{u k}\lambda_{ju}})(q_{-\lambda_{jk}}-1)}
=\frac{q^{\frac{1}{2}\lambda_{uj}}-q^{\frac{1}{2}\lambda_{ju}}}{q^{\frac{1}{2}\lambda_{kj}}-q^{\frac{1}{2}\lambda_{jk}}}. $$

Next, we prove (ii) in the case $b_{u k}b_{vk}>0$. We only proof the case $b_{u k},b_{vk}>0$, the other case is similar.

Similarly, in (\ref{***}), choose the permutation $(p_{1},\cdots,p_{m})$ to be
$(1,\cdots,\hat{u},\cdots,\hat{v},\cdots,m,u,v)$ and $(1,\cdots,\hat{u},\cdots,\hat{v},$ $\cdots,m,v,u)$
respectively, we get two equations whose right-sides are the same as that of (\ref{***}). Subtracting these two equations, we have:

$$\omega_{u j}q^{\frac{1}{2}\lambda_{ju}}\sum \limits_{h=1}^{b_{u k}}q^{(b_{u k}-h)\lambda_{ju}+b_{v k}\lambda_{jv}}+\omega_{v j}q^{\frac{1}{2}\lambda_{jv}}\sum \limits_{h=1}^{b_{vk}}q^{(b_{vk}-h)\lambda_{jv}}$$

$$=\omega_{u j}q^{\frac{1}{2}\lambda_{ju}}\sum \limits_{h=1}^{b_{u k}}q^{(b_{u k}-h)\lambda_{ju}}+\omega_{v j}q^{\frac{1}{2}\lambda_{jv}}\sum \limits_{h=1}^{b_{vk}}q^{(b_{vk}-h)\lambda_{jv}+b_{u k}\lambda_{ju}}.$$
Thus,
$$
\frac{\omega_{u j}}{\omega_{vj}}
=\frac{q^{\frac{1}{2}\lambda_{jv}}\sum \limits_{h=1}^{b_{vk}}q^{(b_{vk}-h)\lambda_{jv}}(q^{b_{u k}\lambda_{ju}}-1)}{q^{\frac{1}{2}\lambda_{ju}}\sum \limits_{h=1}^{b_{u k}}q^{(b_{u k}-h)\lambda_{ju}}(q^{b_{v k}\lambda_{jv}}-1)}
=\frac{q^{-\frac{1}{2}\lambda_{j u}}(1-q^{b_{vk}\lambda_{jv}})(q_{b_{u k}\lambda_{ju}}-1)(1-q^{\lambda_{ju}})}{q^{-\frac{1}{2}\lambda_{j v}}(1-q^{b_{u k}\lambda_{ju}})(q_{b_{v k}\lambda_{jv}}-1)(1-q^{\lambda_{jv}})}
=\frac{q^{\frac{1}{2}\lambda_{u j}}-q{\frac{1}{2}\lambda_{ju}}}{q^{\frac{1}{2}\lambda_{vj}}-q^{\frac{1}{2}\lambda_{jv}}}. $$

Moreover, consider the case for $b_{uk}b_{vk}<0$. If $\lambda_{kj}\neq 0$ or $b_{uv}\neq 0$, then we can obtain the result we want by the first part of this Lemma. If $\lambda_{kj}=b_{uv}=0$, take a mutation at direction $k$. After mutation, $\lambda^{\prime}_{uj}=\lambda_{uj}$, $\lambda_{vj}^{\prime}=\lambda_{vj}$, $\omega_{uj}^{\prime}=\omega_{uj}$ and $\omega_{vj}^{\prime}=\omega_{vj}$, but $b_{uv}^{\prime}=b_{uk}b_{kv}\neq 0$. Hence again by the first part (i) of this lemma, we have
$ \frac{\omega_{u j}}{\omega_{vj}}=\frac{q^{\frac{1}{2}\lambda_{u j}}-q{\frac{1}{2}\lambda_{ju}}}{q^{\frac{1}{2}\lambda_{vj}}-q^{\frac{1}{2}\lambda_{jv}}}. $
\end{proof}

\begin{Remark}\label{b2}
It follows from Lemma \ref{b} that
\begin{enumerate}
\item If $\lambda_{kj}\neq 0$, then $\omega_{u j}=a(q^{\frac{1}{2}\lambda_{u j}}-q^{\frac{1}{2}\lambda_{ju}})$, where $a\in \mathbb{Z}[q^{\pm \frac{1}{2}}]$ for any $b_{u k}\neq 0$.
\item If $\lambda_{vj}\neq 0$ and $b_{vk}\neq 0$, then $\omega_{u j}=a(q^{\frac{1}{2}\lambda_{u j}}-q^{\frac{1}{2}\lambda_{ju}})$, where $a\in \mathbb{Z}[q^{\pm \frac{1}{2}}]$ for any $ b_{u k}\neq 0$.
\end{enumerate}
\end{Remark}

The following theorem turns (\ref{**}) equivalently into the collection of three conditions (C$2'$), (C$3'$), (C$4'$), which are easier to deal with for us.
\begin{Theorem}\label{exchange}
If $\tilde{X}$ is log-canonical with a Poisson structure $\left\{-,-\right\}$ on a quantum cluster algebra $A_q$ and $\left\{X_{i},X_{j} \right\}=\omega_{ij}X^{e_{i}+e_{j}}$ for any $i,j\in[1,m]$, then $\mu_{k}(\tilde{X})$ is log-canonical with it if and only if the following conditions hold for any $j\in[1,m],k\in[1,n],k\neq j$:\\
\begin{itemize}
\item[(C$2'$)] For any $u\in [1,m]$, if $b_{uk}\neq0$, then $\frac{\omega_{uj}}{\omega_{kj}}=\frac{q^{\frac{1}{2}\lambda_{u j}}-q^{\frac{1}{2}\lambda_{ju}}}{q^{\frac{1}{2}\lambda_{kj}}-q^{\frac{1}{2}\lambda_{jk}}}$.
\item[(C$3'$)] For any $u,v\in [1,m]$, if $b_{uk}b_{vk}\neq 0$, then $\frac{\omega_{uj}}{\omega_{vj}}=\frac{q^{\frac{1}{2}\lambda_{uj}}-q^{\frac{1}{2}\lambda_{ju}}}{q^{\frac{1}{2}\lambda_{vj}}- q^{\frac{1}{2}\lambda_{jv}}}$.
\item[(C$4'$)] $\sum\limits_{t:\lambda_{tj}=0}\omega_{tj}b_{tk}=0$.
\end{itemize}
\end{Theorem}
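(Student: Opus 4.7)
The plan is to establish that, under the standing hypothesis that $\tilde X$ is log-canonical with respect to $\{-,-\}$, the single identity (\ref{**}) from Theorem \ref{pmm}(1) is equivalent to the conjunction of (C$2'$), (C$3'$), (C$4'$); coupled with Theorem \ref{pmm}(1), this gives the theorem.

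For the necessity direction, assume $\mu_k(\tilde X)$ is log-canonical, so (\ref{**}) holds by Theorem \ref{pmm}(1). Conditions (C$2'$) and (C$3'$) are precisely the conclusions of Lemma \ref{b}(i) and (ii). To extract (C$4'$), I first rewrite the inner sum in (\ref{**}). Since $t\neq k$ whenever $b_{tk}\neq 0$, the exponent $\sum_{i=t}^m([b_{ik}]_+-\delta_{ik})\lambda_{ji}-h\lambda_{jt}$ equals $([b_{tk}]_+-h)\lambda_{jt}+\sum_{i=t+1}^m([b_{ik}]_+-\delta_{ik})\lambda_{ji}$, so the inner sum factors as $q^{\sum_{i=t+1}^m([b_{ik}]_+-\delta_{ik})\lambda_{ji}}\cdot\sum_{h'=0}^{[b_{tk}]_+-1}q^{h'\lambda_{jt}}$, which equals $[b_{tk}]_+$ when $\lambda_{jt}=0$ and is a standard geometric series otherwise. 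Substituting (C$2'$) to rewrite every $\omega_{tj}$ with $\lambda_{jt}\neq 0$ in terms of $\omega_{kj}$, the geometric sums collapse and the contributions to (\ref{**}) from such $t$ on the two sides cancel. What remains is a linear combination of the $\omega_{tj}$ for $t$ with $\lambda_{jt}=0$, whose coefficients after factoring out a common nonzero $q^{\ast}$ are $b_{tk}$ on the left and $-b_{tk}$ on the right; moving everything to one side yields $\sum_{t:\lambda_{tj}=0}\omega_{tj}b_{tk}=0$, i.e.\ (C$4'$).

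For sufficiency, assume (C$2'$), (C$3'$), (C$4'$) and reverse the above computation. Split each side of (\ref{**}) into the part indexed by $\{t:\lambda_{jt}\neq 0\}$ and the part indexed by $\{t:\lambda_{jt}=0\}$. The first part is rewritten through (C$2'$) (and through (C$3'$) in the subcase where $\omega_{kj}=0$ but some $\omega_{vj}\neq 0$, or in the case $\lambda_{kj}=0$ handled by the mutation trick at the end of the proof of Lemma \ref{b}) so that the same closed-form identity now produces equal contributions on both sides of (\ref{**}). The second part contributes precisely $\sum_{t:\lambda_{jt}=0}\omega_{tj}b_{tk}$ up to a common factor, and this vanishes by (C$4'$). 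Hence (\ref{**}) holds, and by Theorem \ref{pmm}(1) in the reverse reading, $\mu_k(\tilde X)$ is log-canonical.

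The main obstacle is not conceptual but combinatorial: one must carefully track the $q^{\sum_i(\cdots)\lambda_{ji}}$ prefactors so that the geometric-series contributions on the $b_{tk}>0$ and $b_{tk}<0$ sides actually match after substitution, and one must handle cleanly the degenerate subcases (e.g.\ $\lambda_{kj}=0$) where the ratios in (C$2'$) must be read through the convention $\tfrac{r}{s}=\tfrac{u}{v}\Leftrightarrow rv=su$ fixed in the text. The substantive content of the theorem is that the three a priori independent types of constraints hidden in the single equation (\ref{**})---proportionality of $\omega_{tj}$ to $\omega_{kj}$, pairwise proportionality among the $\omega_{tj}$, and a linear dependence in the $\lambda_{tj}=0$ locus---are already a complete replacement for (\ref{**}).
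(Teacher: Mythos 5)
Your proposal is correct and follows essentially the same route as the paper: both directions reduce to showing that, under log-canonicity of $\tilde X$ and with Lemma \ref{b} supplying (C$2'$) and (C$3'$), equation (\ref{**}) is equivalent to (C$4'$), with Theorem \ref{pmm} providing the link between (\ref{**}) and log-canonicity of $\mu_{k}(\tilde{X})$. The only difference is presentational: you extract (C$4'$) by a single geometric-series/telescoping substitution, whereas the paper runs a five-case analysis according to which of $\lambda_{kj}$, $\lambda_{uj}$ (for $b_{uk}>0$), $\lambda_{vj}$ (for $b_{vk}<0$) vanish --- both come down to the observation that (C$2'$)/(C$3'$) force $\omega_{tj}=0$ whenever $\lambda_{tj}=0$ and $b_{tk}\neq 0$ except in the fully degenerate case, where (\ref{**}) collapses directly to (C$4'$).
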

\begin{proof}
For the necessary part, Lemma \ref{b} claim the first two conditions. Combining these with equations (\ref{**}), we can reach the third one case by case:

Case 1: $\lambda_{kj}\neq 0$. Then $\omega_{tj}=0$ if $b_{tk}\neq 0$ and $\lambda_{tj}=0$, thus $\sum\limits_{\lambda_{tj}=0}\omega_{tj}b_{tk}=0$.

Case 2: $\lambda_{kj}= 0$ and there are $u,v\in[1,m]$ such that $b_{uk}>0,b_{vk}<0,\lambda_{uj}\lambda_{vj}\neq 0$. Like case 1, $\omega_{tj}=0$ if $b_{tk}\neq 0$ and $\lambda_{tj}=0$.

Case 3: $\lambda_{kj}= 0$, there is $u$ such that $b_{uk}>0,\lambda_{uj}\neq 0$ and for any $v$ such that $b_{vk}<0$, $\lambda_{vj}=0$. Then $\omega_{tj}=0$ if $b_{tk}>0$ and $\lambda_{tj}=0$ and equations (\ref{**}) can be simplified as
\[a_{u}(1-q^{\sum\limits_{t=1}^{m}[b_{tk}]_{+}\lambda_{jt}})=\sum\limits_{t}\omega_{tj}[-b_{tk}]_{+}\]
Because $\sum\limits_{t=1}^{m}[b_{tk}]_{+}\lambda_{jt}=\sum\limits_{t}b_{tk}\lambda_{jt}=0$, we have $\sum\limits_{t}\omega_{tj}[-b_{tk}]_{+}=0$. Therefore $\sum\limits_{\lambda_{tj}=0}\omega_{tj}b_{tk}=0$.

Case 4: $\lambda_{kj}= 0$, there is $v$ such that $b_{vk}<0,\lambda_{vj}\neq 0$ and for any $u$ such that $b_{uk}>0$, $\lambda_{uj}=0$. Similar to case 3.

Case 5: $\lambda_{kj}= 0$ and for any $u$ such that $b_{uk}\neq 0$, $\lambda_{uj}=0$. Then (\ref{**}) looks like
\[\sum\limits_{t}\omega_{tj}[b_{tk}]_{+}=\sum\limits_{t}\omega_{tj}[-b_{tk}]_{+}\]
thus $\sum\limits_{\lambda_{tj}=0}\omega_{tj}b_{tk}=0$.

The sufficent part can be seen by direct calculations. Once these conditions are true, the formula (\ref{**}) holds.
\end{proof}
\vspace{4mm}

\section{Philosophy of second quantization for quantum cluster algebras}

For the family of Poisson matrices $\Psi(t)$ ($t\in \mathbb T_n$) of a Poisson structure on a cluster algebra $A$, it is interesting to note that their mutation formula in (\ref{non-quantum omega mutation}) is the same as that in (\ref{lambdamutation}) for the deformation matrices $\Lambda(t)$ of a quantum cluster algebra $A_q$. From this fact, the relation between the Poisson structures of a cluster algebra and the quantization of this algebra can be given below.

For a quantum cluster algebra $A_q=A_{q}(\Sigma)$ with seeds $\Sigma(t)=(X(t),\tilde{B}(t),\Lambda(t))$ at $t\in \mathbb{T}_{n}$, whose CCV is the cluster algebra $A$, let $\Psi(t)=\Lambda(t)$, then we obtain a compatible Poisson structure on $A$ with Poisson matrices $\Psi(t)$. This compatible Poisson structure on $A$ is given by the family of the deformation matrices $\Lambda(t)$ of $A_q$.

Conversely, assume $\Psi(t)$ ($t\in \mathbb{T}_n$) are the family of Poisson matrices of a compatible Poisson structure of a cluster algebra $A$. Let $\Lambda(t)=\Psi(t)$, according to Theorem 3.2 (i) and (iii) in \cite{N}, $(B(t), \Lambda(t))$ is a compatible pair satisfying the condition (\ref{2}). Then we obtain a quantum cluster algebra $A_q$ as the CQV of $A$.

By the above discussion, we have the following statement:
\begin{Observation} Assume $ A$ is a cluster algebra with exchange matrices $\tilde{B}(t)$ which are of full column rank. Then, we have the following one-by-one correspondence:

.\;\;$\{$Compatible Poisson structures of $A$$\}$\;

.\;\;\;\;\;\;\;\;\;\;\;\;\;\;\;\;\;\;\;\;\;\;\;\;\;\;\;\;\;\;\;\;\;\;\;\;\;$\longleftrightarrow$\; $\{$Quantizations of $A\}=\{$quantum cluster algebras as CQV's of $A\}$\\
via

.\;\;$\{$Poisson matrices of $A \}\;=\;\{$Deformation matrices of $A_q \}$.

\end{Observation}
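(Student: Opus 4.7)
The plan is to exhibit the two candidate maps explicitly and verify that they are mutually inverse, well-defined assignments. The entire argument rests on a pair of literal coincidences already established in the paper: the mutation formula (\ref{non-quantum omega mutation}) for a Poisson matrix $\Psi(t)$ of a compatible Poisson structure on $A$ agrees term-by-term with the mutation formula (\ref{lambdamutation}) for a deformation matrix $\Lambda(t)$ of a quantum cluster algebra $A_q$; and in both contexts the compatibility with the exchange data takes the common form $\tilde B(t)^{\top} M(t)=(D\;\;0)$.

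First I would treat the forward direction $\Psi \mapsto \Lambda$. Given a compatible Poisson structure on $A$ with family $\{\Psi(t)\}_{t\in \mathbb{T}_n}$, set $\Lambda(t_0):=\Psi(t_0)$ at a fixed initial vertex $t_0$. Then $\Lambda(t_0)$ is skew-symmetric because Poisson matrices are, and the pair $(\tilde B(t_0),\Lambda(t_0))$ satisfies (\ref{2}) because by hypothesis $\tilde B(t_0)^{\top}\Psi(t_0)=(D\;\;0)$; the full column rank assumption on $\tilde B(t_0)$ makes this a genuine compatible pair in the sense of \cite{BZ}, so a quantum seed is well defined at $t_0$. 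Propagating via iterated quantum mutation (\ref{lambdamutation}) produces a family $\{\Lambda(t)\}_{t\in\mathbb{T}_n}$, and because (\ref{lambdamutation}) is literally (\ref{non-quantum omega mutation}), one has $\Lambda(t)=\Psi(t)$ for every $t$; in particular the construction is independent of the choice of path in $\mathbb{T}_n$. Letting $q\to 1$ recovers $A$ by construction, so the resulting quantum cluster algebra $A_q$ is a CQV of $A$.

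For the backward direction $\Lambda \mapsto \Psi$, given a CQV $A_q$ of $A$ with family of deformation matrices $\{\Lambda(t)\}$, set $\Psi(t):=\Lambda(t)$. The family $\{\Psi(t)\}$ is skew-symmetric, satisfies $\tilde B(t)^{\top}\Psi(t)=(D\;\;0)$ at every $t$ because each $(\tilde B(t),\Lambda(t))$ is a compatible pair in the quantum setting, and transforms under mutation according to (\ref{non-quantum omega mutation}) by the coincidence of the two formulas. I would then invoke Theorem 3.2 of \cite{N}, which asserts that such data uniquely determines a compatible Poisson structure on $A$ whose Poisson matrices are exactly the prescribed $\Psi(t)$. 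The two assignments are mutual inverses by construction, so the correspondence is bijective.

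The step requiring the most care is verifying consistency of the iterated mutation of $\Lambda(t_0)$ over the whole tree $\mathbb{T}_n$, that is, showing that the quantum seeds obtained along different paths from $t_0$ to the same vertex agree, which amounts to checking that (\ref{lambdamutation}) preserves the relation (\ref{2}) at every step. This is precisely what the established theory of compatible pairs from \cite{BZ} guarantees, so no new computation is needed; the full column rank hypothesis in the statement is exactly the price paid to apply that theory on the quantum side, and without it the backward map could produce skew-symmetric families that fail to extend to any quantization.
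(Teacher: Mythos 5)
Your proposal is correct and follows essentially the same route as the paper: the paper's justification for this Observation is exactly the identification $\Psi(t)=\Lambda(t)$ in both directions, resting on the coincidence of the mutation formulas (\ref{non-quantum omega mutation}) and (\ref{lambdamutation}), the common compatibility condition $\tilde B(t)^{\top}M(t)=(D\;\,0)$, and Theorem 3.2 of \cite{N}. Your additional care about path-independence over $\mathbb{T}_n$ is a sensible elaboration but not a different argument.
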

Motivated by this observation, for a quantum cluster algebra $A_q$, if we have a (non-trivial) compatible Poisson structure on it, would it be possible to find an algebra as a further quantization of $A_q$ whose {\em like-compatible pairs} are correspondent to the family of Poisson matrices of the Poisson structure of $A_q$? We will call this possible further quantization of $A_q$ as the {\em second quantization} of $A_q$.

This is the reason we want to find out the (non-trivial) compatible Poisson structure on a quantum cluster algebra $A_q$.

In this section, we would like to give the exact definition of the so-called second quantization of $A_q$. \vspace{2mm}

We introduce the $q$-analog of an integer $a$ which is $[a]_{q}=\frac{q^{a}-q^{-a}}{q-q^{-1}}\in\mathbb{N}(q^{\pm 1})$ for $q\in\mathbb C$. Given a deformation matrix $\Lambda$ and a Poisson matrix $\Omega$, we can define an $m\times m$ skew-symmetric matrix $W(t)=(W_{ij})$ as
\begin{equation}\label{IImatrix}
W_{ij}=\left\{
\begin{array}{ll}
\frac{\omega_{ij}\lambda_{ij}}{[\lambda_{ij}]_{q^{\frac{1}{2}}}} & \lambda_{ij}\neq 0\\
\omega_{ij} & \lambda_{ij}=0.
\end{array}
\right .
\end{equation}
We call $W(t)$ the {\bf second deformation matrix} at $t$.

From this definition we know that any two of $\Omega(t)$, $\Lambda(t)$ and $W(t)$ can determine the other one. And the three conditions in Theorem \ref{exchange} can also be restated for any $j\in[1,m],k\in[1,n],k\neq j$ as following:
\begin{itemize}
\item[(C2)] For any $u\in [1,m]$, if $b_{uk}\neq0$, then $\frac{W_{uj}}{W_{kj}}=\frac{\lambda_{u j}}{\lambda_{kj}}$.
\item[(C3)] For any $u,v\in [1,m]$, if $b_{uk}b_{vk}\neq 0$, then $\frac{W_{uj}}{W_{vj}}=\frac{\lambda_{uj}}{\lambda_{vj}}$.
\item[(C4)] $\sum\limits_{t:\lambda_{tj}=0}W_{tj}b_{tk}=0$.
\end{itemize}

\begin{Definition}\label{triple}
For $t\in\mathbb T_n$, let $\tilde{B}(t)=(b_{ij})$ be an $m\times n$ integer matrix with $m\geqslant n$, $\Lambda(t)=(\lambda_{ij})$ an $m\times m$ integer skew-symmetric matrix and $W(t)=(W_{ij})$ an $m\times m$ skew-symmetric integer matrix. The triple $(\tilde{B}(t),\Lambda(t),W(t))$ is called \textbf{compatible} if $(\tilde{B}(t),\Lambda(t))$ is a compatible pair satisfying $(C1)$ and any triple mutation equivalent to $(\tilde{B}(t),\Lambda(t),W(t))$ satisfies $(C2),(C3)$ and $(C4)$.
\end{Definition}
Recall that by Theorem \ref{exchange}, the latter condition including $(C2),(C3)$ and $(C4)$ is equivalent to that the Poisson structure induced by $\Omega(t)$ is compatible with the quantum cluster algebra $A_q$ associated to the compatible pair $(\tilde{B}(t),\Lambda(t))$.

As usual we define the {\bf extended cluster} at $t\in\mathbb{T}_{n}$ to be a set of variables
\begin{equation*}
\tilde{Y}(t)=\left\{Y_{t}^{e_{1}},Y_{t}^{e_{2}},\cdots,Y_{t}^{e_{n}},Y^{e_{n+1}},\cdots,Y^{e_{m}}\right\},
\end{equation*}
where $e_{i}\in \mathbb{Z}^{m}$ are the standard basis. And the set of first $n$ variables is called the \textbf{cluster} at $t$ and denoted by $Y(t)$. For $p,q\in\mathbb C$, let $\mathcal{T}_{t}$ be the $\mathbb{Z}[p^{\pm\frac{1}{2}},q^{\pm\frac{1}{2}}]$-algebra generated by $\tilde{Y}(t)$ satisfying the relation
\begin{equation}\label{second quantum exchange}
Y_{t}^{e_{i}}Y_{t}^{e_{j}}=p^{\frac{1}{2}W_{ij}}q^{\frac{1}{2}\lambda_{ij}}Y_{t}^{e_{i}+e_{j}},\forall i,j\in[1,m],
\end{equation}
We call $\mathcal T_t$ the {\bf II-quantum torus}, or say, {\bf $(p,q)$-quantum torus at $t$}.

Denote by $\mathcal{F}_{p,q}$ the skew-field of fractions of $\mathcal{T}_{t}$. Thus, $\mathcal{T}_t$ is a subalgebra of $\mathcal F_{p,q}$.

We can see that
\[ Y_{t}^{e_{i}}Y_{t}^{e_{j}}=p^{W_{ij}}q^{\lambda_{ij}}Y_{t}^{e_{j}} Y_{t}^{e_{i}},\forall i,j\in[1,m].\]
We call $\Sigma(t)=(\tilde{Y}(t),\tilde{B}(t),\Lambda(t),W(t))$ a {\bf II-quantum seed} at $t$ for the compatible triple $(\tilde{B}(t),\Lambda(t),W(t))$.
\begin{Definition}\label{IIm}
Let $\Sigma(t)$ and $\Sigma(t^{\prime})$ be two II-quantum seeds at $t$ and $t^{\prime}$ respectively. Denote by $b_{i}$ the i-column of $\tilde{B}(t)$ as a vector. Assume $t$ and $t^{\prime}$ are adjacent vertices by an edge labeled $k$ in $\mathbb{T}_{n}$. $\Sigma(t^{\prime})$ is defined from $\Sigma(t)$ by a \textbf{mutation in direction $k$} if $\Sigma(t^{\prime})=\mu_{k}(\Sigma(t))=(\mu_{k}(\tilde{Y}(t)),\mu_{k}(\tilde{B}(t)),\mu_{k}(\Lambda(t)),\mu_{k}(W(t)))$, where
\begin{equation*}
\tilde{Y}(t')=\mu_{k}(\tilde{Y}(t))=(\tilde{Y}(t)\setminus\{Y_{t}^{e_{k}}\})\bigcup\{\mu_{k}(Y_{t}^{e_{k}})\},\quad \; Y_{t'}^{e_{k}}=\mu_{k}(Y_{t}^{e_{k}})=Y_{t}^{-e_{k}+[b_{k}(t)]_{+}}+Y_{t}^{-e_{k}+[-b_{k}(t)]_{+}}
\end{equation*}
and
\begin{equation}
W_{ij}(t^{\prime})=\left\{
\begin{array}{lcr}
-W_{kj}(t)+\sum\limits_{l=1}^{m}[b_{lk}(t)]_{+}W_{lj}(t)&&\text{if}\; i=k\neq j\\
-W_{ik}(t)+\sum\limits_{l=1}^{m}[b_{lk}(t)]_{+}W_{il}(t)&&\text{if}\; j=k\neq i\\
W_{ij}(t)&& \text{otherwise}
\end{array}
\right .
\end{equation}
while the mutations of matrices $\tilde{B}(t)$ and $\Lambda(t)$ are the same as those we introduced before in (\ref{matrixmut}) and (\ref{lambdamutation}) respectively.
\end{Definition}

\begin{Theorem}
For any $t\in\mathbb{T}_{n}$ and $k\in[1,n]$, let $\Sigma(t)=(\tilde{Y},\tilde{B},\Lambda,W)$ be a II-quantum seed at $t$, then $\mu_{k}(\Sigma(t))=(\tilde{Y}^{\prime},\tilde{B}^{\prime},\Lambda^{\prime},W^{\prime})$ is also a II-quantum seed. And the Poisson structure associated to $\Sigma(t)$ and $\mu_{k}(\Sigma(t))$ are the same.

\end{Theorem}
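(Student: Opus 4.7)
The theorem has two claims: (a) that $\mu_k(\Sigma(t))$ is again a II-quantum seed, and (b) that the Poisson structures associated to the two seeds agree. For part (a), the pair $(\tilde B',\Lambda')$ satisfies (C1) by the standard theory of quantum cluster algebras recalled in Section 1, and the matrix $W'$ produced by the formula of Definition \ref{IIm} is manifestly skew-symmetric (the cases $i=k\neq j$ and $j=k\neq i$ are symmetric, and $W'_{ij}=W_{ij}$ otherwise). The compatibility of the triple $(\tilde B',\Lambda',W')$ in the sense of Definition \ref{triple} is then automatic, since compatibility is defined as a property of the entire mutation class of a triple, and $(\tilde B',\Lambda',W')$ lies in the same mutation class as the compatible triple $(\tilde B,\Lambda,W)$.

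For part (b), the Poisson bracket associated to a II-quantum seed is the unique bracket on $A_q$ for which $\{Y_i,Y_j\}=\omega_{ij}Y^{e_i+e_j}$, with $\omega_{ij}$ recovered from $(\Lambda,W)$ by (\ref{IImatrix}). Since a Poisson structure is determined by its values on any generating set, to check that $\Sigma(t)$ and $\mu_k(\Sigma(t))$ carry the same Poisson structure it suffices to show that, computing $\{Y_k',Y_j\}$ with the bracket coming from $\Sigma(t)$, we obtain $\omega'_{kj}Y'^{e_k+e_j}$ where $\omega'_{kj}$ is precisely the scalar attached to $(\Lambda',W')$ by (\ref{IImatrix}). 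Theorem \ref{pmm} already supplies the left-hand side: it furnishes the mutation formula (\ref{omega mutation formula}) for $\omega'_{kj}$. Hence the proof reduces to verifying the purely scalar identity
\[\omega'_{kj}=\begin{cases}W'_{kj}[\lambda'_{kj}]_{q^{1/2}}/\lambda'_{kj}&\text{if }\lambda'_{kj}\neq 0,\\ W'_{kj}&\text{if }\lambda'_{kj}=0,\end{cases}\]
with $W'_{kj}=-W_{kj}+\sum_l[b_{lk}]_+W_{lj}$ on the right.

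This identity I will verify by splitting into cases according to the vanishing of $\lambda_{kj}$ and of $\lambda'_{kj}$. When $\lambda_{kj}\neq 0$, condition (C2) gives $W_{lj}=W_{kj}\lambda_{lj}/\lambda_{kj}$ whenever $b_{lk}\neq 0$, while (C4) eliminates the residual contributions with $\lambda_{lj}=0$. Summing then yields $W'_{kj}/\lambda'_{kj}=W_{kj}/\lambda_{kj}$ whenever $\lambda'_{kj}\neq 0$, which after multiplication by $[\lambda'_{kj}]_{q^{1/2}}$ and the relation $\omega_{kj}=W_{kj}[\lambda_{kj}]_{q^{1/2}}/\lambda_{kj}$ matches the simplification of $H$ in (\ref{omega mutation formula}). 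The subcases with $\lambda_{kj}=0$ are treated analogously using (C3) (when some $\lambda_{vj}\neq 0$ with $b_{vk}\neq 0$) together with (C4) (when all such $\lambda_{lj}$ vanish). The main technical obstacle is the bookkeeping demanded by the case distinctions intrinsic to (\ref{IImatrix}) and to the evaluation of $H$ in (\ref{omega mutation formula}); these mirror exactly the case analysis already carried out in the proof of Theorem \ref{exchange}, so the pattern is available and only needs to be transcribed into the present setting.
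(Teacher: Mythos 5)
There is a genuine gap in your part (a). Being a II-quantum seed is not merely a condition on the matrices: by the definition preceding Definition \ref{IIm}, the mutated extended cluster $\tilde{Y}(t')$ must itself generate a $(p,q)$-quantum torus obeying (\ref{second quantum exchange}) with respect to the \emph{mutated} matrices $(\Lambda',W')$. Concretely, one must check that the new variable $Y_{t'}^{e_{k}}=Y_{t}^{-e_{k}+[b_{k}]_{+}}+Y_{t}^{-e_{k}+[-b_{k}]_{+}}$ satisfies
\[
Y_{t'}^{e_{k}}Y^{e_{j}}=p^{W'_{kj}}q^{\lambda'_{kj}}Y^{e_{j}}Y_{t'}^{e_{k}}\qquad\text{for all } j\neq k,
\]
with $W'_{kj}=-W_{kj}+\sum_{l}[b_{lk}]_{+}W_{lj}$ and $\lambda'_{kj}$ as in (\ref{lambdamutation}). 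This is not automatic: the two Laurent monomials in the binomial quasi-commute with $Y^{e_{j}}$ by factors whose exponents differ by $\sum_{i}b_{ik}W_{ij}$ in $p$ and by $\sum_{i}b_{ik}\lambda_{ij}$ in $q$, so the binomial quasi-commutes uniformly (and with the correct factor $p^{W'_{kj}}q^{\lambda'_{kj}}$) only because $\sum_{i}b_{ik}\lambda_{ij}=0$ and $\sum_{i}b_{ik}W_{ij}=0$ for $j\neq k$ --- the first from (C1), the second a consequence of (C2)--(C4) (this is exactly the content of Proposition \ref{BW=D}). This verification is the entire first half of the paper's proof; your observation that compatibility of the matrix triple is mutation-invariant by Definition \ref{triple} is correct but only establishes that $(\tilde{B}',\Lambda',W')$ is a compatible triple, not that $(\tilde{Y}',\tilde{B}',\Lambda',W')$ is a II-quantum seed.

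Your part (b) follows essentially the same route as the paper: the paper likewise compares the Poisson matrix read off from $(\Lambda',W')$ via (\ref{IImatrix}) with the matrix obtained from $\Omega$ by the mutation rule (\ref{omega mutation formula}) of Theorem \ref{pmm}, splitting into the case where $\lambda_{vj}\neq 0$ for some $v$ with $v=k$ or $b_{vk}\neq 0$ (where Remark \ref{b2}, i.e.\ your (C2)/(C3), gives $\omega_{ij}=a[\lambda_{ij}]_{q^{1/2}}$ and hence $W_{ij}=a\lambda_{ij}$ on the relevant indices, so that $W'_{kj}=a\lambda'_{kj}$) and the case where all such $\lambda_{lj}$ vanish (where $W_{lj}=\omega_{lj}$ and (C4) finishes the computation). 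Your case division and the identities you invoke are the same as the paper's; this half is only sketched, but the pattern you point to is the correct one.
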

\begin{proof}
The compatibility of $(\tilde{B},\Lambda,W)$ is mutation invariant by definition, so what left is to prove
\[{Y^{\prime}}^{e_{k}}Y^{e_{j}}=p^{W^{\prime}_{kj}}q^{\lambda^{\prime}_{kj}}Y^{e_{j}}{Y^{\prime}}^{e_{k}},\forall j\in[1,m]\setminus\{k\}.\]

Since ${Y^{\prime}}^{e_{k}}=Y^{-e_{k}+[b_{k}]_{+}}+Y^{-e_{k}+[-b_{k}]_{+}}$, we have
\begin{equation*}
\begin{array}{rl}
{Y^{\prime}}^{e_{k}}Y^{e_{j}}= & (Y^{-e_{k}+[b_{k}]_{+}}+Y^{-e_{k}+[-b_{k}]_{+}})Y^{e_{j}} \\
= & p^{\sum\limits_{i=1}^{m}[b_{ik}]_{+}W_{ij}-W_{kj}}q^{\sum\limits_{i=1}^{m}[b_{ik}]_{+}\lambda_{ij}-\lambda_{kj}}Y^{e_{j}}Y^{-e_{k}+[b_{k}]_{+}} \\
& +p^{\sum\limits_{i=1}^{m}[b_{ik}]_{-}W_{ij}-W_{kj}}q^{\sum\limits_{i=1}^{m}[b_{ik}]_{-}\lambda_{ij}-\lambda_{kj}}Y^{e_{j}}Y^{-e_{k}+[b_{k}]_{-}} \\
= & p^{W^{\prime}_{kj}}q^{\lambda^{\prime}_{kj}}Y^{e_{j}}{Y^{\prime}}^{e_{k}}.
\end{array}
\end{equation*}

Let $\Omega$ be the Poisson matrix associated to $\Sigma(t)$. We need to verify that the Poisson matrix $\Omega^{\prime}$ associated to $\mu_{k}(\Sigma(t))$ is exactly the Poisson matrix $\Omega^{\prime\prime}$ obtained from $\Omega$ by mutation at direction $k$. If $i,j\neq k$, then $\omega_{ij}^{\prime\prime}=\omega_{ij}=\omega_{ij}^{\prime}$. Next assume $i=k\neq j$, the case $i\neq k=j$ is the same.

If any $l\in S=\left\{u\mid u=k \text{ or } b_{uk}\neq 0\right\}$, $\lambda_{lj}=0$, we have $\lambda_{kj}^{\prime}=-\lambda_{kj}+\sum\limits_{l=1}^{m}[b_{lk}]_{+}\lambda_{lj}=0$. Then $W_{ij}=\omega_{ij}$ for any $l\in S$ and $W_{kj}^{\prime}=W_{kj}$. Therefore following (\ref{omega mutation formula})
\[\omega^{\prime\prime}_{kj}=-\omega_{kj}+\sum\limits_{l=1}^{m}[b_{lk}]_{+}\omega_{lj}=-W_{kj}+\sum\limits_{l=1}^{m}[b_{lk}]_{+}W_{lj}=W_{kj}^{\prime}=W_{kj}=\omega_{kj}.\]

If there is $v\in S$ such that $\lambda_{vj}\neq 0$, then by Remark \ref{b2} $\omega_{ij}=a[\lambda_{ij}]_{q^{\frac{1}{2}}}$ for any $i\in S$, where $a\in \mathbb{Z}$. So $W_{ij}=a\lambda_{ij}$ and $W_{kj}^{\prime}=a\lambda_{ij}$. Again by (\ref{omega mutation formula}), it can be checked that
\[\omega_{kj}^{\prime\prime}=a[\lambda_{kj}^{\prime}]_{q^{\frac{1}{2}}},\]
hence $W_{kj}^{\prime\prime}=a\lambda_{kj}^{\prime}=W_{kj}^{\prime}$.

Therefore $\Omega^{\prime}=\Omega^{\prime\prime}$, which means the Poisson structures induced by $\Omega$ and by $\Omega^{\prime}$ are the same.
\end{proof}

\begin{Definition} \label{IIq} For a quantum cluster algebra $A_q$ with a compatible Poisson structure $\{\;,\;\}$,
assign II-quantum seeds $\Sigma(t)$ to every vertex $t$ in $\mathbb{T}_{n}$ so that for any $t$ and $t^{\prime}$ adjacent by an edge labeled $k$, $\Sigma(t^{\prime})$ is obtained from $\Sigma(t)$ by a mutation in direction $k$ by Definition \ref{IIm}. Denote by $A_{p,q}=A_{p,q}(\Sigma)$ the $\mathbb{Z}[p^{\pm\frac{1}{2}},q^{\pm\frac{1}{2}}][Y^{\pm e_{n+1}},\cdots,Y^{\pm e_{m}}]$-subalgebra of $\mathcal{F}_{p,q}$ generated by $\bigcup\limits_{t\in\mathbb{T}_{n}}Y(t)$. We call $A_{p,q}$ the \textbf{secondly quantized cluster algebra} associated to $\{\Sigma(t)\}_{t\in\mathbb{T}_{n}}$, or say, the {\bf second quantization} of $A_q$.
\end{Definition}

Trivially, if $p$ tends to 1 or $q$ tends to 1 or $p$ tends to $q$, then the secondly quantized cluster algebra $A_{p,q}$ degenerates to the quantum cluster algebras $A_q$, $A_p$, $A_q$ with deformation matrix $\Lambda$, $W$, $\Lambda+W$ respectively.

As we said before, any two of $\Omega(t)$, $\Lambda(t)$ and $W(t)$ determine the other one.
Hence, if we are given a secondly quantized cluster algebra $A_{p,q}$ with deformation matrices $\Lambda(t)=(\lambda_{i,j})$ and second deformation matrices $W(t)=(W_{ij})$, then we can obtain the Poisson matrices $\Omega(t)$ of a compatible Poisson structure on $A_q$ via:
\begin{equation*}
\omega_{ij}=\left\{
\begin{array}{ll}
\frac{W_{ij}[\lambda_{ij}]_{q^{\frac{1}{2}}}}{\lambda_{ij}}, & \lambda_{ij}\neq 0\\
W_{ij}, & \lambda_{ij}=0.
\end{array}
\right.
\end{equation*}
Therefore, when $\Lambda(t)$ is fixed, we have the following correspondance:
\begin{Observation} Assume $A_q$ is a quantum cluster algebra with the compatible pairs $(\tilde{B}(t),\Lambda(t))$. Then, we have the following one-by-one correspondence:
\\
$\{$Compatible Poisson structures of $A_q\}$\;
\\
.\;\;\;\;\;\;\;\;\;\;\;\;\;\;\;\;\;\;\;$\longleftrightarrow$\; $\{$Second Quantizations of $A_q\}=\{$Secondly quantized cluster algebras $A_{p,q}$ of $A_q\}$\\
via

.\;\;$\{$Poisson matrices of $A_q \}\;\; \longleftrightarrow\;\;\{$Second deformation matrices of $A_{p,q} \}$.

\end{Observation}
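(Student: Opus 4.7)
The plan is to realize the claimed bijection at the matrix level using the formula (\ref{IImatrix}) and its inverse, verify that the forward and reverse assignments produce valid objects (a secondly quantized cluster algebra and a compatible Poisson structure, respectively), and then note that the two assignments are mutually inverse by construction.

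First I would define the forward map. Given a compatible Poisson structure on $A_{q}$ with family of Poisson matrices $\{\Omega(t)\}_{t\in\mathbb{T}_{n}}$, at each $t$ I form $W(t)$ from $\Omega(t)$ and the already-given $\Lambda(t)$ via (\ref{IImatrix}), and attach formal generators $\tilde{Y}(t)$ satisfying (\ref{second quantum exchange}) to produce a II-quantum seed $\Sigma(t)=(\tilde{Y}(t),\tilde{B}(t),\Lambda(t),W(t))$. Two things need checking: (a) each triple $(\tilde{B}(t),\Lambda(t),W(t))$ is compatible in the sense of Definition \ref{triple}; (b) II-quantum seed mutation as in Definition \ref{IIm} sends $\Sigma(t)$ to $\Sigma(t')$ for adjacent $t-t'$, so that the $\{\Sigma(t)\}$ glue to a secondly quantized cluster algebra $A_{p,q}$. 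For (a), conditions (C2), (C3), (C4) are literally the translates of (C$2'$), (C$3'$), (C$4'$) under (\ref{IImatrix}), and the latter hold at every adjacent pair of seeds by Theorem \ref{exchange}. For (b), one must show that applying (\ref{omega mutation formula}) to $\Omega$ and then translating via (\ref{IImatrix}) produces the same matrix as applying the linear $W$-mutation to $W$ directly; this is where the real work sits.

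For the reverse map, given a secondly quantized cluster algebra $A_{p,q}$ with compatible triples $(\tilde{B}(t),\Lambda(t),W(t))$, I define $\Omega(t)$ by the inverse formula $\omega_{ij}(t)=W_{ij}(t)[\lambda_{ij}(t)]_{q^{\frac{1}{2}}}/\lambda_{ij}(t)$ when $\lambda_{ij}(t)\neq 0$ and $\omega_{ij}(t)=W_{ij}(t)$ otherwise. Reading the translation in the other direction, (C2), (C3), (C4) deliver (C$2'$), (C$3'$), (C$4'$) back, so Theorem \ref{exchange} yields that each adjacent pair $\tilde{X}(t), \mu_{k}(\tilde{X}(t))$ of clusters of $A_{q}$ is simultaneously log-canonical under the bracket $\{X_{i},X_{j}\}=\omega_{ij}(t)X^{e_{i}+e_{j}}$. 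Using the mutation compatibility just discussed, these local log-canonical brackets patch into a single compatible Poisson structure on $A_{q}$ whose Poisson matrices are exactly the $\Omega(t)$. Since (\ref{IImatrix}) and the displayed inverse formula are genuine inverses at the matrix level, the forward and reverse maps are mutually inverse, which proves the bijection.

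The main obstacle is the mutation-compatibility step (b) above, but the tools to overcome it are already in place. By Remark \ref{b2} (a consequence of Lemma \ref{b}), whenever some $\lambda_{tj}\neq 0$ with $b_{tk}\neq 0$, the entries $\omega_{uj}$ along the $k$-th column of $\tilde{B}$ share a common factor $a\in\mathbb{Z}[q^{\pm\frac{1}{2}}]$ in the form $\omega_{uj}=a[\lambda_{uj}]_{q^{\frac{1}{2}}}$; the corresponding $W_{uj}=a\lambda_{uj}$ are then linear in $\lambda$, and the $q$-shuffle in (\ref{omega mutation formula}) collapses to the linear mutation formula of Definition \ref{IIm}. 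The remaining case, where $\lambda_{tj}=0$ for every $t$ with $b_{tk}\neq 0$, is controlled by (C4) and again reduces to the linear $W$-mutation. This is the same case split that appears in the proof of Theorem \ref{exchange}, so the verification is a finite and mechanical check once the right dichotomy is fixed.
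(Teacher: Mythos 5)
Your proposal is correct and follows essentially the same route as the paper: the bijection is realized at the matrix level by (\ref{IImatrix}) and its inverse, the conditions (C$2'$)--(C$4'$) translate to (C2)--(C4), and the crucial mutation-compatibility step is settled by exactly the dichotomy you describe (some $\lambda_{vj}\neq 0$ on the support of the $k$-th column of $\tilde B$, handled via Remark \ref{b2} so that $W_{ij}=a\lambda_{ij}$ and the $q$-mutation linearizes, versus all such $\lambda$ vanishing, where $W_{ij}=\omega_{ij}$ and the mutation is already linear) --- this is precisely the content of the paper's theorem following Definition \ref{IIm}.
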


We say a matrix $B$ has a matrix decomposition $B=B_{1}\bigoplus B_{2}$ if
\[B=\begin{pmatrix}
B_{1} & O \\
O & B_{2}
\end{pmatrix}\]

For a quantum cluster algebra $A_{q}$, if for a certain seed $(\tilde{Y},\tilde{B},\Lambda,W)$, there are the matrix decompositions $W=\bigoplus\limits_{i=1}^{r}W_{i}$ and $\Lambda=\bigoplus\limits_{i=1}^{r}\Lambda_{i}$ such that
\begin{equation}\label{mb}
W_{i}=a_{i}\Lambda_{i},\; \text{ for some }\; a_{i}\in\mathbb{Z}\;\; \forall i=1,\cdots,r,
\end{equation}
and let $I_{i}$ be the index set of $W_{i}$, then

(i)\[X_{k}X_{l}=\left\{\begin{array}{lc}
p^{\frac{1}{2}W_{kl}}q^{\frac{1}{2}\lambda_{kl}}X_{k}X_{l}=(p^{a_{i}}q)^{\frac{1}{2}\lambda_{kl}}X_{k}X_{l} & k,l\in I_{i} \\
X_{l}X_{k} & k\in I_{i},l\in I_{j},i\neq j
\end{array}\right.\]

(ii)in this case, (\ref{mb}) and (i) always hold for any seed.

Hence, the secondly quantized cluster algebra $A_{p,q}$ is essentially a quantum cluster algebra. Thus under this condition, we say the secondly quantized cluster algebra $A_{p,q}$ to be {\bf trivial}. Otherwise, it is called {\bf non-trivial}.

For example $W=a\Lambda,a\in\mathbb{Z}$, then there is a canonical $\mathbb{Z}[p^{\pm\frac{1}{2}}]$-algebra isomorphism $A_{p,q}\cong A_{q}\otimes_{\mathbb{Z}[q^{\pm\frac{1}{2}}]}\mathbb{Z}[p^{\pm\frac{1}{2}},q^{\pm\frac{1}{2}}]$ sending $Y^{e_{i}}_{t}$ to $X^{e_{i}}_{t}$, $q$ to $p^{-a}q$ and $p$ to $p$, therefore the quantum cluster algebra $A_{q}$ can be embedded into the secondly quantized cluster algebra $A_{p,q}$, which means that their cluster algebraic structures coincide.

\begin{Proposition}\label{BW=D}
Let $(\tilde{X}(t),\tilde{B}(t),\Lambda(t))$ be a seed of a quantum cluster algebra $A_{q}$ at $t\in \mathbb{T}_n$ and $\left\{-,-\right\}$ a compatible Poisson structure on $A_{q}$. Then the second deformation matrix $W(t)$ satisfies that
\begin{equation}\label{add}
\tilde{B}(t)^{\top}W(t)=c(D\; O),
\tag{C1$^*$}
\end{equation}
where $c\in\mathbb{Z}[q^{\pm\frac{1}{2}}]$ and $D$ is the skew-symmetrizer of $\tilde{B}(t)$.
\end{Proposition}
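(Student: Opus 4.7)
The plan is to expand $\tilde B(t)^{\top}W(t)$ entry by entry, use Theorem \ref{exchange} in its reformulation (C2), (C3), (C4) to force every off-diagonal entry (within the first $n$ rows) to vanish, and then use a skew-symmetry argument on $\tilde B^{\top}W\tilde B$ to identify the surviving diagonal with a scalar multiple of $D$.

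First I would fix $k\in[1,n]$ and $j\in[1,m]$ with $j\neq k$ and compute $(\tilde B^{\top}W)_{kj}=\sum_{u}b_{uk}W_{uj}$ by dichotomy on whether there exists $u_{0}$ with $b_{u_{0}k}\neq 0$ and $\lambda_{u_{0}j}\neq 0$. If no such $u_{0}$ exists, every contributing summand has $\lambda_{uj}=0$ and (C4) gives the sum $=0$ immediately. If such a $u_{0}$ exists, condition (C3) applied with direction $k$ (allowed because $j\neq k$) yields $W_{uj}\lambda_{u_{0}j}=W_{u_{0}j}\lambda_{uj}$ for every $u$ with $b_{uk}\neq 0$; dividing, $W_{uj}=\alpha_{kj}\lambda_{uj}$ uniformly in such $u$, with $\alpha_{kj}:=W_{u_{0}j}/\lambda_{u_{0}j}$ (even when $\lambda_{uj}=0$, which then forces $W_{uj}=0$). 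The sum collapses to $\alpha_{kj}(\tilde B^{\top}\Lambda)_{kj}=\alpha_{kj}d_{k}\delta_{kj}=0$. Hence $(\tilde B^{\top}W)_{kj}=0$ for every $j\neq k$.

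Next, to handle the diagonal entries $(\tilde B^{\top}W)_{kk}$ — which (C2)--(C4), all formulated under $j\neq k$, cannot directly reach — I would exploit the skew-symmetry of $W$. The off-diagonal vanishing just established gives
\[
(\tilde B^{\top}W\tilde B)_{kl}=\sum_{j=1}^{m}(\tilde B^{\top}W)_{kj}b_{jl}=(\tilde B^{\top}W)_{kk}\,b_{kl}\qquad(k,l\in[1,n]).
\]
Since $W$ is skew-symmetric, $\tilde B^{\top}W\tilde B$ is too, so $(\tilde B^{\top}W)_{kk}b_{kl}+(\tilde B^{\top}W)_{ll}b_{lk}=0$. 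Combined with $d_{k}b_{kl}+d_{l}b_{lk}=0$, coming from skew-symmetry of $DB$, this forces $(\tilde B^{\top}W)_{kk}/d_{k}=(\tilde B^{\top}W)_{ll}/d_{l}$ whenever $b_{kl}\neq 0$. Thus the quotient $(\tilde B^{\top}W)_{kk}/d_{k}$ is constant along every edge of the mutable quiver of $\tilde B(t)$; under the cluster-indecomposable hypothesis (to which the general case reduces via the cluster decomposition established in Section 5) the mutable quiver is connected and this quotient is a single element $c\in\mathbb{Z}[q^{\pm\frac{1}{2}}]$, yielding $\tilde B(t)^{\top}W(t)=c(D\;O)$.

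The main obstacle I expect is the final connectivity step. The conditions (C2)--(C4), being valid only for $j\neq k$, are genuinely blind to the diagonal entries $(\tilde B^{\top}W)_{kk}$ and do not directly relate the scalars $\alpha_{kj}$ across different mutation directions; the skew-symmetry trick circumvents this, but only up to the connected-component structure of the mutable quiver, so a reduction to the cluster-indecomposable situation is genuinely needed to collapse all of the per-component scalars into a single $c$.
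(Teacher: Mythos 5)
Your argument is correct and is essentially the paper's own proof: the off-diagonal entries of $\tilde B(t)^{\top}W(t)$ are killed by the compatibility conditions (the paper routes this through Remark \ref{b2}, i.e.\ (C2)--(C3) together with (C4), exactly matching your dichotomy on whether some $u_0$ has $b_{u_0k}\lambda_{u_0j}\neq 0$), and the diagonal is then identified via the skew-symmetry of $\tilde B^{\top}W\tilde B$. The only substantive difference is your closing caveat: the paper concludes ``$MB$ skew-symmetric, hence $M=cD$'' in one line, whereas you correctly note that this final step needs the principal part of $\tilde B$ to be connected --- for a decomposable $B$ a compatible Poisson structure can produce a different scalar on each indecomposable block (e.g.\ a locally standard structure with distinct constants, cf.\ Theorem \ref{r2}), so the single constant $c$ genuinely requires indecomposability rather than being ``collapsed'' by the decomposition; this is a sharpening of, not a gap in, your argument.
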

\begin{proof}
Here, since we only discuss with the seed of $A_q$ at the vertex $t$, we will omit $t$ for clusters, exchange matrices, deformation matrices and etc.. Let $\tilde{B}_{l} (l\in [1,n])$ denote the $l$-th column of $\tilde{B}$.
Let $k\in[1,n],j\in[1,m]$ and $k\neq j$.

If $\lambda_{kj}\neq 0$ or $b_{lk}\lambda_{lj}\neq 0$ for some $l$, then according to Remark \ref{b2}, $\omega_{ij}=a[\lambda_{ij}]_{q^{\frac{1}{2}}}$ for any $i$ such that $b_{ik}\neq 0$ or $i=k$, where $a\in\mathbb{Z}[q^{\pm\frac{1}{2}}]$. Hence $w_{ij}=a\lambda_{ij}$ for any $i$ such that $b_{ik}\neq 0$ or $i=k$ and $\tilde{B}_{k}^{\top}W_{j}=\sum\limits_{i=1}^{m}b_{ik}W_{ij}=a\sum\limits_{i=1}^{m}b_{ik}\lambda_{ij}=0$, where $W_{j}$ represents the $j$-th column of $W$.

Otherwise, if $\lambda_{ij}=0$ for all $i$ such that $b_{ik}\neq 0$ or $i=k$. Then we know from Lemma \ref{exchange} that $\tilde{B}_{k}^{\top}W_{j}=\sum\limits_{i=1}^{m}b_{ik}W_{ij}=\sum\limits_{i=1}^{m}b_{ik}\omega_{ij}=0$.

Therefore, $\tilde{B}^{\top}W=(M~ O)$, where $M$ is an $n\times n$ diagonal matrix. Moreover, $\tilde{B}^{\top}W\tilde{B}$ is a skew-symmetric matrix, which means $MB$ is skew-symmetric, i.e. $M=cD$.

\end{proof}

Hence given a compatible triple $(\tilde{B},\Lambda,W)$, $(\tilde{B},W)$ is also a compatible pair in the meaning of the condition (C1). Moreover, by Definition \ref{triple}, it can be checked that $(\tilde{B},W,a\Lambda)$ is a compatible triple for certain $a\in\mathbb{Z}$ to make sure the corresponding Poisson matrix having elements in $\mathbb{Z}[q^{\pm\frac{1}{2}}]$. Therefore, $(\tilde{B},\Lambda)$ and $(\tilde{B},W)$ induce two quantization $A_{q}$ and $A^{\prime}_{p}$ for a cluster algebra $A$ associated to $\tilde{B}$. In general, $A_q$ and $A^{\prime}_p$ are different as quantum cluster algebras. Then $(\tilde{B},\Lambda,W)$ and $(\tilde{B},W,a\Lambda)$ respectively induces a second quantization $A_{p,q}$ and $A_{q,p}^{\prime}$ for $A_{q}$ and $A^{\prime}_{p}$. However, according to the definition of secondly quantized algebras, there is a cluster isomorphism $\psi$ from $A_{p,q}\otimes_{\mathbb{Z}[q^{\pm\frac{1}{2}}]}\mathbb{Z}[q^{\pm\frac{1}{2a}}]$ to $A_{q,p}^{\prime}$ fixing variables and sending $p$ to $q$, $q$ to $p^{a}$.

We will call the quantum cluster algebra $A^{\prime}_p$ a {\bf dual quantum cluster algebra} of $A_q$ on the compatible pair $(\tilde{B},\Lambda,W)$.

Therefore, we have the following two ways of quantization induced by a triple $(\tilde{B},\Lambda,W)$:
\begin{figure}[H]
\centering
\includegraphics[width=100mm]{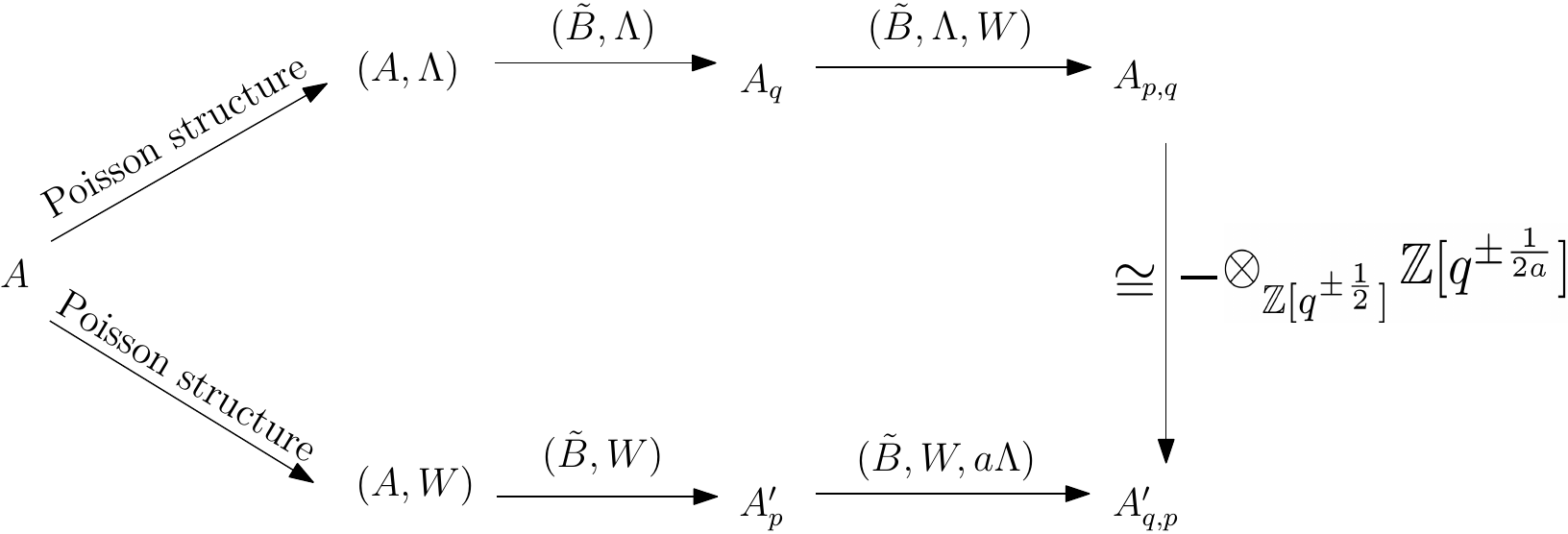}
\caption{Two ways of quantization}
\end{figure}

In summary, we have the following conclusion:
\begin{Theorem}\label{twoways}
For a cluster algebra $A$ and a compatible triple $(\tilde{B},\Lambda,W)$ from its two various Poisson structures, the secondly quantized cluster algebras $A_{p.q}$ and $A'_{q,p}$ from $A_q$ and its dual $A'_p$ determined by the compatible pairs $(\tilde{B},\Lambda)$ and $(\tilde{B},W)$ respectively with the compatible triples $(\tilde{B},\Lambda,W)$ and $(\tilde{B},W,a\Lambda)\; (a\in\mathds{Z})$ are essentially the same under the cluster isomorphism $\psi$:
$$\psi:\; A_{p,q}\otimes_{\mathbb{Z}[q^{\pm\frac{1}{2}}]}\mathbb{Z}[q^{\pm\frac{1}{2a}}]\longrightarrow A_{q,p}^{\prime}$$
fixing variables and sending $p$ to $q$, $q$ to $p^{a}$.
\end{Theorem}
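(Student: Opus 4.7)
The plan is to construct $\psi$ seed-by-seed and check it is a well-defined cluster isomorphism with an explicit inverse. First I would set $\psi(Y^{e_i})=(Y')^{e_i}$ on the initial cluster and extend to scalars by $p^{\frac{1}{2}}\mapsto q^{\frac{1}{2}}$ together with $q^{\frac{1}{2a}}\mapsto p^{\frac{1}{2}}$; since $(q^{\frac{1}{2a}})^{2a}=q$ and $(p^{\frac{1}{2}})^{2a}=p^{a}$, these are consistent with the prescribed $q\mapsto p^{a}$, and this is precisely why the tensor factor $\mathbb{Z}[q^{\pm\frac{1}{2a}}]$ is grafted onto the domain. To see $\psi$ descends to a homomorphism of II-quantum tori, I would apply it to the defining relation $Y^{e_i}Y^{e_j}=p^{\frac{1}{2}W_{ij}}q^{\frac{1}{2}\lambda_{ij}}Y^{e_i+e_j}$: the image $q^{\frac{1}{2}W_{ij}}p^{\frac{1}{2}a\lambda_{ij}}(Y')^{e_i+e_j}$ agrees, by the commutativity of the formal parameters $p$ and $q$ in the scalar ring, with the II-quantum torus relation $(Y')^{e_i}(Y')^{e_j}=p^{\frac{1}{2}(a\lambda_{ij})}q^{\frac{1}{2}W_{ij}}(Y')^{e_i+e_j}$ obtained by applying (\ref{second quantum exchange}) to the compatible triple $(\tilde{B},W,a\Lambda)$.

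The second step is to propagate $\psi$ through mutation. Because the exchange matrix $\tilde{B}$ is common to the two triples, the binomial exchange formula $\mu_k(Y^{e_k})=Y^{-e_k+[b_k]_+}+Y^{-e_k+[-b_k]_+}$ is transported term by term to the corresponding formula for $\mu_k((Y')^{e_k})$; on the matrix side, the mutation rules of (\ref{lambdamutation}) and Definition \ref{IIm} are $\mathbb{Z}$-linear in the matrix entries, so $\mu_k(a\Lambda)=a\,\mu_k(\Lambda)$ and the mutated triples $(\tilde{B}(t'),\Lambda(t'),W(t'))$ and $(\tilde{B}(t'),W(t'),a\Lambda(t'))$ stay matched. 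Repeating the verification of the first step at the mutated seed and inducting on mutation distance in $\mathbb{T}_n$ propagates $\psi$ to all of $A_{p,q}\otimes\mathbb{Z}[q^{\pm\frac{1}{2a}}]$. A two-sided inverse $\psi^{-1}:A'_{q,p}\to A_{p,q}\otimes\mathbb{Z}[q^{\pm\frac{1}{2a}}]$ is defined symmetrically by $(Y')^{e_i}\mapsto Y^{e_i}$, $p^{\frac{1}{2}}\mapsto q^{\frac{1}{2a}}$, $q^{\frac{1}{2}}\mapsto p^{\frac{1}{2}}$; the same calculation shows it is a cluster homomorphism, and the compositions $\psi\circ\psi^{-1}$, $\psi^{-1}\circ\psi$ are the identity on generators and scalars, hence everywhere.

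The only real obstacle is the bookkeeping at the initial step: one has to recognize that passing from the triple $(\tilde{B},\Lambda,W)$ to $(\tilde{B},W,a\Lambda)$ swaps the roles of the two deformation matrices, and consequently the parameters $p$ and $q$ swap their pairings with $W$ and $\Lambda$ in going from $A_{p,q}$ to $A'_{q,p}$. It is precisely this swap that makes the prescription $p\mapsto q$, $q\mapsto p^{a}$ carry the $A_{p,q}$-relation to the $A'_{q,p}$-relation, the discrepancy in monomial order being absorbed by the commutativity of the scalar parameters. Once this identification is made, every subsequent check is a routine transcription.
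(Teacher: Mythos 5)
The paper offers essentially no proof of this theorem: it is stated as a summary of the preceding paragraph, whose entire justification is Proposition \ref{BW=D} (so that $(\tilde{B},W)$ satisfies (C1)), the unverified assertion that $(\tilde{B},W,a\Lambda)$ is again a compatible triple, and the single sentence ``according to the definition of secondly quantized algebras, there is a cluster isomorphism $\psi\dots$''. Your proposal actually supplies the missing content --- the definition of $\psi$ on generators and scalars, the check on the II-quantum torus relations, the observation that the mutation rules for $\Lambda$ and $W$ are $\mathbb{Z}$-linear in the entries so that the two triples stay matched seed by seed, and the explicit inverse --- so it goes beyond what the paper does rather than diverging from it. Like the paper, however, you take for granted that $(\tilde{B},W,a\Lambda)$ is a compatible triple, i.e.\ that (C2)--(C4) hold at every seed after swapping the roles of $\Lambda$ and $W$. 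Conditions (C2) and (C3) are symmetric under the swap once cross-multiplied, but (C4) changes its index set from $\{t:\lambda_{tj}=0\}$ to $\{t:W_{tj}=0\}$, so this is not a tautology; without it $A'_{q,p}$ is not even defined, and it deserves a line (the paper only says ``it can be checked'').

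The one point you must pin down is the defining relation of $A'_{q,p}$, because the paper never writes it. You take it to be $(Y')^{e_i}(Y')^{e_j}=p^{\frac{1}{2}a\lambda_{ij}}q^{\frac{1}{2}W_{ij}}(Y')^{e_i+e_j}$ by verbatim substitution into (\ref{second quantum exchange}); with that convention your step-one computation is correct and the stated map $p\mapsto q$, $q\mapsto p^{a}$ works. But that convention makes $q$ the base parameter of the dual, which clashes with the paper's label $A'_{p}$ for the dual quantum cluster algebra (its deformation matrix is $W$ and its quantum parameter is $p$). Under the reading consistent with that label, the relation is $q^{\frac{1}{2}a\lambda_{ij}}p^{\frac{1}{2}W_{ij}}(Y')^{e_i+e_j}$, and then $p\mapsto q$, $q\mapsto p^{a}$ sends $p^{\frac{1}{2}W_{ij}}q^{\frac{1}{2}\lambda_{ij}}$ to $q^{\frac{1}{2}W_{ij}}p^{\frac{1}{2}a\lambda_{ij}}$, which matches the target only when $W=a\Lambda$, i.e.\ in the trivial case; the isomorphism that does work there is $p\mapsto p$, $q^{\frac{1}{2a}}\mapsto q^{\frac{1}{2}}$, which still requires exactly the tensor factor $\mathbb{Z}[q^{\pm\frac{1}{2a}}]$. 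Either way the conclusion of the theorem survives, but your first step is only valid after you declare which pairing of the parameters with $W$ and $a\Lambda$ the notation $A'_{q,p}$ carries, so state that convention explicitly before running the verification.
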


The basic idea of this paper is to lift the Poisson matrices of (quantum) cluster algebras to the deformation matrices, so as to complete the first or second quantization of this algebra. Based on this idea, we can similarly continue to obtain the third quantization of a secondly quantized cluster algebra, even in general, to obtain the $(n+1)$-th quantization of an $n$-th quantized cluster algebra. The whole progress would be much like that in this paper but with more parameters. The further quantization may go on until it is faced to no non-trivial quantization, since all matrices $R$ satisfying $\tilde{B}^{\top}R=cD$ consists of a finite-dimension linear space.

Particularly, in Section 6, we will prove that the second quantization is trivial in the case without coefficients but may be admitted to be non-trivial in the case with coefficients.

Recently, before this work (the first version) has be finished in the beginning of this year, we found {\em toroidal cluster algebras} defined in \cite{GY} and \cite{FH}. According to Proposition \ref{BW=D}, a secondly quantized cluster algebra in this paper is a toroidal cluster algebra associated to an initial seed containing a compatible triple.

In \cite{FH}, Laurent phenomena was proved to be true for toroidal cluster algebras in skew-symmetrizable case as well as positivity for cluster variable in skew-symmetric case. Moreover, the exchange graph of a toroidal cluster algebra is incident to that of its corresponding (quantum) cluster algebra. Therefore, the corresponding properties of secondly quantized cluster algebras naturally follow as they are a special kind of toroidal cluster algebras.
\vspace{4mm}

\section{A lemma for decomposability of quantum cluster algebras}
First of all, let us focus on the condition (C4), which looks similar to the condition for a Poisson bracket being compatible with a cluster algebra in \cite{N}. Note that in (C$4'$): $\sum\limits_{\lambda_{tj}=0}\omega_{tj}b_{tk}=0$, the sum is given only for those terms whose corresponding $\lambda_{tj}=0$. So, we define a new matrix $\hat{\Omega}=\{\hat{\omega}_{ij}\}$ from $\Omega$ where $\hat{\omega}_{ij}=\omega_{ij}$ if $\lambda_{ij}=0$ and $\hat{\omega}_{ij}=0$ otherwise.

For any $i\neq j\in[1,m]$, we say that $i$ {\bf is connected to} $j$ (with respect to $B$) if $b_{ij}\neq 0$. We define the relation $i\sim j$ if either $i=j$ or $i$ is connected to $j$.

\begin{Lemma}\label{zero}
In a quantum cluster algebra $A_q$ with a Poisson structure $\{-,-\}$, let $(\tilde{X}^{\prime},\tilde{B}^{\prime},\Lambda^{\prime})$ be a seed mutation equivalent to the initial seed $(\tilde{X},\tilde{B},\Lambda)$. Assume $\tilde{X}$ and $\tilde{X}^{\prime}$ are log-canonical with respective to $\{-,-\}$ with Poisson matrices $\Omega$ and $\Omega^{\prime}$ respectively. Then $\sum\limits_{\lambda_{tj}^{\prime}=0}\omega_{tj}^{\prime}b_{tk}^{\prime}=0$ for any $j\neq k$ if and only if $\hat{\Omega} \tilde{B}=c\tilde{D}$, where c$\in \mathbb{Z}[q^{\pm\frac{1}{2}}]$,
$\tilde{D}=
\begin{pmatrix}
D\\
O
\end{pmatrix}_{m\times n}$ for $D$
a skew-symmetrizer of $B$. And, in this case $\hat{\Omega}^{\prime} \tilde{B}^{\prime}=c\tilde{D}$.
\end{Lemma}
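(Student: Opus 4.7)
The plan is to rewrite the condition in matrix form and reduce to mutation invariance of the matrix identity.

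First, since $\Lambda$ is skew-symmetric the set $\{(i,j):\lambda_{ij}=0\}$ is symmetric, so $\hat{\Omega}$ inherits the skew-symmetry of $\Omega$ and $\sum_{\lambda_{tj}=0}\omega_{tj}b_{tk}$ equals $-(\hat{\Omega}\tilde{B})_{jk}$. The hypothesis at $\tilde{X}'$ therefore says that $(\hat{\Omega}'\tilde{B}')_{jk}=0$ for all $j\in[1,m]$, $k\in[1,n]$, $j\neq k$, i.e.\ $\hat{\Omega}'\tilde{B}'=\begin{pmatrix}M'\\ O\end{pmatrix}$ for some $n\times n$ diagonal $M'$. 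The skew-symmetry of $\hat{\Omega}'$ makes $\tilde{B}'^{\top}\hat{\Omega}'\tilde{B}'=M'B'$ skew-symmetric; comparing with the relation $DB'=-B'^{\top}D$ coming from principal skew-symmetrizability forces $m'_i/d_i=m'_j/d_j$ whenever $b'_{ij}\neq 0$, and exactly as in the proof of Proposition~\ref{BW=D} this yields $M'=cD$ for a single $c\in\mathbb{Z}[q^{\pm\frac{1}{2}}]$. Conversely, $\hat{\Omega}'\tilde{B}'=c\tilde{D}$ trivially implies the vanishing condition in the statement. So the lemma at the primed seed reduces to $\hat{\Omega}'\tilde{B}'=c\tilde{D}$.

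The main technical hurdle is then to show that this matrix identity is mutation invariant, i.e.\ $\hat{\Omega}'\tilde{B}'=c\tilde{D}$ at $\tilde{X}'$ if and only if $\hat{\Omega}\tilde{B}=c\tilde{D}$ at $\tilde{X}$, with the \emph{same} constant $c$. This is delicate because $\hat{\Omega}$ depends on the zero-pattern of $\Lambda$, which itself evolves under the mutation formula (\ref{lambdamutation}); consequently $\hat{\Omega}$ is not transformed by the $\Omega$-mutation rule (\ref{omega mutation formula}). My plan is to handle one adjacent mutation $\mu_{k^{*}}$ at a time, iterating along a mutation path between $\tilde{X}'$ and $\tilde{X}$ (log-canonicity propagates stepwise by Theorem~\ref{exchange}), and on each step to split indices $(i,j)$ according to whether $\lambda_{ij}$ or $\lambda_{ij}'$ vanishes. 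On the ``connected'' positions Remark~\ref{b2} forces $\omega_{ij}=a[\lambda_{ij}]_{q^{\frac{1}{2}}}$ for some $a\in\mathbb{Z}[q^{\pm\frac{1}{2}}]$, and the same $a$ then controls $\omega'_{ij}$ via Theorem~\ref{pmm}(2); on the ``unconnected'' positions $\hat{\omega}_{ij}=\omega_{ij}$ transforms in a controlled way. The bookkeeping of which positions fall into which case is the delicate part; once it is in place, a direct entry-by-entry comparison shows $(\hat{\Omega}'\tilde{B}')_{ik}=(\hat{\Omega}\tilde{B})_{ik}$, so both sides equal $c\tilde{D}$ with the same $c$.

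Granted this mutation invariance, both directions of the iff and the final addendum drop out together. The forward direction: the first paragraph gives $\hat{\Omega}'\tilde{B}'=c\tilde{D}$, which transports to $\hat{\Omega}\tilde{B}=c\tilde{D}$ at $\tilde{X}$. The converse: starting from $\hat{\Omega}\tilde{B}=c\tilde{D}$, transport back to get $\hat{\Omega}'\tilde{B}'=c\tilde{D}$, which by the first paragraph is equivalent to the vanishing condition in the statement. The closing sentence ``in this case $\hat{\Omega}'\tilde{B}'=c\tilde{D}$'' is then simply the statement of mutation invariance itself.
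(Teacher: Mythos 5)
Your first paragraph is a sound reformulation and matches the paper's logic: by skew-symmetry of $\hat{\Omega}'$ the vanishing condition is equivalent to $\hat{\Omega}'\tilde{B}'=\left(\begin{smallmatrix}M'\\ O\end{smallmatrix}\right)$ with $M'$ diagonal, and skew-symmetry of $\tilde{B}'^{\top}\hat{\Omega}'\tilde{B}'$ forces $M'=cD$ (strictly, with one constant per connected component of the principal part --- a caveat the paper's own argument shares). The genuine gap is in your second paragraph, where the key quantitative claim --- that a direct comparison gives $(\hat{\Omega}'\tilde{B}')_{ik}=(\hat{\Omega}\tilde{B})_{ik}$ entry by entry under one mutation --- is false, and it fails in exactly the way that carries the whole content of the lemma. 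Write $a_{h}=\sum_{\lambda_{th}=0}\omega_{th}b_{th}$ for the diagonal entries at the unprimed seed and suppose the off-diagonal entries there vanish. The paper's computation of the case where the mutation direction coincides with the column index $j$ gives
\[
\sum_{\lambda_{tj}^{\prime}=0}\omega_{tj}^{\prime}b_{th}^{\prime}=a_{h}[b_{hj}]_{+}-a_{j}[-b_{jh}]_{+},
\]
which is generically nonzero even though the corresponding unprimed entry is $0$. So $\hat{\Omega}\tilde{B}$ is \emph{not} mutation-invariant entry by entry: the mutated off-diagonal entries are explicit combinations of the unmutated diagonal entries, and their vanishing for all $h\neq j$ is precisely the condition $a_{h}=cd_{h}$, i.e.\ $\hat{\Omega}\tilde{B}=c\tilde{D}$. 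The entry-by-entry equality you propose to use as a tool is therefore logically equivalent to the conclusion you are trying to establish; assuming it begs the question.

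What is actually required, and what the paper supplies, is the explicit one-step transformation law for every entry of $\hat{\Omega}\tilde{B}$, split into the cases $k\neq h,j$ (subdivided according to whether some $l\sim k$ has $\lambda_{lj}\neq 0$), $k=h$, and $k=j$, using Lemma \ref{b} and Remark \ref{b2} to discard the indices $t$ with $\lambda_{tj}=0$ that are forced to have $\omega_{tj}=0$. Only the case $k=j$ produces the nontrivial defect displayed above; the remaining cases return $0$ off the diagonal and $a_{h}$ on it, and the separate verification that the diagonal entries are preserved is what yields the closing clause $\hat{\Omega}'\tilde{B}'=c\tilde{D}$ with the \emph{same} $c$, which in turn powers the induction along a mutation path. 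Your proposal names roughly the right case division but carries out none of it, and the one concrete claim it makes about the outcome of the computation is incorrect; as written, the central step of the lemma is missing.
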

\begin{proof}
This proof is similar to that in \cite{N}, but more annoying.

First assume $(\tilde{X}^{\prime},\tilde{B}^{\prime},\Lambda^{\prime})=\mu_{k}(\tilde{X},\tilde{B},\Lambda)$. For any $j\in[1,m],h\in[1,n],j\neq h$, we write $\sum\limits_{\lambda_{tj}=0}\omega_{tj}b_{th}=0$ and $\sum\limits_{\lambda_{th}=0}\omega_{th}b_{th}=a_{h}$. We will show that $\sum\limits_{\lambda_{tj}^{\prime}=0}\omega_{tj}^{\prime}b_{th}^{\prime}=0$ for $j\neq h$ if and only if $\hat{\Omega}\tilde{B}=c\tilde{D}$, and then in this case $\hat{\Omega}^{\prime}\tilde{B}^{\prime}=c\tilde{D}$. Then by induction this lemma holds for any cluster mutation equivalent to the initial cluster.

First We prove that $\sum\limits_{\lambda_{tj}^{\prime}=0}\omega_{tj}^{\prime}b_{th}^{\prime}=0$ for $j\neq h$ if and only if $\hat{\Omega}\tilde{B}=c\tilde{D}$ case by case.

Case 1: $k\neq h,j$ and there is $l\sim k$ such that $\lambda_{lj}\neq 0$. Because $\lambda_{lj}^{\prime}=\lambda_{lj}\neq 0$ and $b_{lk}^{\prime}=-b_{lk}\neq 0$, we have $\omega_{tj}=0$ if $\lambda_{tj}=0$ and $t\sim k$, and $\omega_{kj}^{\prime}=0$ if $\lambda_{kj}^{\prime}=0$ according to Lemma \ref{b}. Hence we have that
$$\sum\limits_{\lambda_{tj}^{\prime}=0}\omega_{tj}^{\prime}b_{th}^{\prime}
=\sum_{\mbox{\tiny$\begin{array}{c}\lambda_{tj}^{\prime}=0 \\t\neq k\end{array}$}}\omega_{tj}^{\prime}b_{th}^{\prime}
=\sum_{\mbox{\tiny$\begin{array}{c}\lambda_{tj}=0 \\t\neq k\end{array}$}}\omega_{tj}^{\prime}b_{th}^{\prime}
=\sum_{\mbox{\tiny$\begin{array}{c}\lambda_{tj}=0 \\t\neq k\end{array}$}}\omega_{tj}(b_{th}+[b_{tk}]_{+}b_{kh}+b_{tk}[-b_{kh}]_{+})$$
$$ =\sum\limits_{\lambda_{tj}=0}\omega_{tj}b_{th}+\sum\limits_{\lambda_{tj}=0}\omega_{tj}b_{kh}[b_{tk}]_{+}+\sum\limits_{\lambda_{tj}=0}
\omega_{tj}b_{tk}[b_{kh}]_{+}
=\sum\limits_{\lambda_{tj}=0}\omega_{tj}b_{kh}[b_{tk}]_{+}
=0.$$
\vspace{3mm}

Case 2: $k\neq h,j$ and for any $l\sim k,\lambda_{lj}=0$. In this case, we can calculate that the mutation formula of $\Omega$ is
\begin{equation*}
\omega_{ij}^{\prime}=\left\{
\begin{array}{lcr}
\omega_{ij}&&i\neq k\\
-\omega_{kj}+\sum\limits_{t=1}^{m}[b_{tk}]_{+}\omega_{tj}&&i=k
\end{array}
\right .
\end{equation*}
So we have
\begin{flalign*}
&\sum\limits_{\lambda_{tj}^{\prime}=0}\omega_{tj}^{\prime}b_{th}^{\prime}
\;=\;\omega_{kj}^{\prime}b_{kh}^{\prime}+\sum_{\mbox{\tiny$\begin{array}{c}\lambda_{tj}^{\prime}=0 \\t\neq k\end{array}$}} \omega_{tj}^{\prime}b_{th}^{\prime}\\
=\;&(-\omega_{kj}+\sum\limits_{t=1}^{m}[b_{tk}]_{+}\omega_{tj})(-b_{kh})+\sum_{\mbox{\tiny$\begin{array}{c}\lambda_{tj}=0 \\t\neq k\end{array}$}}\omega_{tj}(b_{th}+[b_{tk}]_{+}b_{kh}+b_{tk}[-b_{kh}]_{+})\\
=&\sum\limits_{\lambda_{tj}=0}\omega_{tj}b_{th}+[-b_{kh}]_{+}\sum\limits_{\lambda_{tj}=0}\omega_{tj}b_{tk}\\
=&0.
\end{flalign*}
\vspace{3mm}

Case 3: $k=h$. We have
\[\sum\limits_{\lambda_{tj}^{\prime}=0}\omega_{tj}^{\prime}b_{th}^{\prime}
=\sum_{\mbox{\tiny$\begin{array}{c}\lambda_{tj}^{\prime}=0 \\t\neq h\end{array}$}}\omega_{tj}^{\prime}b_{th}^{\prime}
=\sum_{\mbox{\tiny$\begin{array}{c}\lambda_{tj}=0 \\t\neq h\end{array}$}}\omega_{tj}(-b_{th})
=-\sum\limits_{\lambda_{tj}=0}\omega_{tj}b_{th}
=0.\]

Case 4: $k=j$. For $i\in[1,m]$, if there is $l\sim j$ so that $\lambda_{il}\neq 0$, then $\lambda_{ij}^{\prime}=0$ induces $\omega_{ij}^{\prime}=0$. Otherwise if $\lambda_{il}=0$ for any $l\sim j$, then $\omega_{ij}^{\prime}=-\omega_{ij}+\sum\limits_{t=1}^{m}[b_{tj}]_{+}\omega_{it}$. Therefore

\begin{flalign*}
\sum\limits_{\lambda_{tj}^{\prime}=0}\omega_{tj}^{\prime}b_{th}^{\prime}
&=\sum_{\mbox{\tiny$\begin{array}{c}t:\lambda_{tl}=0 \\\forall l\sim j\end{array}$}}\omega_{tj}^{\prime}b_{th}^{\prime}\\
&=\sum_{\mbox{\tiny$\begin{array}{c}t:\lambda_{tl}=0 \\\forall l\sim j\\t\neq j\end{array}$}}(-\omega_{tj}+\sum\limits_{p=1}^{m}[b_{pj}]_{+}\omega_{tp})(b_{th}+[b_{tj}]_{+}b_{jh}+b_{tj}[-b_{jh}]_{+})\\
&=-\sum_{\mbox{\tiny$\begin{array}{c}t:\lambda_{tl}=0 \\\forall l\sim j\\t\neq j\end{array}$}}\omega_{tj}b_{th}+ \sum_{\mbox{\tiny$\begin{array}{c}t:\lambda_{tl}=0 \\\forall l\sim j\\t\neq j\end{array}$}}\sum\limits_{p=1}^{m}[b_{pj}]_{+}\omega_{tp}b_{th}\\
&-\sum_{\mbox{\tiny$\begin{array}{c}t:\lambda_{tl}=0 \\\forall l\sim j\\t\neq j\end{array}$}}\omega_{tj}[b_{tj}]_{+}b_{jh}+ \sum_{\mbox{\tiny$\begin{array}{c}t:\lambda_{tl}=0 \\\forall l\sim j\\t\neq j\end{array}$}}\sum\limits_{p=1}^{m}[b_{pj}]_{+}\omega_{tp}[b_{tj}]_{+}b_{jh}\\
&-\sum_{\mbox{\tiny$\begin{array}{c}t:\lambda_{tl}=0 \\\forall l\sim j\end{array}$}}\omega_{tj}b_{tj}[-b_{jh}]_{+}+ \sum_{\mbox{\tiny$\begin{array}{c}t:\lambda_{tl}=0 \\\forall l\sim j\\t\neq j\end{array}$}}\sum\limits_{p=1}^{m}[b_{pj}]_{+}\omega_{tp}b_{tj}[-b_{jh}]_{+}. \;\;\;\; \;\;\;(\bigstar)
\end{flalign*}
Let us deal with these terms one by one. We have:
\begin{equation}\label{t1}
\sum_{\mbox{\tiny$\begin{array}{c}t:\lambda_{tl}=0 \\\forall l\sim j\\t\neq j\end{array}$}}\omega_{tj}b_{th}
=\sum\limits_{\lambda_{tj}=0}\omega_{tj}b_{th}
=0.
\end{equation}

This is because $\omega_{tj}=0$ if $\lambda_{tj}=0$ and there is $l\sim j$ satisfying $\lambda_{tl}\neq 0$.
\begin{flalign}
&\sum_{\mbox{\tiny$\begin{array}{c}t:\lambda_{tl}=0 \\\forall l\sim j\\t\neq j\end{array}$}}\sum\limits_{p=1}^{m}[b_{pj}]_{+}\omega_{tp}b_{th}
\;=\;\sum\limits_{p=1}^{m}\sum_{\mbox{\tiny$\begin{array}{c}t:\lambda_{tl}=0 \\\forall l\sim j\\t\neq j\end{array}$}}[b_{pj}]_{+}\omega_{tp}b_{th}\label{t2}\\
=&\sum\limits_{p=1}^{m}[b_{pj}]_{+}(\sum_{\mbox{\tiny$\begin{array}{c}t:\lambda_{tl}=0 \\\forall l\sim j\end{array}$}}\omega_{tp}b_{th})- \sum\limits_{\lambda_{jp}=0}[b_{pj}]_{+}\omega_{jp}b_{jh}
\;=\;a_{h}[b_{hj}]_{+}+\sum\limits_{\lambda_{tj}=0}[b_{tj}]_{+}\omega_{tj}b_{jh}\nonumber\\
=\;&a_{h}[b_{hj}]_{+}+\sum_{\mbox{\tiny$\begin{array}{c}t:\lambda_{tl}=0 \\\forall l\sim j\\t\neq j\end{array}$}}[b_{tj}]_{+}\omega_{tj}b_{jh}.\nonumber
\end{flalign}

And,
\begin{flalign}
&\sum_{\mbox{\tiny$\begin{array}{c}t:\lambda_{tl}=0 \\\forall l\sim j\\t\neq j\end{array}$}}\sum\limits_{p=1}^{m}[b_{pj}]_{+}\omega_{tp}[b_{tj}]_{+}b_{jh}\label{t3}\\
=&\sum_{\mbox{\tiny$\begin{array}{c}t:\lambda_{tl}=0 \\\forall l\sim j\end{array}$}}\sum_{\mbox{\tiny$\begin{array}{c}p:\lambda_{pl}=0 \\\forall l\sim j\end{array}$}}[b_{pj}]_{+}\omega_{tp}[b_{tj}]_{+}b_{jh}+ \sum_{\mbox{\tiny$\begin{array}{c}t:\lambda_{tl}=0 \\\forall l\sim j\\t\neq j\end{array}$}}\sum_{\mbox{\tiny$\begin{array}{c}p:\exists l^{\prime}\sim j\\\lambda_{pl^{\prime}}\neq 0 \end{array}$}}[b_{pj}]_{+}\omega_{tp}[b_{tj}]_{+}b_{jh}\nonumber\\
=&0.\nonumber
\end{flalign}

The last equality holds because the first term equals its opposite number.
$$\sum_{\mbox{\tiny$\begin{array}{c}t:\lambda_{tl}=0 \\\forall l\sim j\end{array}$}}\sum_{\mbox{\tiny$\begin{array}{c}p:\lambda_{pl}=0 \\\forall l\sim j\end{array}$}}[b_{pj}]_{+}\omega_{tp}[b_{tj}]_{+}b_{jh}
=\sum_{\mbox{\tiny$\begin{array}{c}p:\lambda_{pl}=0 \\\forall l\sim j\end{array}$}}\sum_{\mbox{\tiny$\begin{array}{c}t:\lambda_{tl}=0 \\\forall l\sim j\end{array}$}}[b_{tj}]_{+}\omega_{pt}[b_{pj}]_{+}b_{jh} $$
$$=-\sum_{\mbox{\tiny$\begin{array}{c}t:\lambda_{tl}=0 \\\forall l\sim j\end{array}$}}\sum_{\mbox{\tiny$\begin{array}{c}p:\lambda_{pl}=0 \\\forall l\sim j\end{array}$}}[b_{pj}]_{+}\omega_{tp}[b_{tj}]_{+}b_{jh};$$

and if $b_{pj}\neq 0$, then $p\sim j$ so $\lambda_{tp}=0$, which together with $\lambda_{l^{\prime}p}\neq 0$ induces $\omega_{tp}=0$.

\begin{equation}\label{t4}
\sum_{\mbox{\tiny$\begin{array}{c}t:\lambda_{tl}=0 \\\forall l\sim j\end{array}$}}\omega_{tj}b_{tj}[-b_{jh}]_{+}
=\sum\limits_{\lambda_{tj}=0}\omega_{tj}b_{tj}[-b_{jh}]_{+}
=a_{j}[-b_{jh}]_{+};
\end{equation}

\begin{equation}\label{t5}
\sum_{\mbox{\tiny$\begin{array}{c}t:\lambda_{tl}=0 \\\forall l\sim j\\t\neq j\end{array}$}}\sum\limits_{p=1}^{m}[b_{pj}]_{+}\omega_{tp}b_{tj}[-b_{jh}]_{+}
=\sum\limits_{p=1}^{m}\sum_{\mbox{\tiny$\begin{array}{c}t:\lambda_{tl}=0 \\\forall l\sim j\end{array}$}}[b_{pj}]_{+}\omega_{tp}b_{tj}[-b_{jh}]_{+}
=\sum\limits_{p=1}^{m}[b_{pj}]_{+}[-b_{jh}]_{+}(\sum\limits_{\lambda_{tp}=0}\omega_{tp}b_{tj})
=0.
\end{equation}

Substituting (\ref{t1}),(\ref{t2}),(\ref{t3}),(\ref{t4}) and (\ref{t5}) into ($\bigstar$), we get that
$$\sum\limits_{\lambda_{tj}^{\prime}=0}\omega_{tj}^{\prime}b_{th}^{\prime}=a_{h}[b_{hj}]_{+}-a_{j}[-b_{jh}]_{+},$$
which equals 0 if and only if $a_{h}=cd_{h}$ for all $h\in[1,n]$, where $c\in \mathbb{Z}[q^{\pm\frac{1}{2}}]$.
\vspace{2mm}

Now we will prove that when $\hat\Omega\tilde{B}=c\tilde{D}$, it holds $\hat\Omega^{\prime}\tilde{B}^{\prime}=c\tilde{D}$.

Case 1: $k\neq h$ and there is $l-k$ such that $\lambda_{lh}\neq 0$. Similar to the discussion of Case 1 above,
\begin{equation*}
\begin{array}{rl}
\sum\limits_{\lambda_{th}^{\prime}=0}\omega_{th}^{\prime}b_{th}^{\prime} & =\sum\limits_{\mbox{\tiny$\begin{array}{c}\lambda_{th}=0 \\t\neq k\end{array}$}}\omega_{th}(b_{th}+[b_{tk}]_{+}b_{kh}+b_{tk}[-b_{kh}]_{+}) \\
& =\sum\limits_{\lambda_{th}=0}\omega_{th}b_{th}+\sum\limits_{\lambda_{th}=0}\omega_{th}b_{kh}[b_{tk}]_{+}+\sum\limits_{\lambda_{th}=0}
\omega_{th}b_{tk}[b_{kh}]_{+} \\
& =\sum\limits_{\lambda_{th}=0}\omega_{th}b_{th} \\
& =a_{h}
\end{array}
\end{equation*}

Case 2: $k\neq h$ and for any $l-k,\lambda_{lh}=0$.
\begin{equation*}
\begin{array}{rl}
\sum\limits_{\lambda_{th}^{\prime}=0}\omega_{th}^{\prime}b_{th}^{\prime} & =\omega_{kh}^{\prime}b_{kh}^{\prime}+\sum\limits_{\mbox{\tiny$\begin{array}{c}\lambda_{th}=0 \\t\neq k\end{array}$}} \omega_{th}^{\prime}b_{th}^{\prime} \\
& =(-\omega_{kh}+\sum\limits_{t=1}^{m}[b_{tk}]_{+}\omega_{th})(-b_{kh})+\sum\limits_{\mbox{\tiny$\begin{array}{c}\lambda_{th}=0 \\t\neq k\end{array}$}}\omega_{th}(b_{th}+[b_{tk}]_{+}b_{kh}+b_{tk}[-b_{kh}]_{+}) \\
& =\sum\limits_{\lambda_{th}=0}\omega_{th}b_{th}+[-b_{kh}]_{+}\sum\limits_{\lambda_{th}=0}\omega_{th}b_{tk} \\
& =a_{h}
\end{array}
\end{equation*}

Case 3: $k=h$. We have

\begin{flalign*}
&\sum\limits_{\lambda_{th}^{\prime}=0}\omega_{th}^{\prime}b_{th}^{\prime}\;
=\;\sum_{\mbox{\tiny$\begin{array}{c}t:\lambda_{tl}=0 \\\forall l\sim h\\t\neq h\end{array}$}}(-\omega_{th}+\sum\limits_{p=1}^{m}[b_{ph}]_{+}\omega_{tp})(-b_{th})\;
=\;\sum_{\mbox{\tiny$\begin{array}{c}\lambda_{tj}=0 \\t\neq h\end{array}$}}\omega_{tj}(-b_{th})\\
=&\sum\limits_{\lambda_{th}=0}\omega_{th}b_{th}-\sum_{\mbox{\tiny$\begin{array}{c}t:\lambda_{tl}=0 \\\forall l\sim h\\t\neq h\end{array}$}}\sum\limits_{p=1}^{m}[b_{ph}]_{+}\omega_{tp})b_{th}\;
=\; a_{h}-\sum\limits_{p=1}^{m}[b_{ph}]_{+}(\sum\limits_{\lambda_{tp}=0}\omega_{tp}b_{th})\\
=&a_{h}.
\end{flalign*}

In summary, we obtain $\hat\Omega^{\prime}\tilde{B}^{\prime}=c\tilde{D}$.
\end{proof}

It can be seen from Theorem \ref{exchange} and Lemma \ref{zero} that if we regard a cluster algebra as a quantum cluster algebra with $\Lambda=0$, then the compatible conditions (C2) and (C3) in Theorem \ref{exchange} naturally hold, while (C4) is exactly the compatible condition showed in \cite{GSV} and \cite{N}.

\section{Decomposability of quantum cluster algebras and non-trivial second quantization}

\subsection{Decomposability of quantum cluster algebras}.

First we want to show that the compatibility problem can come down to the cluster indecomposable case in the following sence

\begin{Definition}
(1)\; $\tilde{B}$ is said to be \textbf{decomposable} if there is an non-trivial partition of $[1,m]$, $[1,m]=I_{1}\bigsqcup I_{2}$ so that $b_{ij}=0$ whenever $i\in I_{1},j\in I_{2}$. In this case we denote the composition as $\tilde{B}=\tilde{B}_{I_{1}}\bigsqcup\tilde{B}_{I_{2}}$. Otherwise, $\tilde{B}$ is called \textbf{indecomposable}.

(2)\; A quantum cluster algebra $A_{q}$ is said to be \textbf{cluster indecomposable} if there is an indecomposable matrix $\tilde{B}$ in a seed. Otherwise, $A_q$ is called \textbf{cluster decomposable}.
\end{Definition}
In fact, it is easy to see that once a quantum cluster algebra $A_{q}$ is cluster indecomposable (or cluster decomposable), all $\tilde{B}$ matrices of $A_{q}$ are indecomposable (or decomposable, respectively).

\begin{Proposition}\label{decoposition to pieces}
Let $(\tilde{X},\tilde{B},\Lambda)$ be the initial seed of a quantum cluster algebra $A_{q}$ and $\left\{-,-\right\}$ a Poisson bracket on $A_q$. If $\tilde{B}$ has the decomposition $\tilde{B}=\bigsqcup\limits_{i=1}^{r}\tilde{B}_{I_{i}}$ with indecomposables $\tilde{B}_{I_{i}}$ for $i\in[1,r]$, then $\left\{-,-\right\}$ is compatible with $A_{q}$ if and only if $\left\{-,-\right\}|_{A_{q,I_{i}}}$ is compatible with $A_{q,I_{i}}$ $(i\in[1,r])$ and $\sum\limits_{\lambda_{tj}=0}\omega_{tj}b_{tk}=0$ holds for every $k\in [1,n],j\notin I(k)$, where $A_{q,I_{i}}$ is the quantum cluster subalgebra of $A_q$ generated by $(\tilde{B}_{I_{i}},\Lambda_{I_{i}})$ and $I(k)$ is the one of $I_{i}$ containing $k$.
\end{Proposition}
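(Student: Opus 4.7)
The plan is to exploit Theorem \ref{exchange}, which translates compatibility of $\{-,-\}$ with $A_q$ into the three equivalent conditions (C$2'$), (C$3'$), (C$4'$) holding at every seed. First I would note that the block decomposability of $\tilde{B}$ is preserved under every mutation $\mu_k$ with $k\in[1,n]$: since $b_{tk}=0$ whenever $t\notin I(k)$, the formula (\ref{matrixmut}) never creates cross-block entries, so the partition $\bigsqcup_i I_i$ persists throughout $\mathbb{T}_n$ and the quantum cluster subalgebras $A_{q,I_i}$ are well-defined at every seed.

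For the forward direction, the three conditions of Theorem \ref{exchange}, restricted to indices $k,j$ lying in a common block $I_i$, are exactly the same conditions for $A_{q,I_i}$, yielding compatibility of $\{-,-\}|_{A_{q,I_i}}$ for each $i$. For $k\in[1,n]$ and $j\notin I(k)$, condition (C$4'$) reads $\sum_{\lambda_{tj}=0}\omega_{tj}b_{tk}=0$; since $b_{tk}$ is supported in $I(k)$, this is precisely the cross equation in the statement.

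For the backward direction, the same-block instances of (C$2'$), (C$3'$), (C$4'$) at any seed follow immediately from the blockwise compatibility applied to the corresponding $A_{q,I_i}$. The cross-block instance of (C$4'$) at every seed is obtained by feeding the hypothesized cross equation at the initial seed into Lemma \ref{zero}, which rewrites it as the matrix identity $\hat{\Omega}\tilde{B}=c\tilde{D}$ and guarantees that this identity is mutation-invariant; hence cross (C$4'$) survives to every mutated seed. What remains is to verify the cross-block instances of (C$2'$) and (C$3'$), where $u,v\in I(k)$ satisfy $b_{uk}b_{vk}\neq 0$ but $j\notin I(k)$; here I would split according to whether $\lambda_{kj}$ or some $\lambda_{vj}$ with $b_{vk}\neq 0$ is nonzero, and use Remark \ref{b2} to force $\omega_{uj}$ to be proportional to $[\lambda_{uj}]_{q^{1/2}}$, from which the required ratios follow. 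The residual subcase in which all relevant $\lambda$'s vanish collapses back to the cross (C$4'$) already established.

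The main obstacle I expect is this last step: showing that the cross instances of (C$2'$) and (C$3'$) are forced by nothing more than blockwise compatibility and the single cross equation. The leverage will come from combining Lemma \ref{zero} with the compatibility $\tilde{B}^{\top}\Lambda=(D\ O)$, which together constrain the cross-block structure of $\Omega$ relative to $\Lambda$ tightly enough for Remark \ref{b2} to apply in every non-degenerate case, while the degenerate case is handled by direct calculation using the cross (C$4'$).
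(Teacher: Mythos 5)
Your skeleton is the paper's own: the block decomposition of $\tilde B$ is mutation-invariant, the forward direction is immediate from Theorem \ref{exchange}, the same-block instances of (C$2'$), (C$3'$), (C$4'$) come from blockwise compatibility applied to each $A_{q,I_i}$, and the cross-block instance of (C$4'$) is propagated to every seed by Lemma \ref{zero}. All of that is sound.

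The step you yourself flag as the main obstacle is where the argument breaks, and your proposed fix is circular. Remark \ref{b2} is a corollary of Lemma \ref{b}, which is proved under the standing hypothesis of Theorem \ref{pmm} that \emph{both} $\tilde X$ and $\mu_k(\tilde X)$ are log-canonical; it records conditions \emph{necessary} for that. In the backward direction you know only that $\tilde X$ is log-canonical and that the $I(k)$-part of $\mu_k(\tilde X)$ is log-canonical for the restricted bracket; whether $\{X_k',X_j\}$ is a cluster monomial for $j\notin I(k)$ is exactly what is at stake, so Remark \ref{b2} cannot be invoked to control $\omega_{uj}$ for such $j$. The difficulty is not cosmetic: $\tilde B^\top\Lambda=(D\;O)$ only forces $\sum_t b_{tk}\lambda_{tj}=0$, so cross-block entries $\lambda_{uj}$ with $u\in I(k)$, $j\notin I(k)$ may well be nonzero, and the corresponding $\omega_{uj}$ then enter (C$2'$)/(C$3'$) while being invisible both to blockwise compatibility and to the cross equation $\sum_{\lambda_{tj}=0}\omega_{tj}b_{tk}=0$, which only sees indices with $\lambda_{tj}=0$. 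Concretely, one can arrange a seed with $b_{uk}=b_{vk}=1$, $\lambda_{uj}=-\lambda_{vj}=1$ and $\lambda_{kj}=0$ for some $j\notin I(k)$: the cross equation for this pair $(k,j)$ is then vacuous, yet (C$2'$) forces $\omega_{kj}=0$ and (C$3'$) forces $\omega_{uj}=-\omega_{vj}$, so these constraints are genuinely additional. The cross instances of (C$2'$)/(C$3'$) come for free only when $\Lambda$ is block-diagonal along the decomposition of $\tilde B$ (e.g.\ the coefficient-free case), since then both sides vanish under the paper's convention $\frac{r}{s}=\frac{u}{v}\Leftrightarrow rv=su$. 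For what it is worth, the paper's own proof does not close this step either --- it asserts without argument that the first two conditions ``always hold'' because of the decomposition --- so your proposal at least makes the missing point visible, but the route you sketch for it would not go through as written.
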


\begin{proof}
If $\left\{-,-\right\}$ is compatible with $A_{q}$, it follows from the definition of compatibility that $\left\{-,-\right\}|_{A_{q,I_{i}}}$ is compatible with $A_{q,I_{i}}$. And Lemma \ref{exchange} requests that $\sum\limits_{\lambda_{tj}=0}\omega_{tj}b_{tk}=0$ for any $k\in [1,n],j\neq k$. In particular, equation holds when $j\notin I(k)$.\par
On the other hand, we learn from above discussion that $\left\{-,-\right\}$ being compatible with $A_{q}$ is equivalent to the three conditions of Lemma \ref{exchange} holding for every seed. And Lemma \ref{zero} transforms the third condition to $\sum\limits_{\lambda_{tj}=0}\omega_{tj}b_{tk}=cd_{k}\delta_{jk}$ for any $j\in[1,m],k\in[1,n]$. Because of the decomposition of $A_{q}$, we know from the compatibility of $\left\{-,-\right\}|_{A_{q,I_{i}}}$ and $A_{q,I_{i}}$, $i\in[1,r]$ that the first two conditions as well as $\sum\limits_{\lambda_{tj}=0}\omega_{tj}b_{tk}=cd_{k}\delta_{jk},j\in I(k)$ always hold. The remaining are $\sum\limits_{\lambda_{tj}=0}\omega_{tj}b_{tk}=0$ for all $j\notin I(k)$.
\end{proof}

Let $A_{q,I_{1}}, A_{q,I_{2}}$ be two quantum cluster algebras with initial seeds $(\tilde{X}_{I_{1}},\tilde{B}_{I_{1}},\Lambda_{I_{1}})$ and $(\tilde{X}_{I_{2}},\tilde{B}_{I_{2}},\Lambda_{I_{2}})$ respectively, and let $\Theta$ be an $|I_{1}|\times|I_{2}|$ integer matrix satisfying
\begin{equation}\label{condition formula for cluster decomposition}
\left\{\begin{array}{cc}
\tilde{B}_{I_{1}}^{\top}\Theta & =0 \\
\Theta\tilde{B}_{I_{2}} & =0.
\end{array}
\right.
\end{equation}
Define $A_{q,I_{1}}\bigsqcup_{\Theta} A_{q,I_{2}}$ to be the algebra equal to $A_{q,I_{1}}\bigotimes_{\Z[q^{\pm\frac{1}{2}}]} A_{q,I_{2}}$ as a $\Z[q^{\pm\frac{1}{2}}]$-module
with twist multiplication:
\begin{equation}\label{twist}
(a\otimes b)(c\otimes d)=\sum\limits_{i,j}k_{i}l_{j}q^{\frac{1}{2}\bar{r}_{i}^{\top}\Theta \bar{s}_{j}}a\tilde{X}_{I_{1}}^{\bar{s}_{j}}\otimes\tilde{X}_{I_{2}}^{\bar{r}_{i}}d
\end{equation}
for $b=\sum\limits_{i}k_{i}\tilde{X}_{I_{2}}^{\bar{r}_{i}},c=\sum\limits_{j}l_{j}\tilde{X}_{I_{1}}^{\bar{s}_{j}}$, where $\bar{r}_{i},\bar{s}_{j}$ are exponential column vectors.

Let $A_{q,I_{i}}$ be quantum cluster algebras with initial seeds $(\tilde{X}_{I_{i}},\tilde{B}_{I_{i}},\Lambda_{I_{i}})$ for $i\in [1,r]$. Then we can obtain the algebra $\bigsqcup\limits_{i=1}^r A_{q,I_{i}}$ since there is the associativity for multiplication in the sense that
\begin{center}
$(A_{q,I_{1}}\bigsqcup_{\Theta_{1}} A_{q,I_{2}})\bigsqcup_{\Theta^{\prime}} A_{q,I_{3}}=A_{q,I_{1}}\bigsqcup_{\Theta^{\prime\prime}} (A_{q,I_{2}}\bigsqcup_{\Theta_{3}}A_{q,I_{3}})$,
\end{center}
where
\[\Theta^{\prime}=\begin{pmatrix}
\Theta_{2} \\
\Theta_{3}
\end{pmatrix},\quad
\Theta^{\prime\prime}=\begin{pmatrix}
\Theta_{1} & \Theta_{2}
\end{pmatrix}.\]
\begin{Proposition}
Following the above notations, the algebra $A_q=\bigsqcup\limits_{i=1}^r A_{q,I_{i}}$ is a quantum cluster algebra with quantum cluster subalgebras $ A_{q,I_{i}}$ for $i\in [1,r]$.
\end{Proposition}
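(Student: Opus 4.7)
The plan is to exhibit an explicit initial quantum seed for $A_q = \bigsqcup_{i=1}^{r} A_{q,I_i}$ and then check step by step that the twisted multiplication in (\ref{twist}) agrees with the quantum torus relation (\ref{1}) for that seed, that the combined exchange/deformation matrices form a compatible pair, and that everything is preserved under mutation. First I would take the concatenated extended cluster $\tilde X = \tilde X_{I_1}\cup\cdots\cup \tilde X_{I_r}$, the block-diagonal exchange matrix $\tilde B = \tilde B_{I_1}\oplus\cdots\oplus\tilde B_{I_r}$, and the deformation matrix $\Lambda$ whose diagonal blocks are the $\Lambda_{I_i}$ and whose off-diagonal blocks are read off from the twist matrices $\Theta_{ij}$ (placed so that $\Lambda$ is skew-symmetric). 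The assumptions (\ref{condition formula for cluster decomposition}) on $\Theta$ are exactly what is needed to make the off-diagonal blocks of $\tilde B^{\top}\Lambda$ vanish, so that $\tilde B^{\top}\Lambda=(D\ O)$ block-by-block, i.e.\ $(\tilde B,\Lambda)$ is a compatible pair of the required form.

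Next I would check that, under the natural identification $a\otimes b\leftrightarrow ab$ of the tensor factors with their images in $A_q$, the twist rule (\ref{twist}) is precisely the commutation law $X^{e}X^{f}=q^{\frac{1}{2}\Lambda(e,f)}X^{e+f}$ for this global $\Lambda$. Inside each factor the relation is exactly (\ref{1}) for $A_{q,I_i}$; between two factors $I_1$ and $I_2$ the exponent $\bar r^{\top}\Theta\bar s$ appearing in (\ref{twist}) matches $\Lambda(\bar s, \bar r)$ read off from the off-diagonal block. Thus the quantum torus $T$ at the initial vertex is the one determined by $\Lambda$, and the skew-field of fractions $\mathscr F_q$ in which the mutations live is well-defined.

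Then I would verify that mutations at any $k\in[1,n]$ act compatibly with the decomposition. Since $\tilde B$ is block-diagonal, mutation at $k\in I(k)$ modifies only the $I(k)$-block of $\tilde B$, and by (\ref{lambdamutation}) it modifies $\Lambda$ only in the rows/columns indexed by $k$. Reading off the change block-by-block, the diagonal block $\Lambda_{I(k)}$ mutates as in $A_{q,I(k)}$ and the off-diagonal blocks $\Theta_{I(k),I_j}$ mutate by exactly the same formula. Because mutation preserves compatible pairs, $(\tilde B',\Lambda')$ is still compatible, so by reading off blocks again the mutated $\Theta'$ still satisfies the analogues of (\ref{condition formula for cluster decomposition}). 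Hence the whole construction is stable under mutation: at every vertex of $\mathbb T_n$ the induced seed is again of the form $\bigsqcup_{i}(\tilde X'_{I_i},\tilde B'_{I_i},\Lambda'_{I_i})$ twisted by some $\Theta'_{ij}$, and each of the inclusions $A_{q,I_i}\hookrightarrow A_q$ sends every cluster variable of $A_{q,I_i}$ to a cluster variable of $A_q$, so $A_{q,I_i}$ sits in $A_q$ as a quantum cluster subalgebra.

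The main obstacle I expect is not the existence of the initial seed but the verification that the twisted multiplication (\ref{twist}), defined \emph{a priori} only by the initial data, is consistent with the quantum torus relations at every mutated seed; equivalently, that mutating from one vertex to another produces the same skew-field element regardless of whether one performs the mutation inside $A_{q,I(k)}$ and then embeds, or one mutates the global seed of $A_q$. This boils down to showing that the exchange binomial $X^{-e_k+[b_k]_+}+X^{-e_k+[-b_k]_+}$ for the global seed equals (under the twist) the corresponding exchange binomial for $A_{q,I(k)}$ times a unit, which follows because all exponents in $[b_k]_+$ and $[-b_k]_+$ are supported in $I(k)$ by the block-diagonality of $\tilde B$; and the twists between $I(k)$ and the other $I_j$ only contribute commutation factors that disappear once the identification is made. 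This is the computation I would have to carry out carefully, using the mutation stability of the $\Theta$-compatibility conditions established above.
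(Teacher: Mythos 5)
Your proposal is correct and follows essentially the same route as the paper: you build the same block seed with $\tilde B=\bigoplus_i\tilde B_{I_i}$ and $\Lambda$ having diagonal blocks $\Lambda_{I_i}$ and off-diagonal blocks $\Theta$, deduce compatibility from (\ref{condition formula for cluster decomposition}), and identify $a\otimes b$ with $ab$. The paper dismisses the remaining verifications (that the twist reproduces the global quantum torus relations and that everything is stable under mutation) with the word ``clearly,'' whereas you spell them out; this is additional care rather than a different argument.
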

\begin{proof}
Let $A_{q,I_{1}\bigcup I_{2}}$ be a quantum cluster algebra with initial seed $(\tilde{X}_{I_{1}\bigcup I_{2}},\tilde{B}_{I_{1}\bigcup I_{2}},\Lambda_{I_{1}\bigcup I_{2}})$, where
\[\tilde{B}_{I_{1}\bigcup I_{2}}=\begin{pmatrix}
\tilde{B}_{I_{1}} & O \\
O & \tilde{B}_{I_{2}}
\end{pmatrix},\;\;
\Lambda_{I_{1}\bigcup I_{2}}=\begin{pmatrix}
\Lambda_{I_{1}} & \Theta \\
-\Theta^{\top} & \Lambda_{I_{2}}
\end{pmatrix},\]
$(\tilde{B}_{I_{1}\bigcup I_{2}},\Lambda_{I_{1}\bigcup I_{2}})$ is compatible because of (\ref{condition formula for cluster decomposition}). Then clearly there is an isomorphism
\begin{center}
$\eta:\quad A_{q,I_{1}}\bigsqcup_{\Theta} A_{q,I_{2}}\longrightarrow A_{q,I_{1}\bigcup I_{2}}$
\end{center}
via sending $a\otimes b$ to $ab$. Moreover by induction on $r$, we have $A_{q}=\bigsqcup\limits_{i=1}^r A_{q,I_{i}}\cong A_{q,\bigcup\limits_{i}I_{i}}$ as a quantum cluster algebra.
\end{proof}

From this proposition, we call $A_q=\bigsqcup\limits_{i=1}^r A_{q,I_{i}}$ a {\bf cluster decomposition} of $A_{q,\bigcup\limits_{i}I_{i}}$.

Because of the Proposition \ref{decoposition to pieces}, a Poisson structure $\{-,-\}$ is compatible with $A_{q}$ if and only if $\{-,-\}\mid_{A_{q,I_{i}}}$ is compatible with $A_{q,I_{i}}$ for any $i$ and $\sum\limits_{\lambda_{tj}=0}\omega_{tj}b_{tk}=0$ for any $k\in[1,n],j\notin I(k)$. Since the latter one is easy to be checked, in the sequel we only need to consider a cluster indecomposable quantum cluster algebra $A_{q}$.

\begin{Theorem}\label{cluster decomposition}
Let $A_{q}$ be a quantum cluster algebra with initial seed $(\tilde{X},\tilde{B},\Lambda)$ and $\left\{-,-\right\}$ a compatible Poisson bracket on $A_q$. Assume $\Omega$ is the Poisson matrix of the initial cluster with respect to $\{-,-\}$, $\tilde{B}$ has the decomposition $\tilde{B}=\bigoplus\limits_{i=1}^{r}\tilde{B}_{I_{i}}$ with indecomposables $\tilde{B}_{I_{i}}$ for $i\in[1,r]$, and $A_{q,I_{i}}$ is the quantum cluster indecomposable subalgebra of $A_q$ determined by $(\tilde{B}_{I_{i}},\Lambda_{I_{i}})$. Then,

(i)\; $A_{q}$ has a cluster decomposition $A_{q}\cong\bigsqcup\limits_{i} A_{q,I_{i}}$;

(ii)\;Assume $W$ is the second deformation matrix of the second quantization of $A_q$ at the initial quantum seed induced by $\left\{-,-\right\}$. Then the second quantization of $A_q$ is non-trivial if and only if at least one of the following statements holds:

(1)\; there is at least one $i\in [1,r]$ such that the second quantization of $A_{q,I_{i}}$ is non-trivial;

(2)\; there is $k\in\mathbb{Z}$ and $i,j\in[1,r]$ such that $W|_{I_{i}\times I_{i}}= k\Omega|_{I_{i}\times I_{i}}$ but $W|_{I_{i}\times I_{j}}\neq k\Omega|_{I_{i}\times I_{j}}$, .
\end{Theorem}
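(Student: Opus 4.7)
I split the proof into parts (i) and (ii). Part (i) is a direct application of the preceding proposition, once one checks that the block decomposition of $\Lambda$ induced by that of $\tilde{B}$ satisfies the cross-compatibility relations (\ref{condition formula for cluster decomposition}). I write $\Lambda$ as an $r\times r$ block matrix $(\Lambda_{ij})$ with $\Lambda_{ij}=\Lambda|_{I_{i}\times I_{j}}$, and use $\tilde{B}=\bigoplus_{i}\tilde{B}_{I_{i}}$ to turn the compatibility $\tilde{B}^{\top}\Lambda=(D\;\;O)$ into a block equation whose off-diagonal blocks force $\tilde{B}_{I_{i}}^{\top}\Lambda_{ij}=0$ for every $i\neq j$. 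The skew-symmetry $\Lambda^{\top}=-\Lambda$, together with transposing this identity, then yields $\Lambda_{ij}\tilde{B}_{I_{j}}=0$. Setting $\Theta:=\Lambda_{ij}$, these are precisely (\ref{condition formula for cluster decomposition}); applying the preceding proposition inductively in $r$ gives $A_{q}\cong\bigsqcup_{i}A_{q,I_{i}}$.

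For part (ii), recall that $A_{p,q}$ is trivial iff there exists a partition $\{J_{\alpha}\}$ of $[1,m]$ along which both $W$ and $\Lambda$ are block-diagonal and satisfy $W|_{J_{\alpha}}=a_{\alpha}\Lambda|_{J_{\alpha}}$ for some $a_{\alpha}\in\Z$. I would work by contrapositive: assume neither (1) nor (2) holds and construct such a partition. The negation of (1) provides, on each $I_{i}$, a sub-partition on which triviality of $A_{q,I_{i}}$ is witnessed by scalars $a_{i}$. The negation of (2) guarantees that these scalars remain compatible across any pair $(I_{i},I_{j})$ on which the cross blocks of $W$ are nonzero, so the $I_{i}$-local sub-partitions can be assembled (by merging the $I_{i}$'s with matching scalars) into a global partition witnessing triviality of $A_{p,q}$. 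Conversely, either (1) or (2) individually obstructs this assembly: (1) prevents any internal sub-partition inside some $I_{i}$, while (2) produces two cluster blocks whose required scalars are incompatible.

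The main obstacle will be the cross-block bookkeeping in part (ii). The tension is that triviality on each block is phrased in terms of $W$ and $\Lambda$, while (2) is stated in terms of $W$ and $\Omega$. To bridge the two, I plan to rely on (\ref{IImatrix}) to freely convert between $\Omega$, $W$ and $\Lambda$: once the scalar $a_{i}$ is fixed on $I_{i}$, (\ref{IImatrix}) determines precisely what relation $\Omega|_{I_{i}\times I_{j}}$ must satisfy in order to propagate $a_{i}$ to the cross block, and a mismatch in this relation is exactly what the inequality $W|_{I_{i}\times I_{j}}\neq k\Omega|_{I_{i}\times I_{j}}$ in (2) records. Once this translation is made precise, combining it with the cluster decomposition from part (i) delivers the iff.
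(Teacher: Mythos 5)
Your part (i) is correct and is essentially the paper's argument: the paper simply sets $\Theta_{i}=\Lambda\mid_{(\bigcup_{j\leqslant i}I_{j})\times I_{i+1}}$ and says the twisted product is well defined ``because $(\tilde{B},\Lambda)$ is compatible''; your block computation of $\tilde{B}^{\top}\Lambda=(D\;O)$ combined with skew-symmetry is exactly the verification of (\ref{condition formula for cluster decomposition}) that this remark hides.

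Part (ii) is where the gap is, and it sits precisely at the step you defer. The paper's own proof of (ii) is a two-sentence assertion (``the only two possible cases are\ldots''), and your contrapositive/assembly plan is the same strategy; but the bridge you postpone --- converting between the $\Omega$-based condition (2) and the $\Lambda$-based definition of triviality (\ref{mb}) via (\ref{IImatrix}) --- is not bookkeeping; it is the whole content of the statement. Concretely: triviality requires $W|_{I_{i}\times I_{j}}=a\,\Lambda|_{I_{i}\times I_{j}}$ on every cross block that survives inside a common block of the partition, whereas (2) compares $W|_{I_{i}\times I_{j}}$ with $\Omega|_{I_{i}\times I_{j}}$. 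By (\ref{IImatrix}), on any entry with $\lambda_{uv}=0$ one has $W_{uv}=\omega_{uv}$, so there the relation $W_{uv}=k\omega_{uv}$ degenerates to $(1-k)W_{uv}=0$ and carries no information when $k=1$, while the triviality criterion at that entry demands $W_{uv}=0$; hence the negation of (2) does not, as written, force the cross blocks to vanish or to be proportional to $\Lambda$, and your merging procedure stalls. A second, independent problem: the negation of (2) is vacuous for every $i$ for which no integer $k$ with $W|_{I_{i}\times I_{i}}=k\Omega|_{I_{i}\times I_{i}}$ exists --- which happens, for instance, whenever the triviality of $A_{q,I_{i}}$ is witnessed only by a proper sub-partition of $I_{i}$ with distinct scalars, or simply because $W_{uv}=a\lambda_{uv}$ and $\omega_{uv}=a[\lambda_{uv}]_{q^{\frac{1}{2}}}$ are not integer multiples of one another once some $|\lambda_{uv}|\geqslant 2$ --- and then your argument has no control whatsoever over the cross blocks attached to that $I_{i}$. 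You need either to prove that these degenerate cases cannot occur under the standing hypotheses (using (C2)--(C4) and Proposition \ref{BW=D}), or to carry out the assembly with $\Lambda$ in place of $\Omega$ in condition (2), which is what the definition of triviality actually involves. To be fair, the paper's proof is silent on exactly the same points.
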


\begin{proof}
(i)\; Denote $\Theta_{i}=\Lambda\mid_{(\bigcup\limits_{j\leqslant i}I_{j})\times I_{i+1}}$ for $i\in[1,r-1]$. Because $(\tilde{B},\Lambda)$ is compatible,
\begin{center}
$A_{q,I_{1}}\bigsqcup_{\Theta_{1}}\cdots\bigsqcup_{\Theta_{r-1}}A_{q,I_{r}}$
\end{center}
is well defined. And $\eta$ in previous proof shows an isomorphism between $A_{q}$ and $\bigsqcup\limits_{i} A_{q,I_{i}}$.

(ii)\; Recall that the second quantization induced by $\left\{-,-\right\}$ is trivial if there are matrix decompositions $W=\bigoplus\limits_{i=1}^{s}W_{i}$ and $\Lambda=\bigoplus\limits_{i=1}^{s}\Lambda_{i}$ with $W_{i}$ and $\Lambda_{i}$ indecomposable such that $W_{i}=a_{i}\Lambda_{i}$ for some $a_{i}\in\mathbb{Z}$. Hence the second quantization is non-trivial if above decomposition of $W$ and $\Lambda$ have different size or there is $i\in[1,s]$ such that $W_{i}\neq k\Lambda_{i}$ for any $k\in \mathbb Z$. The only two possible cases are that it induces non-trivial second quantization on $A_{q,I_{i}}$ for some $i$ or otherwise there is $i,j\in[1,r]$ such that $W|_{I_{i}\times I_{j}}\neq k\Omega|_{I_{i}\times I_{j}}$, where $k\in\mathbb{Z}$ satisfying $W|_{I_{i}\times I_{i}}= k\Omega|_{I_{i}\times I_{i}}$.
\end{proof}

\begin{Remark}
(1) When $A_{q}$ degenerates to a cluster algebra $A$ as $q\rightarrow 1$, $(a\otimes b)(c\otimes d)=ac\otimes bd$.

(2) When $A_q$ is a quantum cluster algebra without coefficients, $B$ is invertible. Hence by (\ref{condition formula for cluster decomposition}), $\Theta=O$. So we also have $(a\otimes b)(c\otimes d)=ac\otimes bd$.

Then in these cases (1) and (2), $\bigsqcup$ is exactly the tensor product $\otimes$ and the above cluster decomposition coincides with $A=\bigotimes\limits_{i}A_{I_{i}}$ or $A_q=\bigotimes\limits_{i}A_{q, I_{i}}$.
\end{Remark}

\subsection{Quantum cluster algebras with non-trivial second quantization}

\subsubsection{The non-trivial example from a quantum algebra}.

Firstly, we give a simple example of a quantum cluster algebra which has a non-trivial second quantization.

The quantum coordinate algebra (or say, quantum matrix algebra) $Fun_{\C}(SL_{q}(2))$ (see \cite{M,Li}) is generated by $a, b, c, d$ with relations:
\begin{equation}\label{sl2a}
ab=q^{-1}ba,~ac=q^{-1}ca,~db=qbd,~dc=qcd,~bc=cb,ad-da=(q^{-1}-q)bc
\end{equation}
and
\begin{equation}\label{sl2b}
ad-q^{-1}bc=1,
\end{equation}
where $0\ne q\in\mathbb{C}$ is a parameter.

In fact, $Fun_{\C}(SL_{q}(2))$ has a quantum cluster structure, see \cite{LH}. Below is our explanation.

Let $\mathbb{P}=\mathbb{C}[b,c]$. Here $b$ commutes with $c$ by (\ref{sl2a}).
In the $1$-regular tree $T_1$: $\xymatrix@C=0.75cm{t_0\bullet \ar@{-}[r] & \bullet t_1}$, we assign the quantum seed $\Sigma(t_0)=(\widetilde{X}(t_0),\widetilde{B(t_0)},\Lambda(t_0))$ on the vertex $t_{0}$, where $X(t_0)=\{a\}$, $X_{fr}=\{b,c\}$, $X_{t_0}^{e_1}=a$, $X_{t_0}^{e_2}=X^{e_2}=b$, $X_{t_0}^{e_3}=X^{e_3}=c$; $\Lambda(t_0)=\left(\begin{array}{ccc}0&-1&-1\\1&0&0\\1&0&0\end{array}\right),~\widetilde{B}(t_0)=\left(\begin{array}{c}0\\1\\1\end{array}\right)$. It can be verified that $\Lambda(t_0)^\top\widetilde{B}(t_0)=\left(\begin{array}{c}2\\0\\0\end{array}\right)$, i.e, $(\widetilde{B}(t_0), \Lambda(t_0))$ is a compatible pair.

Moreover, let $X(t_1)=\{d\}$, $X_{t_1}^{e_1}=d$. Then, according to (\ref{sl2b}), we have
\[d=q^{-1}a^{-1}bc+a^{-1}=X^{-e_1+[b_1(t_0)]_+}_{t_0}+X^{-e_1+[-b_1(t_0)]_+}_{t_0},\]
which exactly means the mutation of cluster variables, i.e. $\mu_1(X_{t_0}^{e_1}))=X_{t_1}^{e_1}$. And by mutation of matrices, we have $\Lambda(t_1)=\mu_1(\Lambda(t_0))=\left(\begin{array}{ccc}0&1&1\\-1&0&0\\-1&0&0\end{array}\right)$, $\widetilde{B}(t_1)=\mu_{1}(\widetilde{B}(t_0))=\left(\begin{array}{c}0\\-1\\-1\end{array}\right)$.

Following (\ref{sl2a}), the relations of quantum tori:
$$X_{*}^{e_i}X_{*}^{e_j}=q^{\frac{1}{2}\Lambda_{*}(e_i,e_j)}X^{e_i+e_j},\;\;\;\;\forall i,j=1,2,3, *=t_0, t_1$$
hold.

Therefore, through the above discussion, $Fun_{\C}(SL_{q}(2))$ can be realized as the $\mathbb{Z}[q^{\pm\frac{1}{2}}]\mathbb P$-quantum cluster algebra, i.e. $Fun_{\C}(SL_{q}(2))=A_q(\Sigma(t_0))$.

Now we can give the second quantization of $Fun_{\C}(SL_{q}(2))$ according to its compatible Poisson structures. By (C1*), we have $\tilde{B}(t_0)^{\top}W(t_0)=c(D\;O)$ for some $c\in\mathds{Z}[q^{\pm\frac{1}{2}}] $, then it can be induced that the second deformation matrix $W(t_{0})$ must be of the form
\begin{equation}\label{wmatrix}
W(t_{0})=\begin{pmatrix}
0 & -w_{1} & -w_{2} \\
w_{1} & 0 & 0 \\
w_{2} & 0 & 0
\end{pmatrix}
\end{equation}
where $w_{1}+w_{2}\neq 0$. Moreover, it can be checked easily that the matrix $W(t_{0})$ in (\ref{wmatrix}) satisfies (C2), (C3) and (C4). So, such matrix $W(t_{0})$ is what we need. Next, by mutation formula, we have $W(t_{1})=\begin{pmatrix}
0 & w_{1} & w_{2} \\
-w_{1} & 0 & 0 \\
-w_{2} & 0 & 0
\end{pmatrix}$.

According to definition, the second quantization induced by $(\widetilde{B}(t_0),\Lambda(t_0),W(t_0))$ is trivial if and only if $W(t_0)=h\Lambda(t_0)$ for some constant $h$, which means $w_{1}=w_{2}$. Therefore, when $w_{1}+w_{2}\neq 0$ and $w_{1}\neq w_{2}$, the obtained second quantization $A_{p,q}$ of $Fun_{\C}(SL_{q}(2))$ is non-trivial.

In this case, the relations of quantum tori are
$$X_{*}^{e_i}X_{*}^{e_j}=p^{\frac{1}{2}W_{*}(e_i,e_j)}q^{\frac{1}{2}\Lambda_{*}(e_i,e_j)}X^{e_i+e_j},\;\;\;\;\forall i,j=1,2,3, *=t_0, t_1.$$

Following these relations and $X_{t_0}^{e_1}=a,\; X_{t_0}^{e_2}=b,\; X_{t_0}^{e_3}=c,\; X_{t_1}^{e_1}=d$, the secondly quantized cluster algebra $A_{p,q}$ of $Fun_{\C}(SL_{q}(2))$ can be realized as the $\mathds{Z}[q^{\pm\frac{1}{2}}]$-algebra generated by $a,b,c,d$ satisfying the relations as follows:
\begin{equation*}
ab=r^{-1}ba,~ac=s^{-1}ca,~db=rbd,~dc=scd,~bc=cb,ad-da=[(rs)^{-\frac{1}{2}}-(rs)^{\frac{1}{2}}]bc
\end{equation*}
and
\begin{equation*}
ad-(rs)^{-\frac{1}{2}}bc=1,
\end{equation*}
where $r=p^{w_{1}}q,s=p^{w_{2}}q$. Now, also write $A_{p,q}$ as $A_{p,q}(SL(2))$.

\begin{Remark}
We know from \cite{JL} that the 2-parameters quantum coordinate algebra $Fun_{\mathds{C}}(GL_{r,s}(2))$ is generated by $t_{ij}$, $det_{r,s}^{\pm 1}$ with relations:
\[t_{11}t_{12}=r^{-1}t_{12}t_{11},\quad t_{11}t_{21}=st_{21}t_{11},\quad t_{21}t_{22}=r^{-1}t_{22}t_{21},\quad t_{12}t_{22}=st_{22}t_{12},\]
\[t_{12}t_{21}=rst_{21}t_{12},\quad t_{11}t_{22}-t_{22}t_{11}=(s-r)t_{21}t_{12},\quad det_{r,s}det_{r,s}^{-1}=det_{r,s}^{-1}det_{r,s}=1,\]
\[det_{r,s}t_{ij}=(rs)^{i-j}t_{ij}(det_{r,s}),\quad det_{r,s}=t_{11}t_{22}-st_{21}t_{12}=t_{22}t_{11}-rt_{21}t_{12}=t_{11}t_{22}-r^{-1}t_{12}t_{21}.\]

If we consider this $2$-parameters quantum algebra from $GL_{r,s}(2)$ to $SL_{r,s}(2)$, then we have $det_{r,s}=1$.
Replacing it into the relation $det_{r,s}t_{ij}=(rs)^{i-j}t_{ij}(det_{r,s})$, we get $r=s^{-1}$, that is, 2-parameter quantum algebra $Fun(GL_{r,s}(2))$ is degenerated into one parameter quantum algebra $Fun(GL_r(2))$. It means that this method of 2-parameters quantization $Fun(GL_{r,s}(2))$ of $Fun(GL_r(2))$ has no effect on the special quantum linear group $SL_r(2)$.

On the other hand, the second quantization $A_{p,q}(SL(2))$ in the above example is for the special quantum linear group $SL_q(2)$. So, we can say that the second quantization $A_{p,q}(SL(2))$ provides a way to realize two-parameters quantization of the special quantum linear group $SL_q(2)$, as a parallel supplement to the method of two parameters quantization of the general quantum group.
\end{Remark}

\subsubsection{Non-trivial secondly quantized cluster algebras via cluster extensions}.

In this part, we will present a class of quantum cluster algebras with non-trivial second quantization via cluster extensions.

\begin{Lemma}\label{exampleone}

For $m>l\geqslant n$ and $a\in\Z$, let
\begin{equation*}
\tilde{B}=
\begin{pmatrix}
B \\
L \\
\end{pmatrix}_{m\times n}\quad
\Lambda=
\begin{pmatrix}
\Lambda_{0} &O \\
O & O
\end{pmatrix}_{m\times m}\quad
W=
\begin{pmatrix}
a\Lambda_{0} & O \\
O & P
\end{pmatrix}_{m\times m}
\end{equation*}
In these matrices, the sizes of blocks are correspondent, $B$ is an $l\times n$ symmetrizable integer matrix with skew-symmetrizer $D$, $L$ is an integer matrix and $P$ is a non-zero skew-symmetric integer matrix. Assume $\tilde{B}^{\top}\Lambda=(D\text{ }O)$, $L^{\top}P=O$ and under any sequence of mutations, the sub-matrix $L$ of $\tilde{B}$ always maintains column sign coherent.

Then $(\tilde{B},\Lambda,W)$ is compatible and determines a quantum cluster algebra $A_{q}$. The corresponding second quantization $A_{p,q}$ of $A_q$ is non-trivial.
\end{Lemma}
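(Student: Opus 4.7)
\medskip

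\noindent\textbf{Proof proposal.} The plan is to establish the two assertions separately: first, that $(\tilde{B},\Lambda,W)$ is a compatible triple in the sense of Definition \ref{triple}, and second, that the resulting second quantization $A_{p,q}$ is non-trivial.

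For compatibility, I would first check conditions (C2), (C3), (C4) at the initial seed by direct computation exploiting the block structure. Since $\Lambda$ and $W$ vanish across the partition $\{[1,l],[l+1,m]\}$ (with the exception that $W|_{[l+1,m]\times[l+1,m]}=P$), the ratio conditions (C2) and (C3) reduce to the identities $W_{uj}\lambda_{kj}=W_{kj}\lambda_{uj}$ and $W_{uj}\lambda_{vj}=W_{vj}\lambda_{uj}$, both of which hold automatically: either both sides vanish by block separation, or both blocks are proportional with factor $a$. For (C4), I would split on whether $j\leqslant l$ or $j>l$. When $j\leqslant l$, any $t\leqslant l$ with $\lambda_{tj}=0$ forces $W_{tj}=a\lambda_{tj}=0$, while $W_{tj}=0$ automatically for $t>l$, so the sum vanishes. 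When $j>l$, one has $\lambda_{tj}=0$ for every $t$, and the surviving contribution equals an entry of $L^{\top}P$, which vanishes by hypothesis.

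The crux is to verify (C2)-(C4) at every seed in the mutation equivalence class. I would show that for any mutation $\mu_k$ with $k\in[1,n]\subseteq[1,l]$, the block structure of $\Lambda$ and $W$ is preserved, i.e.\ the mutated $\Lambda'$ still vanishes outside $[1,l]\times[1,l]$, the mutated $W'$ still decomposes as $a\Lambda_0'$ on the top-left block and $P$ unchanged on the bottom-right block, with all cross blocks zero. The only non-trivial check is that $W'_{kj}$ remains zero for $k\leqslant l<j$; a direct calculation using the mutation formula in Definition \ref{IIm} reduces this to the identity
\[
\sum_{l_{0}>l}[b_{l_{0}k}]_{+}\,P_{(l_{0}-l)(j-l)}=0,
\]
which follows because column sign coherence of $L$ forces the column-$k$ restriction of $[L]_{+}$ to equal either $L$ or $0$ on that column, while $L^{\top}P=O$ by hypothesis. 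Iterating this argument (using the assumption that sign coherence of $L$ persists under any mutation sequence), the block structure is preserved throughout, and the initial-seed verification of (C2)-(C4) applies unchanged at every seed.

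For non-triviality, recall that $A_{p,q}$ is trivial precisely when there exist simultaneous matrix decompositions $\Lambda=\bigoplus_{i=1}^{r}\Lambda_{i}$ and $W=\bigoplus_{i=1}^{r}W_{i}$ with $W_{i}=a_{i}\Lambda_{i}$ for some $a_{i}\in\Z$. Any such decomposition must refine the coarse partition $\{[1,l],[l+1,m]\}$, since both $\Lambda$ and $W$ already vanish across that partition. But the restriction to $[l+1,m]$ gives $\Lambda|_{[l+1,m]\times[l+1,m]}=O$ while $W|_{[l+1,m]\times[l+1,m]}=P\neq O$, so any sub-block in this region would demand $W_{i}=a_{i}\cdot O=O$, contradicting $P\neq O$. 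Therefore $A_{p,q}$ cannot be trivial. The main obstacle is the mutation-invariance step: without the column sign coherence of $L$, the cross term above need not vanish and the clean $\Lambda_{0}$-versus-$P$ block separation would be destroyed after a single mutation, so the sign coherence assumption is doing essential work.
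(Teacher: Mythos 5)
Your proposal is correct and follows essentially the same route as the paper: derive $[L]_{+}^{\top}P=O$ from column sign coherence, use it to show the block structure of $\Lambda$ and $W$ (with cross blocks zero and $P$ untouched) persists under every mutation, verify (C2)--(C4) blockwise, and deduce non-triviality from the fact that $\Lambda$ vanishes on the $[l+1,m]$ block while $W$ equals $P\neq O$ there. You in fact supply more detail than the paper's proof, which compresses the mutation-invariance and non-triviality steps into ``it can be verified'' and ``by definition.''
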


\begin{proof}
$\tilde{B}^{\top}W=a(B^{\top}\Lambda_{0}\quad L^{\top}P)=a(D\text{ }O)$. Because of the column sign coherence of $L$, $[L]_{+}^{\top}P=O$, which ensures the zero blocks of $W$ remain after any sequence of mutations. Hence it can be verified that for any sequences of mutations $\mu_{I}$, there is always
\[\mu_{I}(\Lambda)=\begin{pmatrix}
\mu_{I}(\Lambda_{0}) & O \\
O & O
\end{pmatrix},\quad
\mu_{I}(W)=\begin{pmatrix}
\mu_{I}(a\Lambda_{0}) & O \\
O & P
\end{pmatrix}.\]
Then according to the definition of the compatible triple, we see easily that $(\tilde{B},\Lambda,W)$ is compatible. And by definition the second quantization is non-trivial.
\end{proof}

As a concrete example if we let
\begin{equation*}
\tilde{B}=
\begin{pmatrix}
0 & 1 \\
-1 & 0 \\
1 & 0 \\
1 & 0 \\
1 & 0
\end{pmatrix},\qquad
\Lambda=
\begin{pmatrix}
0 & 1 & 0 & 0 & 0 \\
-1 & 0 & 0 & 0 & 0 \\
0 & 0 & 0 & 0 & 0 \\
0 & 0 & 0 & 0 & 0 \\
0 & 0 & 0 & 0 & 0
\end{pmatrix},\qquad
W=
\begin{pmatrix}
0 & a & 0 & 0 & 0 \\
-a & 0 & 0 & 0 & 0 \\
0 & 0 & 0 &-b & b \\
0 & 0 & b & 0 & -b \\
0 & 0 & -b & b & 0
\end{pmatrix},
\end{equation*}
then $(\tilde{B},\Lambda,W)$ is compatible for any $a,b\in\mathbb{Z}$ and it raises a non-trivial second quantization $A_{p,q}$.

For any quantum seed $\Sigma=(\tilde{X},\tilde{B},\Lambda)$ of a quantum cluster algebra $A_q$, we call a quantum seed $\Sigma'=(\tilde{X}^{\prime},\tilde{B}^{\prime},\Lambda^{\prime})$ a {\bf cluster extension} of $\Sigma$ if $m'>m$ and $\tilde{X}^{\prime}=\tilde{X}\bigcup\{X_{m+1},\cdots,X_{m^{\prime}}\}$ with $X_{m+1},\cdots,X_{m^{\prime}}$ as extra frozen variables, in which the first $m$ rows of $\tilde{B}^{\prime}$ is $\tilde{B}$ and $\Lambda^{\prime}=\begin{pmatrix}
\Lambda & O \\
O & O
\end{pmatrix}$.
The quantum cluster algebra $A_q$ associated to $\Sigma$ is a specialization of the quantum cluster algebra $A'_q$ associated to $\Sigma^{\prime}$ with $X_{m+1}=\cdots=X_{m^{\prime}}=1$. In this case, we also call the quantum cluster algebra $A'_q$ a {\bf cluster extension} of $A_q$.
\begin{Proposition}
Assume $A_{q}$ is a quantum cluster algebra with the cluster decomposition $A_{q}=\bigsqcup\limits_{i}^rA_{q,I_{i}}$ and $A_{q,I_{i}}^{\prime}$ is a cluster extension of $A_{q,I_{i}}$ respectively for $i=1,\cdots,r$. Then $\bigsqcup\limits_{i}^rA_{q,I_{i}}^{\prime}$ is a cluster extension of $A_{q}$.
\end{Proposition}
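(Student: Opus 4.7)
The plan is to build the claimed cluster extension explicitly by zero-padding the twist matrices and then to verify its seed has the required shape. First, using the cluster decomposition $A_q=\bigsqcup_{i=1}^r A_{q,I_i}$, I would record that the initial seed $(\tilde{X},\tilde{B},\Lambda)$ of $A_q$ has (after rearrangement of variables) $\tilde{B}=\bigoplus_i \tilde{B}_{I_i}$, with the off-diagonal blocks of $\Lambda$ being twist matrices $\Theta_{ij}$ satisfying
\[
\tilde{B}_{I_i}^{\top}\Theta_{ij}=0,\qquad \Theta_{ij}\tilde{B}_{I_j}=0,
\]
as enforced by (\ref{condition formula for cluster decomposition}). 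Similarly each cluster extension $A'_{q,I_i}$ has initial seed $(\tilde{X}'_{I_i},\tilde{B}'_{I_i},\Lambda'_{I_i})$ with $\tilde{B}'_{I_i}=\begin{pmatrix}\tilde{B}_{I_i}\\ L_i\end{pmatrix}$ for some integer block $L_i$ and $\Lambda'_{I_i}=\begin{pmatrix}\Lambda_{I_i}& O\\ O& O\end{pmatrix}$.

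Next, I would define candidate twists between the extended pieces by zero-padding,
\[
\Theta'_{ij}=\begin{pmatrix}\Theta_{ij}& O\\ O& O\end{pmatrix},
\]
of sizes $m'_i\times m'_j$, and verify via direct block multiplication that $(\tilde{B}'_{I_i})^{\top}\Theta'_{ij}=0$ and $\Theta'_{ij}\tilde{B}'_{I_j}=0$, which both reduce to the orthogonality of $\Theta_{ij}$ already available. Condition (\ref{condition formula for cluster decomposition}) therefore holds at each stage of the iterated twist product, so by the associativity observed before the preceding proposition the algebra $\bigsqcup_{i=1}^r A'_{q,I_i}=A'_{q,I_1}\bigsqcup_{\Theta'_1}\cdots\bigsqcup_{\Theta'_{r-1}}A'_{q,I_r}$ is well-defined. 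By that proposition it is isomorphic to the quantum cluster algebra with initial seed $(\tilde{X}_{\mathrm{ext}},\tilde{B}_{\mathrm{ext}},\Lambda_{\mathrm{ext}})$ where $\tilde{B}_{\mathrm{ext}}=\bigoplus_i \tilde{B}'_{I_i}$ and $\Lambda_{\mathrm{ext}}$ has $\Lambda'_{I_i}$ on the diagonal and $\Theta'_{ij}$ above the diagonal.

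The last step is to reorder the variables of $\tilde{X}_{\mathrm{ext}}$ so that the $m=\sum_i m_i$ original variables of $\tilde{X}$ come first and the newly added frozen variables from each $A'_{q,I_i}$ come last. By the block form of $\tilde{B}'_{I_i}$ the first $m$ rows of the reordered $\tilde{B}_{\mathrm{ext}}$ assemble into $\bigoplus_i \tilde{B}_{I_i}=\tilde{B}$, and by the zero-padding built into both $\Lambda'_{I_i}$ and $\Theta'_{ij}$ the reordered $\Lambda_{\mathrm{ext}}$ takes the block shape $\begin{pmatrix}\Lambda& O\\ O& O\end{pmatrix}$, recovering the original $\Lambda$ in its top-left corner from the diagonal blocks $\Lambda_{I_i}$ and off-diagonal blocks $\Theta_{ij}$. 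This is exactly the definition of a cluster extension of $(\tilde{X},\tilde{B},\Lambda)$.

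The only real task is the permutation bookkeeping that consistently separates, across all $r$ pieces, the original variables from the newly added frozen ones so that the block decomposition indexed by $i$ and the block decomposition into old-vs-new variables can be interchanged. No deeper obstruction appears, because every required identity reduces to block matrix multiplication against the zero-padding; the skew-symmetry of $\Lambda_{\mathrm{ext}}$ and the orthogonality conditions are preserved automatically by the padding construction.
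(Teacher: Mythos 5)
Your proposal is correct and follows essentially the same route as the paper, whose entire proof is the single sentence that the result ``can be verified by comparing initial $B$-matrices''; you simply carry out that comparison explicitly, including the zero-padded twist matrices $\Theta'_{ij}$, the preserved orthogonality conditions, and the reordering that exhibits $\Lambda_{\mathrm{ext}}$ in the form $\begin{pmatrix}\Lambda& O\\ O& O\end{pmatrix}$. Your write-up is in fact more complete than the paper's, since it also checks the $\Lambda$-condition in the definition of cluster extension and the well-definedness of the iterated twist product, both of which the paper leaves implicit.
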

\begin{proof}
This result can be verified by comparing initial $B$-matrices.
\end{proof}

So we can focus on the indecomposable case as follows.

\begin{Theorem}\label{ext}
Let $\Sigma=(\tilde{X},\tilde{B},\Lambda)$ be an arbitrary quantum seed of a quantum cluster indecomposable algebra $A_q$. Then there is a cluster extension $\Sigma^{\prime}=(\tilde{X}^{\prime},\tilde{B}^{\prime},\Lambda^{\prime})$ of $\Sigma$ such that the cluster extension $A'_q$ of $A_q$ admits a non-trivial second quantization $A'_{p,q}$.\footnote{We are grateful to Zhuoheng He for the discussion about the above matrix equation. }
\end{Theorem}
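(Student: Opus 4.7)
The plan is to invoke Lemma \ref{exampleone} with the cluster extension obtained by adjoining $2n$ extra frozen variables. Concretely, set
\[
\tilde{B}' = \begin{pmatrix} \tilde{B} \\ I_n \\ I_n \end{pmatrix},\quad \Lambda' = \begin{pmatrix} \Lambda & O \\ O & O \end{pmatrix},\quad W' = \begin{pmatrix} O & O \\ O & P \end{pmatrix},
\]
where $L := \begin{pmatrix} I_n \\ I_n \end{pmatrix}$ denotes the new frozen block (of size $2n \times n$) and $P$ is a non-zero skew-symmetric $2n \times 2n$ matrix to be specified. Compatibility of $(\tilde{B}',\Lambda')$ is immediate from $(\tilde{B}')^{\top}\Lambda' = (\tilde{B}^{\top}\Lambda,\ O) = (D,\ O)$. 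Since $A_q$ is cluster indecomposable one has $n \geq 2$, so the construction is non-vacuous.

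The first main step is to verify that $L$ remains column sign coherent under every sequence of mutations. Because the mutation rule for a frozen row depends only on the principal part $B$ of $\tilde{B}$ and on the row's own entries, the two copies of $I_n$ evolve identically and always coincide with a common $n \times n$ matrix $C$. Moreover, $C$ agrees with the $C$-matrix of the principal-coefficient extension of $B$, so column sign coherence of $C$---and hence of $L$---follows from the sign-coherence theorem for $C$-matrices in the skew-symmetrizable case.

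The second step produces $P$. Every column of $L$ has the form $\begin{pmatrix} c \\ c \end{pmatrix}$ with $c \in \mathbb{Z}^n$, so $\ker L^\top$ contains the rank-$n$ sublattice of vectors $\begin{pmatrix} u \\ -u \end{pmatrix}$. Picking linearly independent $u_1, u_2 \in \mathbb{Z}^n$ and setting $w_i = \begin{pmatrix} u_i \\ -u_i \end{pmatrix}$, the matrix $P := w_1 w_2^{\top} - w_2 w_1^{\top}$ is non-zero, skew-symmetric, and satisfies $L^\top P = 0$. Lemma \ref{exampleone} (applied with $a = 0$) then asserts that $(\tilde{B}',\Lambda',W')$ is a compatible triple defining a second quantization $A'_{p,q}$ of the cluster extension $A'_q$. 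For non-triviality, any common block-diagonal decomposition $\Lambda' = \bigoplus_i \Lambda'_i$, $W' = \bigoplus_i W'_i$ with $W'_i = a_i \Lambda'_i$ must refine the partition $([1,m],[m+1,m+2n])$ on the frozen half into parts $B_i$ with $P|_{B_i \times B_i} = 0$ and $P|_{B_i \times B_j} = 0$ for $i \neq j$, which forces $P = 0$---a contradiction.

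The main obstacle is Step 1, the column sign coherence of $L$, which hinges on the (deep) sign-coherence theorem for $C$-matrices; the remaining verifications amount to routine linear algebra and block-matrix bookkeeping.
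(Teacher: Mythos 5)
Your overall strategy is the same as the paper's: extend the seed by extra frozen rows that evolve as (copies of rows of) a $C$-matrix, invoke column sign coherence of $C$-matrices to guarantee the sign-coherence hypothesis of Lemma \ref{exampleone}, and then produce a non-zero skew-symmetric integer $P$ with $L^{\top}P=O$. Where the paper takes $L=\left(\begin{smallmatrix}C\\ C'\end{smallmatrix}\right)$ with $C'$ consisting of $s>n+1$ rows of $C$ and proves existence of $P$ by counting equations versus unknowns, you take two full copies of $I_n$ based at $\Sigma$ itself and write $P=w_1w_2^{\top}-w_2w_1^{\top}$ explicitly with $w_i=\left(\begin{smallmatrix}u_i\\ -u_i\end{smallmatrix}\right)$; that explicit rank-two construction is cleaner than the dimension count, and using $a=0$ in the lemma instead of $a=1$ is harmless.

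There is, however, one genuine gap: the assertion that cluster indecomposability of $A_q$ forces $n\geqslant 2$ is false. Indecomposability is a condition on the partition of the full index set $[1,m]$ and is perfectly consistent with $n=1$; indeed the paper's own running example $Fun_{\C}(SL_q(2))$ in \S 5.2.1 has $n=1$, $m=3$ and indecomposable $\tilde{B}$. For $n=1$ your $L=\left(\begin{smallmatrix}1\\ 1\end{smallmatrix}\right)$ has $\ker L^{\top}$ of rank one, so any $w_1,w_2$ of the form $\left(\begin{smallmatrix}u\\ -u\end{smallmatrix}\right)$ are proportional and your $P$ vanishes; the construction therefore fails exactly in this case. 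The repair is immediate and in the spirit of the paper's ``$s>n+1$'' condition: adjoin $r\geqslant 3$ copies of $I_n$ (or enough repeated rows), so that $\ker L^{\top}$ contains the rank-$n(r-1)\geqslant 2$ sublattice $\{(u_1,\dots,u_r):\sum_i u_i=0\}$ and two independent $w_1,w_2$ exist. With that adjustment, and with the redundant re-verification of non-triviality omitted (Lemma \ref{exampleone} already asserts it for $P\neq O$), your argument is correct.
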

\begin{proof}
Choose a fixed seed $\Sigma_{t_{0}}=(\tilde{X}_{t_{0}},\tilde{B}_{t_{0}},\Lambda_{0})$ in $A_{q}$. Let $\A$ be the cluster algebra with principal coefficients having $\begin{pmatrix}
B_{t_{0}} \\
I_{n}
\end{pmatrix}$ as initial exchange matrix. Then there is a seed in $\A$ having $\begin{pmatrix}
B \\
C
\end{pmatrix}$ as exchange matrix with principal part $B$. For any choice of the initial seed $\Sigma_{t_{0}}$ of $A_q$, there may be more than one $C$-matrices $C$ such that $\begin{pmatrix}
B \\
C
\end{pmatrix}$ is an extended exchange matrix of a seed of $\A$. We just choose arbitrary one.

Let $\Sigma'=(\tilde{X}^{\prime},\tilde{B}^{\prime},\Lambda^{\prime})$ be a cluster extension of $\Sigma$ such that $\tilde{B}^{\prime}=\begin{pmatrix} \tilde{B} \\
C\\
C^{\prime} \end{pmatrix}$,
where $C^{\prime}$ consists of some $s(>n+1)$ (maybe, repeated) rows from $C$.

Since $C^{\prime}$ consists of several rows from $C$, according to the definition of mutations of exchange matrices $\tilde B$, $\mu_k\cdots \mu_1(C')$ still consists of the corresponding rows of $\mu_k\cdots \mu_1(C)$ after any sequence of mutations $\mu_1, \cdots, \mu_k$. Therefore, $C^{\prime}$ inherits column sign coherence from $C$, and thus, $\begin{pmatrix}
C \\
C^{\prime}
\end{pmatrix}$
maintains column sign coherence after any sequence of mutations.

In order to apply Lemma \ref{exampleone}, we need that there is a non-zero skew-symmetric integer $(n+s)\times (n+s)$-matrix $P=(p_{ij})$ satisfying that
\begin{equation}\label{P}
\begin{pmatrix}
C \\
C^{\prime}
\end{pmatrix}^{\top}P=O_{n\times (n+s)}.
\end{equation}

Consider this matrix equation (\ref{P}) for $P$. We claim that it has a non-zero skew-symmetric matrix solution $P$.

Let $\bar p_{i}$ be the $i$-th column of $P$, that is, $P=(\bar p_{1} \; \cdots \; \bar p_{n+s})$. Then (\ref{P}) can be written as
\[\begin{pmatrix}
C \\
C^{\prime}
\end{pmatrix}^{\top}(\bar p_{1} \; \cdots \; \bar p_{n+s})=O,\]
which is equivalent to the system of linear equations
\begin{equation}\label{A}
\begin{pmatrix}
\begin{pmatrix}
C^{\top}&(C^{\prime})^{\top}
\end{pmatrix} & O & O \\
O & \ddots & O \\
O & O & \begin{pmatrix}
C^{\top}&(C^{\prime})^{\top}
\end{pmatrix}
\end{pmatrix}_{n(n+s)\times (n+s)^2}
\begin{pmatrix}
\bar p_{1} \\
\vdots \\
\bar p_{n+s}
\end{pmatrix}_{(n+s)^2\times 1}=O_{n(n+s)\times 1}
\end{equation}
This system contains $n(n+s)$ linear equations.

Additionally, in order to make $P$ a skew-symmetric matrix, we need also the following system of linear equations:
\begin{equation}\label{B}
\left\{\begin{array}{ccl}
p_{ii} & = 0,&\;\; \text{if }\; i=1,\dots, n+s; \\
p_{ij}+p_{ji} & = 0,&\;\; \text{if }\; i\not=j,\; \text{for}\; i,j=1,\dots, n+s.
\end{array}
\right.
\end{equation}
This system contains $(n+s)+\frac{1}{2}(n+s)(n+s-1)$ linear equations.

Combining (\ref{A}) with (\ref{B}), we obtain a system of $r$ linear equations, denoted as $(I)$, whose solution $\{p_{ij}:\; i,j=1,\dots,n+s\}$ always forms a skew-symmetric matrix $P=(p_{ij})$, where
$$r=n(n+s)+(n+s)+\frac{1}{2}(n+s)(n+s-1)=(n+s)(\frac{3}{2}n+\frac{1}{2}s+\frac{1}{2}).$$

It is easy to see that $(n+s)^2-r>0$ when $s>n+1$. Then, in the system $(I)$ of linear equations, the number of undetermined elements is always larger than the number of linear equations in this case. Moreover, note that the system $(I)$ is homogeneous. Hence, the system $(I)$ has non-zero solution, say $\{p^0_{ij}:\; i,j=1,\dots,n+s\}$. Thus, as a solution of the matrix equation (\ref{P}), we can obtain a non-zero skew-symmetric (integer) matrix $P_0=(p^0_{ij})$.

Then, by Lemma \ref{exampleone}, $\tilde{B}^{\prime},\; \Lambda^{\prime}$ and $W=
\begin{pmatrix}
\Lambda & O \\
O & P_{0}
\end{pmatrix}_{(m+n+s)\times (m+n+s)}$ are compatible and moreover, they induce a non-trivial second quantization of the quantum cluster algebra $A'_q$ associated to $\Sigma^{\prime}$.
\end{proof}

We call a quantum cluster algebra $A_q$ to be {\bf with almost principal coefficients} if its initial extended exchange matrix has the form
\[\tilde{B}_{t_0}=\begin{pmatrix}
B_{t_0} \\
I_{n} \\
J
\end{pmatrix},\]
where $J$ consists of some (maybe, repeated) rows from $I_{n}$.

Particularly, in the above theorem and its proof, when $\Sigma_{t_0}=(X_{t_0},B_{t_0},\Lambda_{t_0})$ is a quantum seed of a quantum cluster algebra $A^{o}_q$ without coefficients, let $\Sigma^{\prime}_{t_0}=(\tilde{X}^{\prime}_{t_0},\tilde{B}^{\prime}_{t_0},\Lambda^{\prime}_{t_0})$ be a cluster extension of $\Sigma_{t_0}$ such that $\tilde{B}^{\prime}_{t_0}=\begin{pmatrix} B_{t_0} \\
I_n \\
J
\end{pmatrix}_{m\times n}$,
where $J_{s\times n}$ ($s>n+1$) consists of some (maybe, repeated) rows from $I_n$. Then, the quantum cluster algebra $A_q'$ associated to $\Sigma^{\prime}_{t_0}$ is a quantum cluster algebra with almost principal coefficients.

As done in the proof of Theorem \ref{ext}, let $\Sigma^{\prime}=(\tilde{X}^{\prime},\tilde{B}^{\prime},\Lambda^{\prime})$ be any seed in $A_q'$ mutation equivalent to $\Sigma^{\prime}_{t_0}$ with $\tilde{B}^{\prime}=\begin{pmatrix} B \\
C \\
C^{\prime}
\end{pmatrix}$,
where $C$ is a $C$-matrix and $C^{\prime}_{s\times n}$ consists of some (maybe, repeated) rows from $C$.

Then by Theorem \ref{ext}, $A_{q}'$ admits non-trivial second quantization since $C^{\prime}$ has more than $n+1$ rows. In this situation, $m=n+n+s$. So, $s>n+1$ if and only if $m>3n+1$. Therefore we have the following corollary:

\begin{Corollary}\label{almost}
Any quantum cluster algebra $A'_q$ with almost principal coefficients having $m\times n$ extended exchange matrices always admits a non-trivial second quantization $A'_{p,q}$ when $m>3n+1$.
\end{Corollary}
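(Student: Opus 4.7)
The plan is to deduce this corollary as a direct specialization of Theorem \ref{ext}, so the main task is to match the hypothesis of that theorem to the almost principal coefficients setup and to verify the row count.

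First I would unpack the definition of a quantum cluster algebra with almost principal coefficients: the initial extended exchange matrix decomposes as
\[
\tilde{B}_{t_{0}} = \begin{pmatrix} B_{t_{0}} \\ I_{n} \\ J \end{pmatrix},
\]
where $J_{s\times n}$ is formed from (possibly repeated) rows of $I_{n}$. Since the total number of rows is $m = n + n + s = 2n+s$, the hypothesis $m > 3n+1$ is equivalent to $s > n+1$. This is exactly the regime in which the construction in the proof of Theorem \ref{ext} produces a non-trivial second quantization.

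Next I would identify the data. The block $I_{n}$ is the $C$-matrix of $A'_{q}$ at the initial seed $t_{0}$ (this is the defining property of principal coefficients), so under any mutation sequence it evolves to a $C$-matrix $C$. Because $J$ is formed from rows of $I_{n}$ and the matrix mutation rule (\ref{matrixmut}) acts row-wise on the coefficient part, the block $J$ evolves in parallel to become the same selection of rows of $C$, i.e.\ a matrix $C'$. Thus at any seed the extended exchange matrix has shape $\tilde{B}^{\prime} = \begin{pmatrix} B \\ C \\ C' \end{pmatrix}$ with $C'$ consisting of $s>n+1$ rows of $C$; in particular $\begin{pmatrix} C \\ C' \end{pmatrix}$ inherits column sign-coherence from the sign-coherence of $C$-matrices. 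This puts us precisely in the situation of Theorem \ref{ext}.

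Finally I would invoke the linear-algebra core of Theorem \ref{ext}: the matrix equation $\begin{pmatrix} C \\ C' \end{pmatrix}^{\top} P = O$ together with the skew-symmetry constraints on $P$ forms a homogeneous linear system with $(n+s)^{2}$ unknowns and $r = n(n+s) + (n+s) + \tfrac{1}{2}(n+s)(n+s-1) = (n+s)(\tfrac{3}{2}n + \tfrac{1}{2}s + \tfrac{1}{2})$ equations, and $(n+s)^{2} - r > 0$ precisely when $s > n+1$. Hence a non-zero rational solution exists, and clearing denominators gives a non-zero skew-symmetric integer matrix $P_{0}$. Applying Lemma \ref{exampleone} to $\tilde{B}^{\prime}$, $\Lambda^{\prime} = \begin{pmatrix} \Lambda & O \\ O & O\end{pmatrix}$, and $W = \begin{pmatrix} \Lambda & O \\ O & P_{0}\end{pmatrix}$ yields a compatible triple whose associated second quantization $A'_{p,q}$ is non-trivial.

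The only genuine subtlety—rather than a true obstacle—is confirming that the row-wise parallel mutation of $J$ alongside $I_{n}$ preserves the ``$C'$ is a selection of rows of $C$'' property at every vertex of $\mathbb{T}_{n}$, since this is what makes $\begin{pmatrix} C \\ C' \end{pmatrix}$ column sign-coherent under all mutations and thus lets Lemma \ref{exampleone} apply uniformly. Once this is noted, the corollary follows immediately from Theorem \ref{ext} with the row count $s = m - 2n > n+1$.
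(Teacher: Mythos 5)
Your proposal is correct and follows essentially the same route as the paper: unpack the almost-principal-coefficients form $\tilde{B}_{t_0}=\bigl(\begin{smallmatrix}B_{t_0}\\ I_n\\ J\end{smallmatrix}\bigr)$, observe that $m=2n+s$ so $m>3n+1$ is exactly $s>n+1$, note that $J$ mutates in parallel with the $C$-matrix block so that $\bigl(\begin{smallmatrix}C\\ C'\end{smallmatrix}\bigr)$ stays column sign-coherent, and then run the homogeneous-linear-system count from the proof of Theorem \ref{ext} together with Lemma \ref{exampleone}. The only (harmless) addition is your explicit remark about clearing denominators to get an integer $P_0$, which the paper leaves implicit.
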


\vspace{4mm}

\section{Second quantization of quantum cluster algebras without coefficients}
In the following assume $A_{q}$ is a quantum cluster algebra without coefficients and $(X,B,\Lambda)$ is the initial seed. In this case, $B$ and $\Lambda$ are both invertible. Suppose $B$ has decomposition $B=B_{1}\bigsqcup B_{2}\bigsqcup\cdots\bigsqcup B_{s}$, where $B_{r}$ is indecomposable. Let the set of indices of $B_{r}$ as submatrix be $I_{r}$ and $X(r)=\left\{X_{j}\mid j\in I_{r}\right\}$. As we said before, the coefficients-free condition indicates the invertibility of $B$, and thus the invertibility of $B_{r}$ for each $r$. Meanwhile, $B^{\top}\Lambda=D$. Hence $\Lambda$ has decomposition $\Lambda=\Lambda_{1}\bigsqcup\Lambda_{2}\bigsqcup\cdots\bigsqcup\Lambda_{s}$, where $\Lambda_{r}$ is indecomposable and the set of indices of $\Lambda_{r}$ is exactly $I_{r}$. Then $(X(r),B_{r},\Lambda_{r})$ is a seed from which we get a quantum cluster subalgebra $A_{q,I_{r}}$. Moreover, it follows from the decompositions of $B$ and $\Lambda$ the decomposition of quantum cluster algebras $A_{q}=A_{q,I_{1}}\bigotimes A_{q,I_{2}}\bigotimes\cdots\bigotimes A_{q,I_{s}}$.

\begin{Definition}
Let $\left\{-,-\right\}$ be a Poisson bracket on a quantum cluster algebra $A_{q}$. For a seed $\Sigma=(\tilde{X},\tilde B,\Lambda)$, let $B=B_{1}\bigsqcup\ldots\bigsqcup B_{s}$ with $B_{r}$ indecomposable. $\left\{-,-\right\}$ is a \textbf{locally standard Poisson structure} if $\{X_{i},X_{j}\}=0$ when $i$ and $j$ are from different $I_{r}$ and $\left\{-,-\right\}$ is of standard poisson structure on each $X(r)$, i.e, $\{X_{i},X_{j}\}=a_{r}[X_{i},X_{j}]$, where $i,j\in I_{r},a_{r}\in \mathbb{Z}[q^{\pm\frac{1}{2}}]$.
\end{Definition}

\begin{Theorem}\label{r2}
Let $A_{q}$ be a quantum cluster algebra without coefficients. Then a Poisson structure $\left\{-,-\right\}$ on $A_{q}$ is compatible with $A_{q}$ if and only if it is locally standard on $A_{q}$.
\end{Theorem}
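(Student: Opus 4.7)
The plan is to handle both directions using the fact that, in the absence of coefficients, $B$ and each indecomposable block $B_r$ are invertible square matrices; this rigidifies the set of compatible Poisson structures.

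For sufficiency, suppose $\{-,-\}$ is locally standard, with scalar $a_r$ on the block $X(r)$ and zero across blocks. The Leibniz and Jacobi identities are inherited from the commutator, so only log-canonicity of every cluster must be verified. Since $B$ is block-diagonal, every mutation-equivalent $\tilde B(t)$ shares the same block decomposition, and the standard bracket $a_r[-,-]$ on a quantum torus automatically produces log-canonical relations (the commutator of two Laurent monomials is itself a monomial times an element of $\mathbb{Z}[q^{\pm\frac12}]$). I will verify conditions $(C2'), (C3'), (C4')$ of Theorem \ref{exchange} directly for each cluster: $(C2')$ and $(C3')$ hold because within a block $\omega_{ij} = a_r(q^{\frac12\lambda_{ij}} - q^{-\frac12\lambda_{ij}})$ and $\omega_{ij}=0$ across blocks, while $(C4')$ is vacuous since any $t$ with $\lambda_{tj}=0$ has $\omega_{tj}=0$.

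For necessity, assume compatibility and first show vanishing of cross-block brackets. Fix distinct $r \neq r'$, $k \in I_r$, and $j \in I_{r'}$. Since $(B,\Lambda)$ is a compatible pair with $B$ invertible and block-diagonal, the relation $B^\top \Lambda = D$ forces $\Lambda$ to be block-diagonal as well, so $\lambda_{tj}=0$ for every $t \in I_r$. Condition $(C4')$ of Theorem \ref{exchange} then collapses to $\sum_{t \in I_r}\omega_{tj}b_{tk}=0$, i.e., $B_r^\top(\omega_{tj})_{t\in I_r}=0$; invertibility of $B_r$ yields $\omega_{tj}=0$ for all $t\in I_r$, hence $\omega_{ij}=0$ across blocks.

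It remains to show standardness within each block. By Proposition \ref{decoposition to pieces}, the restricted bracket on $A_{q,I_r}$ is itself compatible. Applying Proposition \ref{BW=D} to this indecomposable quantum cluster subalgebra gives $B_r^\top W_r = c_r D_r$ for some $c_r$; combining with the compatible-pair identity $B_r^\top\Lambda_r = D_r$ and invertibility of $B_r$, we conclude $W_r = c_r\Lambda_r$. Unpacking the definition (\ref{IImatrix}) of $W$, this gives $\omega_{ij}=c_r[\lambda_{ij}]_{q^{\frac12}}$ throughout $I_r$, which rewrites as $\{X_i,X_j\}=a_r[X_i,X_j]$ with $a_r=c_r/(q^{\frac12}-q^{-\frac12})$, exactly the standard bracket on $A_{q,I_r}$. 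The main obstacle, as I see it, is the cross-block vanishing step: one must simultaneously exploit the block-diagonality of $\Lambda$ (from the compatible-pair relation together with invertibility of $B$) and the invertibility of $B_r$ to reduce $(C4')$ to a homogeneous linear system that admits only the zero solution. Once this structural step is in place, the within-block analysis reduces to a two-line invocation of Proposition \ref{BW=D}.
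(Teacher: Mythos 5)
Your proof is correct and follows essentially the same route as the paper: the core of the necessity direction is Proposition \ref{BW=D} combined with the invertibility of $B$ (giving $W=c\Lambda$ on each indecomposable block, hence a standard bracket there), with the block structure handled via Proposition \ref{decoposition to pieces}. Your explicit derivation of the cross-block vanishing $\omega_{tj}=0$ from (C$4'$), the block-diagonality of $\Lambda$, and the invertibility of $B_r$ spells out a step the paper leaves implicit in its citation of Proposition \ref{decoposition to pieces}, but it is the same argument in substance.
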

\begin{proof}
``If":
The compatibility of a Poisson structure on $A_q$ can be verified directly using the definition of locally standard Poisson brackets.

``Only if": First let $A_{q}$ be indecomposable. As Proposition \ref{BW=D} says, $B^{\top}W=cD$, where $c\in\mathbb{Z}[q^{\pm\frac{1}{2}}]$ and $D$ is a skew-symmetrizer of $B$. So $W=c(B^{\top})^{-1}D=c\cdot a\Lambda$, where $a\lambda_{ij}\in\mathbb{Z}[q^{\pm\frac{1}{2}}]$ for any $i,j\in[1,m]$. According to the definition, $\omega_{ij}=c[\lambda_{ij}]_{q^{\frac{1}{2}}}$, i.e, \{$X_{i},X_{j}$\}=c[$X_{i},X_{j}$]. Therefore for any $\alpha_i,\beta_j\in \mathbb{Z}^{n}$, \{$\prod \limits_{i=1}^{n}X_{i}^{\alpha_{i}},\prod \limits_{j=1}^{n}X_{j}^{\beta_{j}}$\}=a[$\prod \limits_{i=1}^{n}X_{i}^{\alpha_{i}},\prod \limits_{j=1}^{n}X_{j}^{\beta_{j}}$], which means that $\left\{-,-\right\}$ is standard.

Then according to Proposition \ref{decoposition to pieces}, $\left\{-,-\right\}$ is locally standard for any quantum cluster algebra without coefficients.
\end{proof}
%
%
%

As showed in the last section, any compatible Poisson structure on $A_{q}$ are locally standard. In particular, it is standard when restricted on each quantum cluster subalgebra $A_{q}(i)$. Therefore without loss of generality, in the following we can assume $A_{q}$ is indecomposable with a standard Poisson structure. Then in a compatible triple, using the proof of Theorem \ref{r2}, we have:
\begin{equation*}
\Omega=a\begin{pmatrix}
0 & [\lambda_{12}]_{q^{\frac{1}{2}}} & \cdots & [\lambda_{1n}]_{q^{\frac{1}{2}}} \\
[\lambda_{21}]_{q^{\frac{1}{2}}} & 0 & \cdots & [\lambda_{2n}]_{q^{\frac{1}{2}}} \\
\vdots & \vdots & \ddots & \vdots \\
[\lambda_{n1}]_{q^{\frac{1}{2}}} & [\lambda_{n2}]_{q^{\frac{1}{2}}} & \cdots & 0
\end{pmatrix}.
\end{equation*}
where $a$ is an integer. And $W_{ij}=a\lambda_{ij}$ for any $i,j\in[1,n]$. Therefore in this case, from (\ref{second quantum exchange}) we obtain that
\begin{equation*}
Y_{t}^{e_{i}}Y_{t}^{e_{j}}=(p^{a}q)^{\frac{1}{2}\lambda_{ij}}Y_{t}^{e_{i}+e_{j}},\forall i,j\in[1,n].
\end{equation*}
So, the secondly quantized cluster algebra $A_{p,q}$ is essentially a quantum cluster algebra with one parameter.

In this aspect, it is showed above that:
\begin{Corollary}\label{trivial}
The second quantization of a quantum cluster algebra without coefficients is always trivial.
\end{Corollary}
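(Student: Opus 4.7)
The plan is to deduce this corollary directly from Theorem \ref{r2}, since that theorem already pins down compatible Poisson brackets on a coefficient-free $A_{q}$ as locally standard; what remains is a blockwise translation into the matrix condition defining triviality. First, I would invoke Theorem \ref{r2} to replace the hypothesis by the concrete form of the bracket: relative to the indecomposable block decomposition $B=\bigsqcup_{r=1}^{s}B_{r}$ and the induced $\Lambda=\bigoplus_{r=1}^{s}\Lambda_{r}$ and $A_{q}=\bigotimes_{r}A_{q,I_{r}}$, the bracket vanishes between distinct blocks and restricts on each $A_{q,I_{r}}$ to the standard form $a_{r}[-,-]$ for some scalar $a_{r}$.

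Next I would compute the second deformation matrix $W$ at the initial seed block by block from the defining formula (\ref{IImatrix}). For indices $i,j$ in different blocks both $\lambda_{ij}=0$ and $\omega_{ij}=0$, so $W_{ij}=0$ immediately. For $i,j$ inside one block $I_{r}$, the standard relation $\{X_{i},X_{j}\}=a_{r}(X_{i}X_{j}-X_{j}X_{i})=a_{r}(q^{\lambda_{ij}/2}-q^{-\lambda_{ij}/2})X^{e_{i}+e_{j}}$ shows that $\omega_{ij}$ is a common scalar multiple of $[\lambda_{ij}]_{q^{1/2}}$, so (\ref{IImatrix}) collapses to $W_{ij}=c_{r}\lambda_{ij}$ with a single scalar $c_{r}$ attached to the block. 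Assembling over $r$ yields matching matrix decompositions $W=\bigoplus_{r=1}^{s}c_{r}\Lambda_{r}$ and $\Lambda=\bigoplus_{r=1}^{s}\Lambda_{r}$, which is precisely the criterion for the second quantization of $A_{q}$ to be trivial as spelled out in the discussion following Proposition \ref{BW=D}. Since the block decomposition and the relation $W_{r}=c_{r}\Lambda_{r}$ are preserved under mutation (by Proposition \ref{decoposition to pieces} together with the mutation formulas for $\Lambda$ and $W$), this structure passes to every seed.

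The main subtle point, rather than a genuine obstacle, is the integrality step: Theorem \ref{r2} a priori only produces $a_{r}\in\mathbb{Z}[q^{\pm 1/2}]$, while triviality in the sense of Section 3 requires $c_{r}\in\mathbb{Z}$. The cancellation $\omega_{ij}\cdot\lambda_{ij}/[\lambda_{ij}]_{q^{1/2}}$ built into the definition of $W$ absorbs the $q$-dependence, and combined with the requirement that $W$ be an integer matrix on the nonzero block $\Lambda_{r}$, this forces $c_{r}\in\mathbb{Z}$ on each indecomposable piece, completing the reduction.
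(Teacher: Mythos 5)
Your proposal is correct and follows essentially the same route as the paper: invoke Theorem \ref{r2} to see that any compatible Poisson structure is locally standard, compute $W$ blockwise from (\ref{IImatrix}) to get $W=\bigoplus_{r}c_{r}\Lambda_{r}$ alongside $\Lambda=\bigoplus_{r}\Lambda_{r}$, and match this against the triviality criterion (\ref{mb}). Your explicit attention to the off-block vanishing and to the integrality of the scalars $c_{r}$ is slightly more careful than the paper's reduction to the indecomposable case, but it is a refinement of the same argument rather than a different one.
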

\vspace{4mm}

{\bf Acknowledgements:}\;

{\em This project is supported by the National Natural Science Foundation of China(No.11671350) and the Zhejiang Provincial Natural Science Foundation of China (No. LY19A010023).}

{\em Fang Li thanks to Naihuan Jing and Zhaobing Fan for their encouragement and discussion at the beginning of this work.}


\begin{thebibliography}{99}

\bibitem{BZ} A.Berenstein, A.Zelevinsky, Quantum cluster algebras. Advances in Mathematics, 195(2005): 405-455.

\bibitem{BFZ} A.Berenstein, S.Fomin, A.Zelevinsky, Cluster algebras. III. Upper bounds and double Bruhat cells. Duke Math. J., 126 (2005), 1-52.

\bibitem{FH} L.Fedele, D.Hernandez, Toroidal Grothendieck rings and cluster algebras, arXiv:1912.02004.

\bibitem{FZ1} S.Fomin and A.Zelevinsky, Cluster algebras. I. Foundations, J. Amer. Math. Soc. 15 (2002),
no. 2, 497-529.

\bibitem{FZ4} S.Fomin and A.Zelevinsky, Cluster algebras, IV. Coefficients. Compos. Math., 143 (2007), 112-164.

\bibitem{GSV} M.Gekhtman, M.Shapiro, A.Vainshtein, Cluster algebras and poisson geometry. Mathematical Surveys and Monographs Volume 167. American Mathematical Society Providence, Rhode Island, 2010.

\bibitem{GY} K.R.Goodearl and M.T.Yakimov, Quantum cluster algebra structures on quantum nilpotent algebras, Memoirs of the American Mathematical Society 247(2016), no. 1169, arXiv:1309.7869.

\bibitem{GHKK} M.Gross, P.Hacking, S.Keel and M.Kontsevich, Canonical bases for cluster algebras, J. Amer. Math. Soc. 31 (2018), 497-608.

\bibitem{JL}N.Jing and M.Liu, R-matrix realization of two-parameter quantum group $U_{r,s}(gl_{n})$, Commun. Math. Stat. 2 (2014), 211-230.

\bibitem{Li}F.Li, Elementary operations and Laplaces theorem on quantum matrices, J. Physics (A: Math. \& Gen.) 26(17): 4287-4297 (1993).
\bibitem{LH} F.Li and M.Huang, Introduction to cluster algebras (in Chinese), in preparation.
\bibitem{M} S.A.Merkulov, Quantum $m\times n$-matrices and q-deformed Binet-Cauchy formula, J. Physics (A: Math. \& Gen.) 24: 1243-1247 (1991).
\bibitem{N} T.Nakanishi, Difference equations and cluster algebras I: Poisson bracket for integrable difference equations. in Infinite Analysis 2010 - Developments in Quantum Integrable Systems, RIMS Kokyuroku Bessatsu, Vol. B28, Res. Inst. Math. Sci. (RIMS), Kyoto, 2011, 63-88, arXiv:1012.5574.
\end{thebibliography}
\end{document}